\date{}
\title{\vspace{-0.9cm}The minimum number of disjoint pairs in set systems and related problems}
\author{
Shagnik Das \thanks{Department of Mathematics, UCLA, Los Angeles, CA, 90095. Email: shagnik@ucla.edu.}
\and
Wenying Gan \thanks{Department of Mathematics, UCLA, Los
Angeles, CA, 90095. Email: wgan@math.ucla.edu.}
\and
Benny Sudakov \thanks{Department of Mathematics, UCLA, Los Angeles, CA 90095.
Email: bsudakov@math.ucla.edu. Research supported in part by NSF grant DMS-1101185, by AFOSR MURI grant FA9550-10-1-0569 and by a USA-Israel BSF grant. }
}
\theoremstyle{plain}
\newtheorem{THM}{Theorem}[section]
\newtheorem{PROP}[THM]{Proposition}
\newtheorem{LEMMA}[THM]{Lemma}
\newtheorem{COR}[THM]{Corollary}
\newtheorem{CLAIM}{Claim}
\newtheorem{CONJ}[THM]{Conjecture}
\newtheorem{QUE}[THM]{Question}
\theoremstyle{definition}
\newcommand{\floor}[1]{\left\lfloor #1 \right\rfloor}
\newcommand{\card}[1]{\left| #1 \right|}
\newcommand{\cA}{\mathcal{A}}
\newcommand{\cF}{\mathcal{F}}
\newcommand{\cG}{\mathcal{G}}
\newcommand{\cL}{\mathcal{L}}
\newcommand{\cX}{\mathcal{X}}
\newcommand{\disj}{\mathrm{dp}}
\newcommand{\qdisj}{\mathrm{dp}^{(q)}}
\newcommand{\tdisj}{\mathrm{dp}_t}
\newcommand{\tint}{\mathrm{int}_t}
\begin{document}
\maketitle

\begin{abstract}
Let $\cF$ be a set system on $[n]$ with all sets having $k$ elements and every pair of sets intersecting. The celebrated theorem of Erd\H{o}s-Ko-Rado from 1961 says that any such system has size at most $\binom{n-1}{k-1}$. A natural question, which was asked by Ahlswede in 1980, is how many disjoint pairs must appear in a set system of larger size. Except for the case $k=2$, solved by Ahlswede and Katona, this problem has remained open for the last three decades.

In this paper, we determine the minimum number of disjoint pairs in small $k$-uniform families, thus confirming a conjecture of Bollob\'as and Leader in these cases. Moreover, we obtain similar results for two well-known extensions of the Erd\H{o}s-Ko-Rado theorem, determining the minimum number of matchings of size $q$ and the minimum number of $t$-disjoint pairs that appear in set systems larger than the corresponding extremal bounds. In the latter case, this provides a partial solution to a problem of Kleitman and West.
\end{abstract}

\section{Introduction} \label{sec:intro}

A set system $\cF$ is said to be \emph{intersecting} if $F_1 \cap F_2 \neq \emptyset$ for all $F_1, F_2 \in \cF$.  The Erd\H{o}s-Ko-Rado Theorem is a classical result in extremal set theory, determining how large an intersecting $k$-uniform set system can be.  This gives rise to the natural question of how many disjoint pairs must appear in larger set systems.

We consider this problem, first asked by Ahslwede in 1980.  Given a $k$-uniform set system $\cF$ on $[n]$ with $s$ sets, how many disjoint pairs must $\cF$ contain?  We denote the minimum by $\disj(n,k,s)$, and determine its value for a range of system sizes $s$, thus confirming a conjecture of Bollob\'as and Leader in these cases.  This results in a quantitative strengthening of the Erd\H{o}s-Ko-Rado Theorem.  We also provide similar results regarding some well-known extensions of the Erd\H{o}s-Ko-Rado Theorem, which in particular allow us to partially resolve a problem of Kleitman and West.

We now discuss the Erd\H{o}s-Ko-Rado Theorem and the history of this problem in greater detail, before presenting our new results.

\subsection{Intersecting systems}

Extremal set theory is one of the most rapidly developing areas of combinatorics, having enjoyed tremendous growth in recent years.  The field is built on the study of very robust structures, which allow for numerous applications to other branches of mathematics and computer science, including discrete geometry, functional analysis, number theory and complexity.

One such structure that has attracted a great deal of attention over the years is the intersecting set system; that is, a collection $\cF$ of subsets of $[n]$ that is pairwise-intersecting.  The most fundamental question one may ask is how large such a system can be.  Observe that we must have $\card{\cF} \le 2^{n-1}$, since for every set $F \subset [n]$, we can have at most one of $F$ and $[n] \setminus F$ in $\cF$.  This bound is easily seen to be tight, and there are in fact numerous extremal systems.  For example, one could take the set system consisting of all sets containing some fixed $i \in [n]$.  Another construction is to take all sets $F \subset [n]$ of size $\card{F} > \frac{n}{2}$.  If $n$ is odd, this consists of precisely $2^{n-1}$ sets.  If $n$ is even, then we must add an intersecting system of sets of size $\frac{n}{2}$; for instance, $\{ F \subset [n]: \card{F} = \frac{n}{2}, 1 \in F \}$ would suffice.

\medskip

In some sense, having large sets makes it easier for the system to be intersecting.  This leads to the classic theorem of Erd\H{o}s-Ko-Rado \cite{ekr61}, a central result in extremal set theory, which bounds the size of an intersecting set system with all sets restricted to have size $k$.

\begin{THM}[Erd\H{o}s-Ko-Rado \cite{ekr61}, 1961] \label{thm:ekr}
If $n \ge 2k$, and $\cF \subset \binom{[n]}{k}$ is an intersecting set system, then $\card{\cF} \le \binom{n-1}{k-1}$.
\end{THM}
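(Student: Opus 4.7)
The plan is to use Katona's elegant cyclic permutation method, which yields the bound via a double-counting argument built on a local geometric lemma.

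First, I would consider the elements of $[n]$ arranged in a cyclic order $\sigma$ and call a set $F \in \binom{[n]}{k}$ a $\sigma$-arc if its elements occupy $k$ consecutive positions on the cycle. The key lemma asserts: whenever $n \ge 2k$, any intersecting family $\cF \subseteq \binom{[n]}{k}$ contains at most $k$ sets that are $\sigma$-arcs.

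To prove this lemma, I would fix one $\sigma$-arc $A \in \cF$ (otherwise the bound is trivial) and write $A = \{a, a+1, \ldots, a+k-1\}$ in cyclic coordinates. Any other $\sigma$-arc $B$ meeting $A$ either begins at one of the $k-1$ positions $a+1, \ldots, a+k-1$, or ends at one of the $k-1$ positions $a, \ldots, a+k-2$, giving $2(k-1)$ candidate arcs in total. The crucial observation is that for each $i \in \{1, \ldots, k-1\}$, the arc beginning at $a+i$ and the arc ending at $a+i-1$ together occupy $2k$ distinct positions, so the hypothesis $n \ge 2k$ guarantees they are disjoint. Since $\cF$ is intersecting, at most one arc from each such pair can lie in $\cF$, yielding at most $(k-1) + 1 = k$ $\sigma$-arcs in $\cF$.

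The second ingredient is a standard double count. I would count pairs $(\sigma, F)$ where $\sigma$ is an arrangement of $[n]$ (viewed cyclically) and $F \in \cF$ is a $\sigma$-arc. The lemma bounds this count from above by $k \cdot n!$, while each fixed $F \in \cF$ appears as a $\sigma$-arc in exactly $n \cdot k! \cdot (n-k)!$ arrangements (choose the starting position, then order the elements of $F$ inside the arc and the remaining elements outside). Hence
\[
\card{\cF} \cdot n \cdot k! \cdot (n-k)! \;\le\; k \cdot n!,
\]
which rearranges to $\card{\cF} \le \binom{n-1}{k-1}$. The main obstacle is the cycle lemma; once it is in hand the averaging is essentially mechanical, and the hypothesis $n \ge 2k$ enters precisely there to rule out wraparound overlaps between the paired candidate arcs.
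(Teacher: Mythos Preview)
Your argument is correct: this is Katona's classical cyclic-permutation proof, and the details you give (the pairing of the $2(k-1)$ candidate arcs into $k-1$ disjoint pairs, and the double count $\card{\cF}\cdot n\cdot k!\cdot(n-k)! \le k\cdot n!$) are all sound.

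However, there is no comparison to make with the paper's proof, because the paper does not prove this statement at all. Theorem~\ref{thm:ekr} is quoted as a classical background result, attributed to Erd\H{o}s--Ko--Rado \cite{ekr61}, and is used only as motivation and as the base case (the $r=1$ case, where $\cL_{n,k}(s)$ is intersecting and hence trivially optimal) for the paper's main results. The paper's own work begins beyond the Erd\H{o}s--Ko--Rado threshold, studying systems with more than $\binom{n-1}{k-1}$ sets.
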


This is again tight, as we may take all sets containing some fixed element $i \in [n]$, a system we call a \emph{(full) star with center $i$}.

\medskip

As is befitting of such an important theorem, there have been numerous extensions to many different settings, some of which are discussed in Anderson's book \cite{anderson}.  We are particular interested in two, namely $t$-intersecting systems and $q$-matching-free systems.

\medskip

A set system $\cF$ is said to be $\emph{$t$-intersecting}$ if $\card{F_1 \cap F_2} \ge t$ for all $F_1, F_2 \in \cF$.  When $t = 1$, we simply have an intersecting system.  A natural construction of a $t$-intersecting system is to fix some $t$-set $X \in \binom{[n]}{t}$, and take all $k$ sets containing $X$; we call this a \emph{(full) $t$-star with center $X$}.  In their original paper, Erd\H{o}s-Ko-Rado showed that, provided $n$ was sufficiently large, this was best possible.

\begin{THM}[Erd\H{o}s-Ko-Rado \cite{ekr61}, 1961] \label{thm:ekrt}
If $n \ge n_0(k,t)$, and $\cF \subset \binom{[n]}{k}$ is a $t$-intersecting set system, then $\card{\cF} \le \binom{n-t}{k-t}$.
\end{THM}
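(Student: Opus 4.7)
I would follow the classical shifting/compression approach of Erd\H{o}s, Ko and Rado. For $1 \le i < j \le n$, define the compression $S_{ij}(\cF)$ by replacing each $F \in \cF$ with $j \in F$, $i \notin F$, and $(F \setminus \{j\}) \cup \{i\} \notin \cF$ by $(F \setminus \{j\}) \cup \{i\}$. A routine verification shows that $|S_{ij}(\cF)| = |\cF|$ and that $S_{ij}(\cF)$ is again $t$-intersecting (for any pair of sets, compare the two in $S_{ij}(\cF)$ to their preimages in $\cF$; the inequality $i < j$ ensures no intersection drops below $t$). Iterating these operations until the family is stable, I may assume $\cF$ is \emph{left-compressed}, i.e.\ $\cF$ is a down-set in the coordinate-wise order on $\binom{[n]}{k}$.

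The core of the proof is then the following structural claim: for $n \ge n_0(k,t)$ sufficiently large, every $F \in \cF$ contains $[t]$. Granted this claim, $\cF$ is a subfamily of $\{F \in \binom{[n]}{k} : [t] \subseteq F\}$, giving $|\cF| \le \binom{n-t}{k-t}$ immediately.

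To prove the structural claim, suppose for contradiction that some $F_0 \in \cF$ has $s := |F_0 \cap [t]| < t$. Using left-compression I may further arrange $F_0 = ([t] \setminus \{j\}) \cup \{t+1, t+2, \ldots, t+k-s\}$ for some $j \in [t]$ (any down-shift of such a configuration remains in $\cF$, and conversely a bad $F_0$ can be shifted to this canonical form without introducing $[t]$). On the other hand, since $|\cF| > \binom{n-t}{k-t}$ would be required to beat the theorem, and since the number of sets of $\binom{[n]}{k}$ meeting $[k]$ in $\ge t$ elements but avoiding the $t$-star through $[t]$ is of lower order when $n$ is large, there must exist some $G \in \cF$ containing $[t]$ with $G \cap F_0 \subseteq [t]$; concretely, $G = [t] \cup \{t+k-s+1, \ldots, 2k-s\}$ would suffice, and by left-compression it is enough to exhibit any $H \in \cF$ coordinate-wise above $G$. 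Then $F_0 \cap G = [t] \setminus \{j\}$, so $|F_0 \cap G| = t-1 < t$, contradicting the $t$-intersecting property.

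The main obstacle is guaranteeing the existence of the transverse set $G$ in $\cF$; this is precisely where the hypothesis $n \ge n_0(k,t)$ enters, to leave enough room in $[n] \setminus [t+k-s]$ for $G$ and to outweigh the ``bad'' sets by a counting argument. A more robust route, taken in the original Erd\H{o}s-Ko-Rado paper, avoids this counting step by inducting on $k$: if some element lies in every member of $\cF$, remove it to obtain a $(t-1)$-intersecting family of $(k-1)$-sets and apply induction; otherwise, a refined shifting analysis yields the bound directly. Either way, the compression setup of the first paragraph is the common foundation.
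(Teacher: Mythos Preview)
The paper does not prove this theorem at all: it is quoted as a classical result of Erd\H{o}s, Ko and Rado with a citation, and is used only as background. So there is no ``paper's own proof'' to compare against.

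As for your proposal itself, the compression setup is standard and correct, but the central structural claim is false. You assert that for a left-compressed $t$-intersecting family $\cF\subset\binom{[n]}{k}$ with $n$ large, every $F\in\cF$ must contain $[t]$. Take $k=2$, $t=1$, any $n\ge 3$, and $\cF=\{\{1,2\},\{1,3\},\{2,3\}\}$. This family is intersecting and stable under every shift $S_{ij}$ (each shift of $\{2,3\}$ lands on a set already present), yet $\{2,3\}$ does not contain $[1]=\{1\}$. So the claim fails for every $n$, not just small ones, and the subsequent step---producing a set $G\supseteq[t]$ in $\cF$ disjoint from $F_0$ outside $[t]$---breaks down: in the example $G=\{1,4\}$ is simply not in $\cF$. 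The sentence ``since $|\cF|>\binom{n-t}{k-t}$ would be required to beat the theorem'' does not supply the missing set $G$; it is circular.

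What does go through after compression is the weaker (and correct) statement that any two sets of $\cF$ intersect in at least $t$ elements \emph{within} $[2k-t]$, from which the bound follows by a counting or inductive argument; alternatively one runs the Erd\H{o}s--Ko--Rado induction on $k$ that you allude to at the end. Your final paragraph gestures at this, but as written the proof rests on the false structural claim and is not complete.
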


There was much work done on determining the correct value of $n_0(k,t)$, and how large $t$-intersecting systems can be when $n$ is small.  This problem was completely resolved by the celebrated Complete Intersection Theorem of Ahlswede and Khachatrian \cite{ahlkha97} in 1997.

\medskip

The second extension we shall consider concerns matchings.  A \emph{$q$-matching} is a collection of $q$ pairwise-disjoint sets.  A set system is therefore intersecting if and only if it does not contain a $2$-matching.  As an extension of the Erd\H{o}s-Ko-Rado theorem, Erd\H{o}s asked how large a $q$-matching-free $k$-uniform set system could be, and in \cite{erdos65} showed that when $n$ is large, the best construction consists of taking all sets meeting $[q-1]$.  He further conjectured what the solution should be for small $n$, and this remains an open problem of great interest.  For recent results on this conjecture, see, e.g., \cite{frankl13, frr11, hls12, lm12}.

\subsection{Beyond the thresholds} \label{sec:beyond}

The preceding results are all examples of the typical extremal problem, which asks how large a structure can be without containing a forbidden configuration.  In this paper, we study their Erd\H{o}s-Rademacher variants, a name we now explain.

Arguably the most well-known result in extremal combinatorics is a theorem of Mantel \cite{mantel07} from 1907, which states that an $n$-vertex triangle-free graph can have at most $\floor{ \frac{n^2}{4}}$ edges.  In an unpublished result, Rademacher strengthened this theorem by showing that any graph with $\floor{\frac{n^2}{4}} + 1$ edges must contain at least $\floor{\frac{n}{2}}$ triangles.  In \cite{erdos62a} and \cite{erdos62b}, Erd\H{o}s extended this first to graphs with a linear number of extra edges, and then to cliques larger than triangles.  More generally, for any extremal problem, the corresponding Erd\H{o}s-Rademacher problem asks how many copies of the forbidden configuration must appear in a structure larger than the extremal bound.

\medskip

In the context of intersecting systems, the Erd\H{o}s-Rademacher question was first investigated by Frankl \cite{frankl77} and, independently, Ahlswede \cite{ahlswede80} some forty years ago, who showed that the number of disjoint pairs of sets in a set system is minimized by taking the sets to be as large as possible.
\begin{THM}[Frankl \cite{frankl77}, 1977; Ahlswede \cite{ahlswede80}, 1980] \label{thm:erintersect}
If $\sum_{i=k+1}^n \binom{n}{i} \le s \le \sum_{i=k}^n \binom{n}{i}$, then the minimum number of disjoint pairs in a set system of size $s$ is attained by some system $\cF$ with $\cup_{i > k} \binom{[n]}{i} \subseteq \cF \subseteq \cup_{i \ge k} \binom{[n]}{i}$.
\end{THM}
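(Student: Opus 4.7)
The plan is a compression argument: starting from any optimal family $\cF$ of size $s$, I would apply transformations that weakly decrease the number of disjoint pairs until $\cF$ has the desired form $\cup_{i>k}\binom{[n]}{i}\subseteq\cF\subseteq\cup_{i\ge k}\binom{[n]}{i}$.

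The primary tool is the \emph{up-shift}: if $A\in\cF$ and $x\in[n]\setminus A$ satisfy $A\cup\{x\}\notin\cF$, replace $A$ by $A\cup\{x\}$. This preserves $|\cF|$ and cannot increase $\disj(\cF)$, because pairs inside $\cF\setminus\{A\}$ are unaffected while any $C\in\cF\setminus\{A\}$ disjoint from $A\cup\{x\}$ is automatically disjoint from $A$, so the contribution of the swapped set weakly decreases. Since $\sum_{F\in\cF}|F|$ strictly increases and is bounded by $ns$, this terminates with $\cF$ an upset (closed under supersets).

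Being an upset alone is not sufficient: the full star $\{A\subseteq[n]:1\in A\}$ is an upset of size $2^{n-1}$ containing singletons, and the up-shift cannot enlarge a set if all its supersets already lie in $\cF$. I would then apply a second move, swapping $A\in\cF$ with $|A|<k$ for some $B\in\binom{[n]}{\ge k}\setminus\cF$. Such a $B$ always exists when $\cF$ contains a set of size $<k$, because then $|\cF\cap\binom{[n]}{\ge k}|<|\cF|\le\sum_{i\ge k}\binom{n}{i}$. The swap changes $\disj(\cF)$ by
\[
\bigl|\{C\in\cF\setminus\{A\}:C\cap B=\emptyset\}\bigr|-\bigl|\{C\in\cF\setminus\{A\}:C\cap A=\emptyset\}\bigr|.
\]

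The main obstacle is showing $A$ and $B$ can always be chosen so this difference is non-positive. I would approach this via averaging: summing the difference over all candidates $B\in\binom{[n]}{\ge k}\setminus\cF$, the total should be non-positive because each $C\in\cF$ is disjoint from many fewer large sets $B$ than it is from the small set $A$. Making this quantitative using the upset structure of $\cF$ (which forces many large sets to already lie in $\cF$, constraining the missing ones) and the size bound on $|\cF|$ is the key technical step. Since each swap strictly increases $\sum_{F\in\cF}|F|$, alternating up-shifts and swaps terminates, yielding $\cF$ of the required form.
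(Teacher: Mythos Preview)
The paper does not actually prove this theorem; it is quoted as a known result of Frankl and Ahlswede (with a later alternative proof by Bollob\'as--Leader via fractional relaxation mentioned in passing), so there is no ``paper's own proof'' to compare against. I can therefore only assess your proposal on its own merits.

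Your up-shift step is fine and standard. The genuine gap is the second move. It is \emph{not} true in general that, once $\cF$ is an upset containing some $A$ with $|A|<k$, there exists $B\in\binom{[n]}{\ge k}\setminus\cF$ for which the swap $A\mapsto B$ weakly decreases $\disj(\cF)$; and your proposed averaging fix does not rescue this. Take $\cF=\{F:1\in F\}$, the full star, with $s=2^{n-1}$ and $A=\{1\}$. Then $A$ is disjoint from no set in $\cF\setminus\{A\}$, so you need $B$ with zero disjoint pairs into $\cF\setminus\{A\}$. But every candidate $B$ must omit $1$ (all sets containing $1$ are already in $\cF$), and any such $B$ of size $\ge k$ is disjoint from every $C\in\cF$ with $\{1\}\subsetneq C\subseteq[n]\setminus B$; there are $2^{n-|B|-1}-1>0$ of these. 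Thus \emph{every} single swap strictly increases $\disj(\cF)$, and averaging over candidates $B$ gives a strictly positive mean difference as well. The full star is nevertheless optimal (it has zero disjoint pairs), and there does exist another optimal family of the required layered form, but you cannot reach it by your local moves.

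What actually works is a non-local exchange: from an upset, replace a minimal $A$ with $|A|<k$ by its complement $[n]\setminus A$ (or more carefully, by a maximal missing set), and argue that sets disjoint from $[n]\setminus A$ are subsets of $A$, which in an upset you can control. Alternatively, one follows the original Frankl/Ahlswede compressions or the Bollob\'as--Leader fractional argument. Your proposal, as it stands, leaves exactly the step that carries the content of the theorem unproven, and the sketch you give for it is refuted by the star example.
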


Note that while this theorem provides the large-scale structure of extremal systems, it does not determine exactly which systems are optimal.  Since we have $\cup_{i > k} \binom{[n]}{i} \subset \cF$, each set of size $k$ contributes the same number of disjoint pairs with larger sets.  Hence the total number of disjoint pairs is minimized by minimizing the number of disjoint pairs between the sets of size $k$, a problem raised by Ahlswede.

\begin{QUE}[Ahlswede \cite{ahlswede80}, 1980] \label{que:ahlswede}
Given $0 \le s \le \binom{n}{k}$, which $k$-uniform set systems $\cF \subset \binom{[n]}{k}$ with $\card{\cF} = s$ minimize the number of disjoint pairs?
\end{QUE}

By the Erd\H{o}s-Ko-Rado Theorem, we know that when $s \le \binom{n-1}{k-1}$, we need not have any disjoint pairs, while for $s > \binom{n-1}{k-1}$, there must be at least one disjoint pair.  This question can thus be thought of as the Erd\H{o}s-Rademacher problem for the Erd\H{o}s-Ko-Rado Theorem.

\medskip

This question is also deeply connected to the Kneser graph.  The Kneser graph $K(n,k)$ has vertices $V = \binom{[n]}{k}$, with vertices $X$ and $Y$ adjacent if and only if the sets are disjoint.  An intersecting set system corresponds to an independent set in the Kneser graph.  Question \ref{que:ahlswede} is thus asking which $s$ vertices of the Kneser graph induce the smallest number of edges.  Since the Kneser graph is regular, this is equivalent to finding the largest bipartite subgraph of $K(n,r)$ with one part of size $s$.  Kneser graphs have been extensively studied, and the problem of determining their largest bipartite subgraphs was first raised by Poljak and Tuza in \cite{poltuz87}.

\medskip

In 2003, Bollob\'as and Leader \cite{bollead03} presented a new proof of Theorem \ref{thm:erintersect}, by relaxing the problem to a continuous version and analyzing fractional set systems.  They further considered Question \ref{que:ahlswede}, and conjectured that for small systems, the initial segment of the lexicographical ordering on $\binom{[n]}{k}$ should be optimal.  In the lexicographical ordering, we say $A < B$ if $\min( A \Delta B ) \in A$; that is, we prefer sets with smaller elements.  More generally, Bollob\'as and Leader conjectured that all the extremal systems should take the form of what they named $\ell$-balls, as defined below.  Note that a $1$-ball is an initial segment of the lexicographical ordering.

\begin{CONJ}[Bollob\'as-Leader \cite{bollead03}, 2003] \label{conj:bollead}
One of the systems $\cA_{r,l} = \{ A \in \binom{[n]}{k} : \card{A \cap [r]} \ge \ell \}$ minimizes the number of disjoint pairs.
\end{CONJ}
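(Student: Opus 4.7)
The plan is to confirm Conjecture~\ref{conj:bollead} for small families $\cF$ by reducing to left-compressed families and then invoking a Kruskal--Katona-type counting argument. The first step is a \emph{compression lemma}: for $i<j$, the standard left-shift $S_{ij}$ (which replaces each $A\in\cF$ with $j\in A,\,i\notin A$ by $(A\setminus\{j\})\cup\{i\}$ whenever the latter is not already present) weakly decreases the number of disjoint pairs. The verification is a case analysis: pair $(A,B)$ with $(S_{ij}A,S_{ij}B)$ and check that every disjoint pair arising in the shifted family can be injected back into a disjoint pair of $\cF$, using the fact that swapping $j$ for the smaller $i$ can only make sets intersect more. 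Each $\cA_{r,\ell}$ is itself left-compressed, so we lose nothing in assuming $\cF$ is shifted.

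Next, partition $\cF=\cF_1\cup\cF_{\bar 1}$ according to whether $1\in A$, and let $\cL=\{A\setminus\{1\}:A\in\cF_1\}\subset\binom{[2,n]}{k-1}$ be the link at $1$. Any two sets in $\cF_1$ share the element $1$, so
\[
\disj(\cF)=\disj(\cF_{\bar 1})+\sum_{B\in\cF_{\bar 1}}\Bigl|\cL\cap\tbinom{[2,n]\setminus B}{k-1}\Bigr|.
\]
For $s$ only slightly above $\binom{n-1}{k-1}$, the family $\cF_{\bar 1}$ is small and one expects $\disj(\cF_{\bar 1})=0$, so the total count is governed by the cross term. Shiftedness forces $\cL$ to be (almost) all of $\binom{[2,n]}{k-1}$, structured as an initial colex segment, and the cross term is minimized precisely when $\cF_{\bar 1}$ is itself an initial colex segment of $\binom{[2,n]}{k}$: the $(k-1)$-subsets of $[2,n]\setminus B$ lying \emph{outside} $\cL$ are controlled by a shadow count that is maximized by colex segments via Kruskal--Katona.

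The argument is completed by induction on the excess $s-\binom{n-1}{k-1}$. The base case $s\le\binom{n-1}{k-1}$ is the Erd\H{o}s--Ko--Rado theorem. For the inductive step, at each size the uniquely best new set to add is the lex-next one, and these accumulate to form a system of the shape $\cA_{r,\ell}$; identifying which $\ell$-ball appears for each $s$ is a bookkeeping exercise in binomial identities.

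The main obstacle will be the Kruskal--Katona step: we need not just the inequality but sharp control of its equality/stability case, to pin down the extremal $\cF_{\bar 1}$ (rather than just estimate it), and this structural information must be carried through the induction while handling the transitions between consecutive $\ell$-balls. The precise range of $s$ for which the argument succeeds — that is, the quantitative meaning of ``small'' — is determined by the first $s$ at which some hybrid between two distinct $\cA_{r,\ell}$'s outperforms every single $\ell$-ball in the cross-term estimate. Locating this threshold, and verifying the colex equality case uniformly below it, is the delicate quantitative heart of the argument.
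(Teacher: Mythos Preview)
Conjecture~\ref{conj:bollead} is not proved in the paper; only the partial case Theorem~\ref{thm:disjpairs} is established, so your proposal should be read against that argument. Your compression step is sound, and your decomposition at the element $1$ coincides with the paper's induction \emph{when $\cF$ contains the full star at $1$}: then $\cL=\binom{[2,n]}{k-1}$, the cross term equals $|\cF_{\bar 1}|\binom{n-k-1}{k-1}$ independently of structure, and one recurses on $\cF_{\bar 1}$. The divergence is in the no-full-star case, which the paper handles without compression or Kruskal--Katona: it locates a popular element $x$ with $|\cF(x)|\ge s/(3r)$, bootstraps to a cover of bounded size, sharpens that cover to exactly $r$ elements by estimating disjoint pairs between the parts, and finishes with a complementary argument inside the union of $r$ full stars. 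This cover machinery is what yields the explicit range $n>108k^2\ell(k+\ell)$.

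Your treatment of the no-full-star case has genuine gaps. Shiftedness makes the link $\cL$ shifted on $[2,n]$ and forces $\partial\cF_{\bar 1}\subset\cL$, but neither fact makes $\cL$ ``almost all of $\binom{[2,n]}{k-1}$'' or a colex initial segment --- shifted families are far more general. The cross term is bilinear in $(\cL,\cF_{\bar 1})$ with the two factors coupled through the shifted constraint, so they cannot be optimised independently, and it is unclear which Kruskal--Katona statement you mean to invoke or why colex would be extremal for it. Finally, the greedy induction ``at each size the best new set to add is the lex-next one'' does not establish global optimality: the extremal family at size $s$ need not extend one at size $s-1$, and indeed Proposition~\ref{prop:structure} exhibits many non-isomorphic optima at each size. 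Without a substitute for the paper's popular-element and cover arguments, the outline does not close even in the regime the paper handles.
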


When $k=2$, we can think of a set system in $\binom{[n]}{2}$ as a graph on $n$ vertices, and are then asking for which graphs of a given size minimize the number of disjoint pairs of edges.  This problem was solved by Ahslwede and Katona \cite{ahlkat78} in 1978, who showed that the extremal graphs were always either the union of stars (a collection of vertices connected to all the other vertices), or their complement.  In that paper, they asked for a different generalization, namely which $k$-uniform set systems minimize the number of $(k-1)$-disjoint pairs.  This is the Erd\H{o}s-Rademacher problem for $t$-intersecting systems when $t = k-1$, and is known as the Kleitman-West problem.  It shares some connections to information theory, and while Harper solved a continuous approximation to the problem \cite{harper91}, an exact solution appears difficult to obtain.  Indeed, a natural conjecture of Kleitman for this problem has been proven to be untrue \cite{ahlcai99}.

\subsection{Our results}

Our main result verifies Conjecture \ref{conj:bollead} for small systems, showing that initial segments of the lexicographical ordering minimize the number of disjoint pairs.  We denote by $\cL_{n,k}(s)$ the first $s$ sets in the lexicographical ordering on $\binom{[n]}{k}$.  Note that the size of $\ell$ full stars, say with centers $\{1,2,\hdots, \ell\}$, is $\binom{n}{k} - \binom{n-\ell}{k}$.  The following theorem shows that provided $n$ is large enough with respect to $k$ and $\ell$, it is optimal to take sets from the first $\ell$ stars.

\begin{THM}\label{thm:disjpairs}
Provided $n > 108 k^2 \ell (k + \ell)$ and $0 \le s \le \binom{n}{k} - \binom{n - \ell}{k}$, $\cL_{n,k}(s)$ minimizes the number of disjoint pairs among all systems of $s$ sets in $\binom{[n]}{k}$.
\end{THM}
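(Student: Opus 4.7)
The plan is to proceed by induction on $\ell$. The base case $\ell = 1$ is immediate: for $s \le \binom{n-1}{k-1}$, the lex initial segment $\cL_{n,k}(s)$ lies inside the star centered at $1$, hence is intersecting and $\disj(\cL_{n,k}(s)) = 0$.

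For the inductive step, fix $\ell \ge 2$ and $s$ with $\binom{n}{k} - \binom{n-\ell+1}{k} < s \le \binom{n}{k} - \binom{n-\ell}{k}$, and let $\cF$ be any family of size $s$ minimizing $\disj$. First I would apply the standard compression operators $S_{ij}$ for $1 \le i < j \le n$---which preserve $|\cF|$ and do not increase $\disj(\cF)$---to reduce to the case that $\cF$ is left-compressed (shifted). Decompose $\cF = \cF_1 \sqcup \cF_0$ according to whether a set contains $1$. The heart of the proof is the structural claim that $\cF$ contains the full star through $1$, i.e.\ that $|\cF_1| = \binom{n-1}{k-1}$.

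Granting this claim, the induction closes cleanly. Since all sets in $\cF_1$ share element $1$, no disjoint pairs arise inside $\cF_1$, and each $F \in \cF_0$ is disjoint from exactly $\binom{n-1-k}{k-1}$ sets of the full star, so
\begin{equation*}
\disj(\cF) \;=\; \disj(\cF_0) \;+\; |\cF_0| \binom{n-1-k}{k-1},
\end{equation*}
with the second term determined by $s$ alone. Minimizing $\disj(\cF)$ therefore reduces to minimizing $\disj(\cF_0)$ over $k$-uniform families on $\{2,\ldots,n\}$ of size $|\cF_0| = s - \binom{n-1}{k-1} \le \binom{n-1}{k} - \binom{n-\ell}{k}$. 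The bound $n-1 > 108 k^2(\ell-1)(k+\ell-1)$ is implied by the hypothesis on $n$, so applying the inductive assumption on the ground set $\{2,\ldots,n\}$ with parameter $\ell-1$ identifies the minimizer as the corresponding lex initial segment; combined with the full star at $1$, this is exactly $\cL_{n,k}(s)$.

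The main obstacle---and where the large-$n$ hypothesis is used---is establishing the structural claim. Suppose for contradiction that $|\cF_1| < \binom{n-1}{k-1}$. Choose $G^* \in \binom{[n]}{k} \setminus \cF$ with $1 \in G^*$ that is lex-smallest among missing sets through $1$, and $F^* \in \cF_0$ that is lex-largest (note $\cF_0 \neq \emptyset$, since $\ell \ge 2$ forces $s > \binom{n-1}{k-1}$). Swapping $F^*$ for $G^*$ changes the number of disjoint pairs by
\begin{equation*}
\card{\{H \in \cF \setminus \{F^*\} : H \cap G^* = \emptyset\}} - \card{\{H \in \cF \setminus \{F^*\} : H \cap F^* = \emptyset\}}.
\end{equation*}
Since $1 \in G^*$, the first count only sees sets in $\cF_0$ and is at most $|\cF_0| = O(\ell \binom{n-2}{k-1})$ via the hockey-stick identity and the range of $s$. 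For the second count, I would exploit shiftedness to derive the shadow inclusion $\partial \cF_0 \subseteq \{F \setminus \{1\} : F \in \cF_1\}$; a careful accounting of $(k-1)$-subsets of $\{2,\ldots,n\} \setminus F^*$ lying in this shadow then yields a lower bound close to $\binom{n-1-k}{k-1}$. The hypothesis $n > 108 k^2 \ell(k+\ell)$ is calibrated precisely so that this lower bound strictly exceeds the upper bound on $|\cF_0|$, producing a strict decrease in disjoint pairs and contradicting extremality. I expect the technical subtlety of absorbing the ``defect'' when a single swap falls short---possibly by iterating swaps or refining the shadow bound using several dom-maximal sets of $\cF_0$ at once---to account for the explicit constant $108$ and to constitute the most delicate part of the proof.
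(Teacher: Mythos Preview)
Your inductive framework and the reduction when a full star is present are sound, and indeed match one branch of the paper's own argument. The gap is in the structural claim that a shifted extremal family must contain the full star at $1$. Your single-swap estimate does not close: bounding the number of sets in $\cF$ disjoint from $G^*$ by $|\cF_0|$ gives something of order $(\ell-1)\binom{n-2}{k-1} + d$ (where $d = \binom{n-1}{k-1} - |\cF_1|$), whereas the number of sets in $\cF$ disjoint from $F^*$ is at most $\binom{n-k-1}{k-1}$ from $\cF_1$, minus up to $d$ for the missing sets. Already for $\ell = 2$ you would need $\binom{n-k-1}{k-1} - d > \binom{n-2}{k-1} + d$, and the left-hand side is smaller than the right even at $d = 0$. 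For $\ell \ge 3$ the discrepancy is worse by a factor of $\ell-1$, and no hypothesis of the form $n > C k^2 \ell(k+\ell)$ repairs this, because both sides are of the same order $n^{k-1}$. The shadow inclusion $\partial \cF_0 \subseteq \{F \setminus \{1\} : F \in \cF_1\}$ tells you $\cF_1$ is large, but it does not by itself force many sets of $\cF_1$ to lie in the complement of $F^*$. Your closing remark that iterating swaps or using several maximal sets at once should absorb the defect is where the real work would have to happen, and nothing in the sketch indicates how the factor-of-$\ell$ obstacle is overcome.

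The paper proceeds quite differently and, notably, never establishes your structural claim directly. It does not compress. Assuming no full star, it first uses a second-moment count on the degrees $|\cF(x)|$ (together with the upper bound $\disj(\cF) \le \frac12(1 - \frac{1}{r})s^2$) to locate an element in at least $s/(3r)$ sets; from this it extracts a cover of bounded size, and then through a sequence of averaging and exchange arguments shows that an extremal family has a cover of size exactly $r$. The final step is a complementary argument inside the union $\cA$ of $r$ full stars: writing $\cG = \cA \setminus \cF$, one has $\disj(\cF) = \disj(\cA) - \disj(\cG,\cA) + \disj(\cG)$, and the lex family simultaneously maximises the middle term and annihilates the last. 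So the paper never forces a full star to appear in the no-full-star case; it instead shows that a minimum-size cover exists and lets the complement do the work.
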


As a by-product of our proof, we shall obtain a characterization of the extremal systems in this range, which we provide in Proposition \ref{prop:structure}.  Corollary \ref{cor:largefam} shows that we can also use Theorem \ref{thm:disjpairs} to determine which systems are optimal when $s$ is very close to $\binom{n}{k}$.

We further show that $\cL_{n,k}(s)$ also minimizes the number of $q$-matchings.

\begin{THM} \label{thm:qmatchings}
Provided $n > n_1(k,q,\ell)$ and $0 \le s \le \binom{n}{k} - \binom{n - \ell}{k}$, $\cL_{n,k}(s)$ minimizes the number of $q$-matchings among all systems of $s$ sets in $\binom{[n]}{k}$.
\end{THM}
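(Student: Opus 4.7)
The plan is to mirror the proof strategy used for Theorem~\ref{thm:disjpairs} (the $q=2$ case), using the larger room $n > n_1(k,q,\ell)$ to absorb error terms. The argument has two main stages: a shifting/exchange step that reduces to families supported on the first $\ell$ stars, followed by a direct count on such families.

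\textit{Step 1 (Reduction to $[\ell]$-covered families).} Since $\card{\cF} \le \binom{n}{k} - \binom{n-\ell}{k}$, which equals the number of $k$-sets meeting $[\ell]$, whenever some $A \in \cF$ avoids $[\ell]$ there is a set $B \notin \cF$ meeting $[\ell]$. I would choose $B = (A \setminus \{j\}) \cup \{i\}$ with $i \in [\ell]$ and $j \in A$, and show that the swap $\cF \mapsto (\cF \setminus \{A\}) \cup \{B\}$ does not increase the number of $q$-matchings. Writing $\cF_{\bar X} = \{G \in \cF \setminus \{X\} : G \cap X = \emptyset\}$, the change equals the difference between the number of $(q-1)$-matchings in $\cF_{\bar B} \setminus \{A\}$ and the number of $(q-1)$-matchings in $\cF_{\bar A}$. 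A set disjoint from $B$ must avoid $i$, while sets disjoint from $A$ have no such constraint, so heuristically $\cF_{\bar B} \subseteq \cF_{\bar A}$ up to lower-order corrections, and iterating the swap drives $\cF$ into the union of the first $\ell$ stars.

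\textit{Step 2 (Counting $q$-matchings in covered families).} Once every $F \in \cF$ meets $[\ell]$, partition $\cF = \cF_1 \cup \cdots \cup \cF_\ell$ with $\cF_i = \{F \in \cF : \min(F \cap [\ell]) = i\}$. Any $q$-matching uses at most one set per $\cF_i$, so in particular requires $q \le \ell$, and the total count equals
\[
\sum_{\{i_1, \ldots, i_q\} \subseteq [\ell]} \card{\{(F_1, \ldots, F_q) \in \cF_{i_1} \times \cdots \times \cF_{i_q} : F_a \cap F_b = \emptyset \text{ for all } a \neq b\}}.
\]
For $n > n_1(k,q,\ell)$ this is, to leading order, $\sum \prod_a \card{\cF_{i_a}}$, with corrections of smaller order in $n$ coming from the disjointness constraint on $[\ell+1,n]$. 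The lex initial segment $\cL_{n,k}(s)$ fills $\cF_1$ completely before $\cF_2$, and so on, so its partition sizes are extremal; any local transfer of mass from a low-index $\cF_i$ to a higher-index $\cF_j$ introduces new $q$-subsets of $[\ell]$ that contribute to the sum, and the asymptotic formula above shows that this strictly increases the count. A further compression within each $\cF_i$ (taking it as a lex initial segment on $\{i\} \cup [\ell+1,n]$) completes the reduction.

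\textit{Main obstacle.} The key difficulty is Step 1. For $q=2$ the change reduces to comparing $\card{\cF_{\bar A}}$ with $\card{\cF_{\bar B}}$, which is tractable. For general $q$ the comparison involves $(q{-}1)$-matching counts in two different link families, and these are only monotone in a weak, asymptotic sense. I would proceed by induction on $q$, using the inductive hypothesis to identify the leading-order term in each link count as roughly $\card{\cF_{\bar X}}^{q-1}/(q-1)!$ and then arguing that the difference $\card{\cF_{\bar A}} - \card{\cF_{\bar B}}$ is positive and large enough, for $n > n_1(k,q,\ell)$, to dominate the remaining error. Determining the correct threshold $n_1(k,q,\ell)$ — ensuring it suffices to handle all the error terms at every stage of the exchange — is the most delicate accounting in the proof.
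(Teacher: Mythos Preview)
Your Step~1 has a genuine gap. The swap $A \mapsto B = (A \setminus \{j\}) \cup \{i\}$ is \emph{not} monotone for the $q$-matching count without further information about $\cF$. You write that ``sets disjoint from $A$ have no such constraint,'' but they do: they must avoid $j$. So $\cF_{\bar A}$ and $\cF_{\bar B}$ differ in both directions --- $\cF_{\bar A} \setminus \cF_{\bar B}$ consists of sets containing $i$ avoiding $A$, while $\cF_{\bar B} \setminus \cF_{\bar A}$ consists of sets containing $j$ avoiding $B$ --- and neither side is a ``lower-order correction'' unless you already know that $i$ is popular in $\cF$ and $j$ is not. At the start of the iteration you have no such information; the swap can strictly increase the number of $q$-matchings. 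The paper avoids this by never shifting $\cF$ at all: it assumes $\cF$ is extremal and uses that hypothesis to deduce structure. Specifically, it first shows (by comparing $\qdisj(\cF)$ and $\disj^{(q-1)}(\cF)$ against the lex bounds, using the inductive hypothesis on $q$) that some element $x$ lies in $\Omega(s^{q-1})$ of the $(q-1)$-matchings of $\cF$, and then that the set $X$ of such popular elements is a cover of bounded size. The exchange argument appears only there, and only in the contrapositive form ``if $X$ weren't a cover, $\cF$ wouldn't be extremal.''

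Your Step~2 is also weaker than what is needed. You reduce to a cover of size $\ell$, but the paper works harder to get a cover of size exactly $r$, where $r$ is determined by $s$ via $\binom{n}{k}-\binom{n-r+1}{k} < s \le \binom{n}{k}-\binom{n-r}{k}$. This matters: once the cover has the minimal size $r$, the complement $\cG$ of $\cF$ inside $\cA = \{A : A \cap [r] \neq \emptyset\}$ is small enough that every $q$-matching in $\cA$ uses at most one set of $\cG$, so the identity $\qdisj(\cF) \ge \qdisj(\cA) - |\cG|\disj^{(q-1)}(\cA) + \sum_{G \in \cG}|\cA^{(q-1)}(G)|$ holds with equality for $\cL_{n,k}(s)$, and it suffices to check that each $|\cA^{(q-1)}(G)|$ is minimized when $|G \cap [r]| = 1$. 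Your leading-order symmetric-function heuristic $e_q(|\cF_1|,\ldots,|\cF_\ell|)$ does point in the right direction, but it cannot by itself pin down the exact minimizer, and your ``further compression within each $\cF_i$'' step is not a well-defined operation on the $q$-matching count.
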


Finally, we extend our methods to determine which systems minimize the number of $t$-disjoint pairs.  When $t = k-1$, this provides a partial solution to the problem of Kleitman and West.  When $n$ is large with respect to $k,t$ and $\ell$, an extremal system is contained in the union of $\ell$ full $t$-stars.  As we shall discuss in Section \ref{sec:tdisjoint}, not all such unions are isomorphic, and once again it is the lexicographical ordering that is optimal.

\begin{THM} \label{thm:tdisjoint}
Provided $n \ge n_2(k,t,\ell)$ and $0 \le s \le \binom{n-t+1}{k-t+1} - \binom{n-t-\ell+1}{k-t+1}$, $\cL_{n,k}(s)$ minimizes the number of $t$-disjoint pairs among all systems of $s$ sets in $\binom{[n]}{k}$.
\end{THM}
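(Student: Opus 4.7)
The plan is to reduce Theorem~\ref{thm:tdisjoint} to the already-proven Theorem~\ref{thm:disjpairs} via a structural lemma. Write $\mathcal{K} := \{A \in \binom{[n]}{k} : [t-1] \subseteq A\}$, so that $|\mathcal{K}| = \binom{n-t+1}{k-t+1}$. The map $A \mapsto A \setminus [t-1]$ is a bijection from $\mathcal{K}$ onto $\binom{[n]\setminus[t-1]}{k-t+1}$ sending $t$-disjoint pairs to ordinary disjoint pairs. Consequently, if a minimizing family $\cF$ satisfies $\cF \subseteq \mathcal{K}$, then after relabeling we may apply Theorem~\ref{thm:disjpairs} with parameters $(n-t+1,k-t+1,\ell)$ to the quotient $\cF' := \{A \setminus [t-1] : A \in \cF\}$. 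This shows $\cF'$ is optimized by $\cL_{n-t+1,k-t+1}(s)$, which lifts to $\cL_{n,k}(s)$ since the induced lex order on $\mathcal{K}$ agrees with the lex order on its quotient. We would choose $n_2(k,t,\ell)$ so that $n-t+1 > 108(k-t+1)^2\ell(k-t+1+\ell)$, thereby meeting the hypothesis of Theorem~\ref{thm:disjpairs}.

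The main task is therefore the structural claim that a minimizer satisfies $\cF \subseteq \mathcal{K}$. I would first apply standard left-compressions $S_{ij}$ for $1 \le i < j \le n$; these do not increase the number of $t$-disjoint pairs, as can be checked by a pair-by-pair analysis of how intersection sizes transform under shifts, so we may assume $\cF$ is left-compressed. Suppose for contradiction some $F \in \cF$ fails to contain $[t-1]$; left-compressedness forces $F \cap [t-1]$ to be an initial segment, so $F$ omits some $j^* \in [t-1]$ while containing $[j^*-1]$. Since $|\cF| \le \binom{n-t+1}{k-t+1} - \binom{n-t-\ell+1}{k-t+1} < |\mathcal{K}|$, there exists $G \in \mathcal{K} \setminus \cF$, and I would exhibit one for which the swap $\cF \mapsto (\cF \setminus \{F\}) \cup \{G\}$ strictly decreases the $t$-disjoint pair count, contradicting minimality. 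Writing $d(H) := |\{A \in \cF \setminus \{F\} : |A \cap H| < t\}|$, we need $d(G) < d(F)$. Because $F$ misses $j^* \in [t-1]$, every $A \in \mathcal{K}$ satisfies $|A \cap F| \le (t-2) + |(A \setminus [t-1]) \cap (F \setminus [t-1])|$, so $A$ is $t$-disjoint from $F$ unless $|(A \setminus [t-1]) \cap (F \setminus [t-1])| \ge 2$, yielding $d(F) \ge |\mathcal{K} \cap (\cF \setminus \{F\})| - O(n^{k-t})$. On the other hand, choosing $G$ so that $G \setminus [t-1]$ is lex-smallest among $(k-t+1)$-subsets not used by $\cF$ bounds $d(G)$ by $|\cF \setminus \mathcal{K}| + O(n^{k-t})$, since any two $\mathcal{K}$-members with small tails share many common elements. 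For $n$ large the dominant terms $|\mathcal{K}\cap\cF|$ and $|\cF \setminus \mathcal{K}|$ are wildly asymmetric, yielding the strict inequality $d(F) > d(G)$.

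The chief obstacle is calibrating the swap uniformly across the range $0 \le s \le \binom{n-t+1}{k-t+1} - \binom{n-t-\ell+1}{k-t+1}$. When $s$ is close to the upper threshold, $\cF$ may nearly fill a union of $\ell$ full $t$-stars, so a crude bound on $|\cF \setminus \mathcal{K}|$ might erode the gap $d(F) - d(G)$; a sharp structural bound on how many sets a shifted family of this size can have outside $\mathcal{K}$ is essential. A single swap also need not preserve shiftedness, so the argument must be iterated: re-shift, locate another stray $F \notin \mathcal{K}$, and swap again, tracking a strictly decreasing potential (either the pair count itself, or that together with $|\cF \setminus \mathcal{K}|$). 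Once $\cF \subseteq \mathcal{K}$ is secured, the subtlety flagged in Section~\ref{sec:tdisjoint}---that different unions of $\ell$ full $t$-stars may be non-isomorphic---is absorbed into the quotient problem: Theorem~\ref{thm:disjpairs} already identifies the lex-initial segment as the unique extremal shape (up to relabeling), and lifting back singles out the common-$(t-1)$-core configuration as the unique optimum, precisely $\cL_{n,k}(s)$.
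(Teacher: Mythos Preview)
Your reduction strategy is exactly right in spirit: once you know an optimal family lives inside some translate of $\mathcal{K}=\{A:[t-1]\subseteq A\}$, the bijection $A\mapsto A\setminus[t-1]$ converts the $t$-disjoint problem into the ordinary disjoint problem on $(k-t+1)$-sets, and Theorem~\ref{thm:disjpairs} finishes. This is precisely the content of the paper's Corollary~\ref{cor:toptimal}.

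The gap is in establishing $\cF\subseteq\mathcal{K}$ via shifting plus single swaps. Consider the left-compressed family
\[
\cF=\Bigl\{A\in\tbinom{[n]}{k}:|A\cap[t+1]|\ge t\Bigr\},
\]
which is the union of the $t+1$ full $t$-stars with centers in $\binom{[t+1]}{t}$ (this arises in the range of the theorem whenever $\ell\ge t+2$). Take any $F\in\cF\setminus\mathcal{K}$, say $F=([t+1]\setminus\{1\})\cup\{f_1,\dots,f_{k-t}\}$ with each $f_j>t+1$, and any $G\in\mathcal{K}\setminus\cF$; necessarily $G\cap[t+1]=[t-1]$, so $G=[t-1]\cup\{g_1,\dots,g_{k-t+1}\}$ with each $g_j>t+1$. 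A direct count gives, to leading order,
\[
d(F)=t\tbinom{n-k-1}{k-t}+O(n^{k-t-1}),\qquad d(G)=(t+1)\tbinom{n-k-2}{k-t}+O(n^{k-t-1}),
\]
so $d(G)-d(F)=\tbinom{n-k-2}{k-t}+O(n^{k-t-1})>0$ for large $n$. Every swap $F\to G$ with $F\notin\mathcal{K}$, $G\in\mathcal{K}\setminus\cF$ therefore \emph{increases} the number of $t$-disjoint pairs; your iteration never gets off the ground, and no lexicographic potential of the form $(\tdisj(\cF),|\cF\setminus\mathcal{K}|)$ rescues it. (Incidentally, left-compressedness does not force $F\cap[t-1]$ to be an initial segment, though this is not the essential issue.)

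This configuration is exactly the obstruction the paper isolates: Proposition~\ref{prop:tstars} first pins any optimal family to a union of roughly $r$ almost-full $t$-stars, and then Lemma~\ref{lem:fullstars} rules out the $\binom{[t+1]}{t}$-type center structure by a global second-order comparison of $\tint$ rather than by a local move. Your argument would need an analogue of that global step; the shifting-plus-swap machinery alone cannot see past this local minimum.
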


We again characterize all extremal systems in Corollary \ref{cor:toptimal}.

\subsection{Outline and notation}

The remainder of the paper is organized as follows.  In Section \ref{sec:disjpairs} we study disjoint pairs, and prove Theorem \ref{thm:disjpairs}.  In Section \ref{sec:qmatchings}, we consider the number of $q$-matchings, and prove Theorem \ref{thm:qmatchings}.  In Section \ref{sec:tdisjoint}, we extend our results to $t$-disjoint pairs, proving Theorem \ref{thm:tdisjoint}.  In the final section we present some concluding remarks and open problems.

We denote by $[n]$ the set of the first $n$ positive integers, and use this as the ground set for our set systems.  Given a set $X$, $\binom{X}{k}$ is the system of all $k$-subsets of $X$.  The number of disjoint pairs between two systems $\cF$ and $\cG$ is given by $\disj(\cF, \cG) = \card{ \{ (F, G) \in \cF \times \cG: F \cap G = \emptyset \} }$, and the number of disjoint pairs within a system $\cF$ is denoted by $\disj(\cF) = \frac12 \disj(\cF, \cF)$.

For given $n,k$ and $s$, we let $\disj(n,k,s)$ denote the minimum of $\disj(\cF)$ over all $k$-uniform set systems on $[n]$ of size $s$.  We define $\qdisj(\cF)$ and $\qdisj(n,k,s)$ similarly for the number of $q$-matchings in set systems, and $\tdisj(\cF), \tdisj(\cF,\cG),$ and $\tdisj(n,k,s)$ for the number of $t$-disjoint pairs.

Given any set system $\cF$, and a set $X \subset [n]$, we let $\cF(X) = \{F \in \cF: X \subset F\}$ be those sets in the system containing $X$.  If $X$ is a singleton, we shall drop the set notation, and write $\cF(x)$.  Finally, we define a $\emph{cover}$ of a system to be a set $X$ with $\cup_{x \in X} \cF(x) = \cF$; that is, a set of elements that touches every set.  A \emph{$t$-cover} is a collection of $t$-sets such that every set in the system contains one of the $t$-sets.

\section{Disjoint Pairs} \label{sec:disjpairs}

In this section we will show that for small systems, initial segments of the lexicographical ordering, $\cL_{n,k}(s)$, minimize the number of disjoint pairs.  Note that when $s \le \binom{n-1}{k-1}$, $\cL_{n,k}(s)$ is a star, which is an intersecting system and thus clearly optimal.  The following result of Katona et al \cite{katona12} shows that if we add one set to a full star, the resulting system will also be optimal.

\begin{PROP}\label{prop:onesetmore}
Suppose $n \ge 2k$.  Any system $\cF \subset \binom{[n]}{k}$ with $|\cF| = \binom{n-1}{k-1} + 1$ contains at least $\binom{n-k-1}{k-1}$ disjoint pairs.
\end{PROP}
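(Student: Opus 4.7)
The plan is to argue by contradiction, combining a vertex cover argument on the disjointness graph with the Hilton-Milner theorem for non-star intersecting families. Write $M := \binom{n-k-1}{k-1}$ and suppose for contradiction that $\disj(\cF) \le M - 1$. The boundary case $n = 2k$ has $M = 1$ and follows directly from Theorem~\ref{thm:ekr}, since any $\cF \subset \binom{[n]}{k}$ with $\card{\cF} > \binom{n-1}{k-1}$ must contain a disjoint pair; so assume $n \ge 2k + 1$, where Hilton-Milner applies.

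Form the graph $G$ on vertex set $\cF$ whose edges are the disjoint pairs, and let $\mathcal{R}$ be a vertex cover of $G$ obtained by selecting one endpoint from each edge, so $\card{\mathcal{R}} \le \disj(\cF) \le M - 1$. Then $\cF' := \cF \setminus \mathcal{R}$ is intersecting with $\card{\cF'} \ge \binom{n-1}{k-1} + 1 - (M-1) = \binom{n-1}{k-1} - M + 2$. Since this strictly exceeds the Hilton-Milner bound $\binom{n-1}{k-1} - M + 1$ for non-star intersecting families, the sets in $\cF'$ share a common element $x \in [n]$; equivalently, $\cF' \subseteq \cF(x)$.

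Now let $\cB = \cF \setminus \cF(x)$ and $b = \card{\cB}$. Since $\cF' \subseteq \cF(x)$, we have $\cB \subseteq \mathcal{R}$, giving $b \le M - 1$; and the inequality $\card{\cF(x)} \le \binom{n-1}{k-1}$ forces $b \ge 1$. Thus $\cF(x)$ omits exactly $b-1$ sets of the full star at $x$. For any fixed $B \in \cB$ the full star at $x$ contains exactly $M$ sets disjoint from $B$ (the non-$x$ elements are chosen from $[n]\setminus(B\cup\{x\})$), so at least $M - (b-1)$ of these remain in $\cF(x)$. Summing over $\cB$ gives $\disj(\cF) \ge b(M - b + 1)$, and the identity $b(M-b+1) - M = (b-1)(M-b)$ is nonnegative throughout $1 \le b \le M - 1$. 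This yields $\disj(\cF) \ge M$, contradicting our assumption.

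The main potential obstacle is ensuring that the Hilton-Milner bound leaves exactly the right amount of slack: the argument works precisely because $\card{\cF}$ is only one above the EKR threshold, so even after discarding up to $M-1$ sets in the vertex cover, the intersecting remainder still exceeds the non-star bound. To push this to families further above the threshold, Hilton-Milner alone would no longer suffice and one would need stronger stability results for intersecting systems.
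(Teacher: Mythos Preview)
The paper does not actually prove Proposition~\ref{prop:onesetmore}; it is quoted as a result of Katona, Katona and Katona~\cite{katona12} and then used as motivation. So there is no proof in the paper to compare against, and your argument should be judged on its own.

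Your proof is correct. The vertex-cover step is valid: choosing one endpoint from each edge of the disjointness graph yields a set $\mathcal{R}$ of size at most $\disj(\cF) \le M-1$ that meets every edge, so $\cF' = \cF \setminus \mathcal{R}$ is intersecting with $\card{\cF'} \ge \binom{n-1}{k-1} - M + 2$, strictly above the Hilton--Milner threshold $\binom{n-1}{k-1} - M + 1$. This forces a common element $x$, and then with $b = \card{\cF \setminus \cF(x)}$ one has $1 \le b \le M-1$, the star $\cF(x)$ omits exactly $b-1$ sets of the full star, each $B \in \cB$ is disjoint from at least $M-(b-1)$ sets remaining in $\cF(x)$, and the identity $b(M-b+1)-M=(b-1)(M-b)\ge 0$ finishes the contradiction. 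The boundary case $n=2k$ (where $M=1$) is handled directly by Erd\H{o}s--Ko--Rado, and $k=1$ is trivial, so the appeal to Hilton--Milner for $n\ge 2k+1$, $k\ge 2$ is legitimate.

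Two minor remarks. First, Hilton--Milner is not stated anywhere in the paper, so your proof imports an outside tool; this is perfectly acceptable, but worth flagging. Second, your closing comment is on point: the argument hinges on the fact that discarding $M-1$ sets still leaves more than the Hilton--Milner bound, which is exactly the slack afforded by having only one set above the Erd\H{o}s--Ko--Rado threshold. Extending beyond $s=\binom{n-1}{k-1}+1$ via this route would indeed require stronger stability, which is essentially what the rest of Section~\ref{sec:disjpairs} develops by different means.
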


Our first theorem shows that as we add sets to the system, we should try to cover our system with as few stars as possible, as is achieved by $\cL_{n,k}(s)$.  Later, in Proposition \ref{prop:structure}, we shall precisely characterize all extremal systems.  We begin by recalling the statement of the theorem.

\setcounter{section}{1}
\setcounter{THM}{5}

\begin{THM}
Provided $n > 108 k^2 \ell (k + \ell)$ and $0 \le s \le \binom{n}{k} - \binom{n - \ell}{k}$, $\cL_{n,k}(s)$ minimizes the number of disjoint pairs among all systems of $s$ sets in $\binom{[n]}{k}$.
\end{THM}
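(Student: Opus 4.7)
The natural plan is to induct on $\ell$. The base case $\ell = 1$ follows from the Erd\H{o}s--Ko--Rado theorem (for $s \leq \binom{n-1}{k-1}$, $\cL_{n,k}(s)$ sits inside a star and contributes no disjoint pairs) together with Proposition~\ref{prop:onesetmore}, after verifying that $\cL_{n,k}(\binom{n-1}{k-1}+1)$ indeed achieves the bound $\binom{n-k-1}{k-1}$. For the inductive step, fix an extremal $\cF$ of size $s \in (\binom{n}{k} - \binom{n-\ell+1}{k}, \binom{n}{k} - \binom{n-\ell}{k}]$. The guiding observation is that $[\ell]$ is a cover of $\cL_{n,k}(s)$, and the plan is to establish the same structural feature for $\cF$, then peel off the largest star and recurse.

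The crux is to show that the cover number $\tau(\cF)$ is at most $\ell$. I would argue by contradiction. On one hand, since every set in $\cL_{n,k}(s)$ meets $[\ell]$, each disjoint pair uses two distinct elements of that cover, so $\disj(\cL_{n,k}(s)) \leq \binom{\ell}{2}\binom{n-2}{k-2}\binom{n-k-1}{k-1}$. On the other hand, if $\tau(\cF) \geq \ell+1$, a greedy choice of $\ell$ elements $x_1, \ldots, x_\ell$ of largest degree leaves a non-empty residual $\cF^\ast = \cF \setminus \bigcup_i \cF(x_i)$. Each $F \in \cF^\ast$ avoids every $x_i$, and a Hilton--Milner-style count shows that $F$ is disjoint from many members of each (necessarily large) star $\cF(x_i)$. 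For $n > 108k^2 \ell (k+\ell)$ these estimates combine to give $\disj(\cF) > \disj(\cL_{n,k}(s))$, contradicting extremality.

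Given $\tau(\cF) \leq \ell$, pick an element $x$ in a minimum cover with $|\cF(x)|$ maximal; relabel so $x = 1$ and write $a = |\cF(1)|$. Using that no two sets in $\cF(1)$ are disjoint, split $\disj(\cF) = \disj(\cF \setminus \cF(1)) + \disj(\cF(1), \cF \setminus \cF(1))$. The family $\cF \setminus \cF(1)$ lives in $\binom{[n]\setminus\{1\}}{k}$, is covered by $\ell - 1$ elements, and has size in the correct range, so the inductive hypothesis yields $\disj(\cF \setminus \cF(1)) \geq \disj(\cL_{n-1,k}(s-a))$. For the cross term, at most $\binom{n-1}{k-1} - a$ sets containing $1$ are missing from $\cF(1)$, so each $G \in \cF \setminus \cF(1)$ is disjoint from at least $\binom{n-k-1}{k-1} - (\binom{n-1}{k-1}-a)$ members of $\cF(1)$. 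Since $a \geq s/\ell$ is a sizable fraction of $\binom{n-1}{k-1}$ in the inductive range, this bound is positive. Applying the identical decomposition to $\cL_{n,k}(s)$ (where $\cL_{n,k}(s)(1)$ is a full star, so $a = \binom{n-1}{k-1}$) and comparing term-by-term reduces the claim to a monotonicity inequality in $a$ that holds comfortably for large $n$.

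The principal obstacle is the cover-size bound in the second paragraph: matching a strong enough lower bound for $\disj(\cF)$ when $\tau(\cF) > \ell$ against the explicit upper bound for $\disj(\cL_{n,k}(s))$ is exactly where the largeness hypothesis $n > 108k^2\ell(k+\ell)$ is consumed, and this is the main technical content of the proof. A secondary subtlety is the final term-by-term comparison, where one must verify that shrinking the full star at $1$ never strictly decreases the total disjoint-pair count once the increased pairs in $\cF \setminus \cF(1)$ are accounted for; this is a clean parametric check but technical in the constants.
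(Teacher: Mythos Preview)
Your overall architecture --- bound the cover, peel off a star, recurse --- is in the right spirit, but the recursion step has a genuine gap. You peel off the \emph{largest} star $\cF(1)$, which need not be full, and then attempt a term-by-term comparison with $\cL_{n,k}(s)$ via a ``monotonicity check in $a$''. This check fails. Write $b = \binom{n-1}{k-1} - a$ for the deficit of the star at $1$, and take $\ell = r = 2$ so that $s = \binom{n-1}{k-1} + m$ with $0 < m \le \binom{n-2}{k-1}$. Your lower bound becomes
\[
\disj(\cF) \ \ge\ \disj\bigl(\cL_{n-1,k}(m+b)\bigr) + (m+b)\Bigl[\tbinom{n-k-1}{k-1} - b\Bigr]^+ \;=\; (m+b)\Bigl[\tbinom{n-k-1}{k-1} - b\Bigr]^+,
\]
since $\cL_{n-1,k}(m+b)$ is still intersecting. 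The target is $\disj(\cL_{n,k}(s)) = m\binom{n-k-1}{k-1}$, and the difference is $b\bigl[\binom{n-k-1}{k-1} - m - b\bigr]$. For $m$ close to $\binom{n-2}{k-1}$ and any $b \ge 1$ this is negative, so your bound does \emph{not} dominate $\disj(\cL_{n,k}(s))$. The cross-term estimate $(s-a)\bigl[\binom{n-k-1}{k-1} - b\bigr]$ is simply too crude once the peeled star is not full; the missing $b$ sets from $\cF(1)$ cost you roughly $b(s-a)$ disjoint pairs, which swamps the gain of $\sim b\binom{n-k-1}{k-1}$ extra pairs inside $\cF\setminus\cF(1)$.

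The paper avoids this entirely: it only peels off a \emph{full} star (where the cross term is exact, not an inequality), and when no full star exists it does \emph{not} recurse at all. Instead, after establishing the cover $[r]$, it runs a complementary argument inside the union $\cA$ of $r$ full stars: writing $\cG = \cA \setminus \cF$, one has $\disj(\cF) = \disj(\cA) - \disj(\cG,\cA) + \disj(\cG)$, and each term is separately optimized by the lexicographic choice (all missing sets sitting in a single star). This complementary trick is the key idea your plan is missing. A smaller point: your stated upper bound $\disj(\cL_{n,k}(s)) \le \binom{\ell}{2}\binom{n-2}{k-2}\binom{n-k-1}{k-1}$ is off by roughly a factor $n/k$; the correct convexity bound is $\disj(\cL_{n,k}(s)) \le \tfrac12(1-1/r)s^2$, and the paper's cover-size argument needs several successive refinements (from $6kr$ to $6r$ to $r$) rather than a single greedy step.
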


\setcounter{section}{2}
\setcounter{THM}{1}

In our notation, the above theorem gives $\disj(n,k,s) = \disj(\cL_{n,k}(s))$ for such values of $s$.  Let $1 \le r \le \ell$ be such that $\binom{n}{k} - \binom{n-r+1}{k} < s \le \binom{n}{k} - \binom{n-r}{k}$.  Since $\cL_{n,k}(s)$ contains all sets meeting $\{1,2,\hdots,r-1\}$, and all the remaining sets contain $r$, we can provide an explicit formula for the minimum number of disjoint pairs.
\begin{align*}
	\disj(n,k,s) = \disj(\cL_{n,k}(s)) &= \sum_{i=1}^r \sum_{\substack{F \in \cL_{n,k}(s) \\ \min F = i}} \left| \{ G \in \cL_{n,k}(s): \min G < i, F \cap G = \emptyset \} \right| \\
	&= \sum_{i=2}^{r-1} \binom{n-i}{k-1} \sum_{j=1}^{i-1} \binom{n-j-k}{k-1} + \left( s - \sum_{i=1}^{r-1} \binom{n-i}{k-1} \right) \sum_{j=1}^{r-1} \binom{n-j-k}{k-1}.
\end{align*}

It will be useful to have a simpler upper bound on $\disj(n,k,s)$.  Note that we can assign each set in $\cL_{n,k}(s)$ to an element of $[r]$ it contains.  It can then only be disjoint from sets assigned to different elements.  In the worst case, an equal number of sets is assigned to each element, giving the bound
\begin{equation} \label{ineq:upbound}
\disj(n,k,s) \le \binom{r}{2} \left( \frac{s}{r} \right)^2 = \frac12 \left( 1 - \frac{1}{r} \right) s^2.
\end{equation}

We shall often require bounds on $\binom{n-2}{k-2}$ in terms of $s$.  Since $\cL_{n,k}(s)$ contains all sets meeting $[r-1]$ and $n$ is large, the Bonferroni Inequalities give
\begin{align} \label{ineq:lowerorder}
	s = |\cL_{n,k}(s)| &\ge (r-1) \binom{n-1}{k-1} - \binom{r-1}{2} \binom{n-2}{k-2} \notag \\
	&= \left( \frac{(r-1)(n-1)}{k-1} - \binom{r-1}{2} \right) \binom{n-2}{k-2}	\ge \frac{rn}{3k} \binom{n-2}{k-2}.
\end{align}

Our proof of Theorem \ref{thm:disjpairs} will proceed according to the following steps.  First we shall argue that if a family $\cF$ has at most $\frac{1}{2} \left(1 - \frac{1}{r} \right) s^2$ disjoint pairs, then it must contain a popular element; that is, some $x \in [n]$ contained in many sets of $\cF$.  The second step consists of a series of arguments to show that $\cF$ can be covered by $r$ elements.  The final step will then show that among all families that can be covered by $r$ elements, $\cL_{n,k}(s)$ minimizes the number of disjoint pairs.

\begin{proof}[Proof of Theorem \ref{thm:disjpairs}]
We prove the theorem by induction on $n$ and $s$.  For the base case, suppose $0 \le s \le \binom{n}{k} - \binom{n-1}{k} = \binom{n-1}{k-1}$.  In this range, $\cL_{n,k}(s)$ is a star, consisting only of sets containing $1$, and thus obviously minimizes the number of disjoint pairs.

\medskip

For the induction step, let $\cF$ be an extremal family with $|\cF| = s > \binom{n-1}{k-1}$, and so $r \ge 2$.  Suppose first that $\cF$ contains a full star.  Without loss of generality, we may assume $\cF$ has all sets containing $1$, so $\cF(1) = \left\{ F \in \binom{[n]}{k}: 1 \in F \right\}$.  Since $\cF(1)$ is an intersecting family, we have $\disj(\cF) = \disj \left( \cF(1), \cF \setminus \cF(1) \right) + \disj \left( \cF \setminus \cF(1) \right)$.  Now any set $F \in \cF$ with $1 \notin F$ is disjoint from exactly $\binom{n-k-1}{k-1}$ sets in $\cF(1)$, giving $\disj\left( \cF(1), \cF \setminus \cF(1) \right) = \left| \cF \setminus \cF(1) \right| \binom{n-k-1}{k-1} = \left( s - \binom{n-1}{k-1} \right) \binom{n-k-1}{k-1}$, regardless of the structure of $\cF \setminus \cF(1)$.  Since $\cF \setminus \cF(1)$ is a family of $s - \binom{n-1}{k-1}$ sets in $[n] \setminus \{1\}$, our induction hypothesis implies $\disj \left(\cF \setminus \cF(1) \right)$ is minimized by the initial segment of the lexicographical order.  Since $\cL_{n,k}(s)$ consists of all sets containing $1$, and the initial segment of the lexicographical order on $[n] \setminus \{1\}$, it follows that $\cL_{n,k}(s)$ is optimal, as claimed.

\medskip

Hence we may assume that $\cF$ does not contain any full star.  Consequently, given any $F \in \cF$ and $x \in [n]$, we may replace $F$ by a set containing $x$.

\medskip

\noindent \emph{Step 1:}  Show there exists some $x \in [n]$ with $|\cF(x)| \ge \frac{s}{3r}$.

\medskip

We begin by showing there cannot be too many moderately popular elements.

\begin{CLAIM} \label{clm:smallset}
$\left| \{ x : |\cF(x)| \ge \frac{s}{3kr} \} \right| < 6kr$.
\end{CLAIM}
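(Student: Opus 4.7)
The plan is to argue by contradiction: suppose $|T| \ge 6kr$ and deduce a contradiction from a double count of $\sum_{\{x,y\} \subseteq T} |\cF(x,y)|$, the total number of pairs of popular elements co-occurring in sets of $\cF$. Write $S_T := \sum_{x \in T} |\cF(x)| = \sum_{F \in \cF} |F \cap T|$. From the definition of $T$ and the contrary assumption,
\[
S_T \;\ge\; |T| \cdot \frac{s}{3kr} \;\ge\; 2s.
\]

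For the lower bound, I would apply convexity of $\binom{\cdot}{2}$ (equivalently Cauchy--Schwarz) to the values $|F\cap T|$, using $\sum_F |F\cap T|=S_T$ over $s$ sets:
\[
\sum_{\{x,y\}\subseteq T}|\cF(x,y)| \;=\; \sum_{F\in \cF} \binom{|F\cap T|}{2} \;\ge\; s\binom{S_T/s}{2} \;=\; \frac{S_T(S_T-s)}{2s}.
\]
Since $S_T\ge 2s$ we have $S_T-s\ge S_T/2$, so the right-hand side is at least $S_T^2/(4s)$; substituting $S_T\ge |T|s/(3kr)$ then yields the clean quadratic estimate
\[
\sum_{\{x,y\}\subseteq T}|\cF(x,y)| \;\ge\; \frac{|T|^2\, s}{36\,k^2 r^2}.
\]

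For the upper bound, the trivial fact $|\cF(x,y)|\le \binom{n-2}{k-2}$ together with inequality (\ref{ineq:lowerorder}), which gives $\binom{n-2}{k-2}\le 3ks/(rn)$, yields
\[
\sum_{\{x,y\}\subseteq T}|\cF(x,y)| \;\le\; \binom{|T|}{2}\binom{n-2}{k-2} \;\le\; \frac{3k\,|T|^2\,s}{2rn}.
\]
Comparing the two bounds, the common factor $|T|^2 s$ cancels, leaving the necessary condition $n \le 54\,k^3 r$. But $r\le \ell$ and $n>108\,k^2\,\ell(k+\ell) \ge 108\,k^3\ell\ge 108\,k^3 r$, contradicting $n\le 54\,k^3 r$; hence no configuration with $|T|\ge 6kr$ is possible.

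The conceptual heart is the switch-of-summation identity $\sum_F \binom{|F\cap T|}{2}=\sum_{\{x,y\}\subseteq T}|\cF(x,y)|$, which couples the first moment $S_T$ to the second moment of pair co-occurrences. I expect the only subtle point to be the numerical calibration: the thresholds $s/(3kr)$ and $6kr$ are tuned precisely so that, after applying convexity and the estimate on $\binom{n-2}{k-2}$, both sides of the decisive inequality are proportional to $|T|^2 s$; the assumption $n>108\,k^2\,\ell(k+\ell)$ is then just strong enough to break the resulting scalar inequality. Notably, extremality of $\cF$ is never used for this claim --- the argument depends only on $|\cF|=s$ and the structural bound (\ref{ineq:lowerorder}).
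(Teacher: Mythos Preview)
Your proof is correct and follows essentially the same route as the paper: assume the set of popular elements is too large, lower-bound $\sum_{\{x,y\}\subset T}|\cF(x)\cap\cF(y)|$ using the popularity assumption, upper-bound it by $\binom{|T|}{2}\binom{n-2}{k-2}$ together with \eqref{ineq:lowerorder}, and derive $n\le 54k^3r$, contradicting the hypothesis on $n$. The only cosmetic difference is that the paper obtains the lower bound via the Bonferroni inequality $s\ge|\cup_x\cF(x)|\ge\sum_x|\cF(x)|-\sum_{x<y}|\cF(x)\cap\cF(y)|$ (applied to a subset $X$ of size exactly $6kr$), whereas you use Jensen on $\binom{\cdot}{2}$; both yield the same numerical threshold.
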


\begin{proof}
Suppose not, and consider $X \subset \{ x : |\cF(x)| \ge \frac{s}{3kr} \}$ with $|X| = 6kr$.  Using \eqref{ineq:lowerorder}, we have
\begin{align*}
	s = | \cF | \ge \left| \cup_{x \in X} \cF(x) \right| &\ge \sum_{x \in X} | \cF(x) | - \sum_{x,y \in X} | \cF(x) \cap \cF(y) | \\
	&\ge |X| \cdot \frac{s}{3kr} - \binom{|X|}{2} \binom{n-2}{k-2} \\
	&\ge 2s - 18k^2r^2 \cdot \frac{3k}{nr} s = \left( 2 - \frac{54k^3r}{n} \right) s.
\end{align*}
Since $n > 54 k^3 r$, we reach a contradiction.
\end{proof}

We now show the existence of a popular element.

\begin{CLAIM} \label{clm:popelem}
There is some $x \in [n]$ with $|\cF(x)| > \frac{s}{3r}$.
\end{CLAIM}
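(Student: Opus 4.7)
The plan is to proceed by contradiction: assume $|\cF(x)| \le s/(3r)$ for every $x \in [n]$, and combine the extremality of $\cF$ with Claim~\ref{clm:smallset} to force $n < 54 k^3 r$, which contradicts the hypothesis $n > 108 k^2 \ell(k+\ell) \ge 54 k^3 r$ (valid since $r \le \ell$ and $k+\ell \ge k$). The main bookkeeping device is the identity
\[
  T := \sum_{x \in [n]} |\cF(x)|^2 = \sum_{(F,G) \in \cF^2} |F \cap G|,
\]
which links the degree sequence of $\cF$ to the number of intersecting pairs.

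First, extremality combined with \eqref{ineq:upbound} yields $T \ge s^2 - 2\disj(\cF) \ge s^2/r$, since each ordered intersecting pair contributes at least $1$ to $T$. Next I would localize this mass on the set $X$ furnished by Claim~\ref{clm:smallset}. Since $|\cF(x)| < s/(3kr)$ for $x \notin X$,
\[
  T_{X^c} := \sum_{x \notin X} |\cF(x)|^2 \le \frac{s}{3kr} \sum_x |\cF(x)| = \frac{s^2}{3r},
\]
so $T_X := \sum_{x \in X} |\cF(x)|^2 > 2s^2/(3r)$. Invoking the contradiction hypothesis in the form $T_X \le (s/(3r)) \sum_{x \in X} |\cF(x)|$ forces $\sum_{x \in X} |\cF(x)| = \sum_{F \in \cF} |F \cap X| > 2s$, so the average of $|F \cap X|$ over $F \in \cF$ exceeds $2$.

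Finally, I would boost this first-moment bound by convexity. Since $j \mapsto \binom{j}{2}$ is convex, Jensen's inequality yields $\sum_F \binom{|F \cap X|}{2} > s \binom{2}{2} = s$. On the other hand, this same quantity equals $\sum_{\{x,y\} \subseteq X} |\cF(x) \cap \cF(y)| \le \binom{|X|}{2} \binom{n-2}{k-2}$; combining $|X| < 6kr$ (from Claim~\ref{clm:smallset}) with $\binom{n-2}{k-2} \le 3ks/(rn)$ (from \eqref{ineq:lowerorder}), this upper bound is strictly less than $54 k^3 r s / n$. Rearranging produces $n < 54 k^3 r$, the desired contradiction.

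The main obstacle is that the naive bound $T \le (\max_x |\cF(x)|) \cdot ks$ undershoots the target by a factor of $k/3$ once $k \ge 3$, so Claim~\ref{clm:popelem} cannot be extracted directly from the degree-sum argument that worked for Claim~\ref{clm:smallset}. The essential new ingredient is to pass from first moments of $|F \cap X|$ to co-degrees via Jensen's inequality, producing a quantity that can be pitted against the smallness of $\binom{n-2}{k-2}$ enforced by the hypothesis on $n$.
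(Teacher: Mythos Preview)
Your proof is correct and follows essentially the same route as the paper: both establish $\sum_x |\cF(x)|^2 \ge s^2/r$ from extremality, bound the contribution off the set $X$ of Claim~\ref{clm:smallset} by $s^2/(3r)$, and then control $\sum_{x\in X}|\cF(x)|$ using the co-degree estimate $\sum_{\{x,y\}\subset X}|\cF(x)\cap\cF(y)|\le \binom{|X|}{2}\binom{n-2}{k-2}<s$.

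The only real difference is cosmetic. Where you invoke Jensen on $\binom{|F\cap X|}{2}$ to turn the first-moment bound $\sum_{x\in X}|\cF(x)|>2s$ into a contradiction, the paper simply applies the Bonferroni inequality
\[
\sum_{x\in X}|\cF(x)| \;\le\; \bigl|\textstyle\bigcup_{x\in X}\cF(x)\bigr| \;+\; \sum_{\{x,y\}\subset X}|\cF(x)\cap\cF(y)| \;\le\; s + \binom{|X|}{2}\binom{n-2}{k-2} \;<\; 2s
\]
directly, and then reads off $|\cF(1)|\ge s/(3r)$ without contradiction. Since $\sum_F\binom{|F\cap X|}{2}=\sum_{\{x,y\}\subset X}|\cF(x)\cap\cF(y)|$, your Jensen step and the paper's Bonferroni step are two packagings of the same co-degree estimate; the paper's version is a line shorter and avoids the detour through convexity that you flag as ``the essential new ingredient.''
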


\begin{proof}
Since $\cF$ is extremal, we must have $\frac12 \left(1 - \frac{1}{r}\right) s^2 \ge \disj(\cL_{n,k}(s)) \ge \disj(\cF) = \frac12 \sum_{F \in \cF} \disj(F, \cF)$.

Now $\disj(F,\cF) = s - \left| \cup_{x \in F} \cF(x) \right| \ge s - \sum_{x \in F} \left| \cF(x) \right|$, and so we have
\[ \left(1 - \frac{1}{r} \right)s^2 \ge \sum_{F \in \cF} \left( s - \sum_{x \in F} \left| \cF(x) \right| \right) = s^2 - \sum_{F \in \cF} \sum_{x \in F} \left| \cF(x) \right| = s^2 - \sum_x \left| \cF(x) \right|^2. \]

Let $X = \{ x : | \cF(x) | \ge \frac{s}{3kr} \}$, and note that by the previous claim, $|X| < 6kr$.  Moreover, without loss of generality, suppose $1$ is the most popular element, so $|\cF(x)| \le |\cF(1)|$ for all $x$.  We split the above sum into those $x \in X$ and those $x \notin X$, giving
\begin{equation} \label{ineq:popelem}
	\frac{s^2}{r} \le \sum_{x \in X} | \cF(x) |^2 + \sum_{x \notin X} | \cF(x) |^2 \le |\cF(1)| \sum_{x \in X} |\cF(x)| + \frac{s}{3kr} \sum_{x \notin X} | \cF(x) |.
\end{equation}

We bound the first sum by noting that
\[ \sum_{x \in X} | \cF(x) | \le \left| \cup_{x \in X} \cF(x) \right| + \sum_{\{x,y\} \subset X} \left| \cF(x) \cap \cF(y) \right| \le s + \binom{|X|}{2} \binom{n-2}{k-2} \le \left( 1 + \frac{54k^3r}{n} \right) s \le 2s, \]
using \eqref{ineq:lowerorder} and our bound on $n$.  The second sum is bounded by $\sum_{x \notin X} | \cF(x) | \le \sum_x | \cF(x) | = ks$.  Substituting these bounds in \eqref{ineq:popelem} gives $\frac{s^2}{r} \le 2 | \cF(1) | s + \frac{s^2}{3r}$, and so $|\cF(1)| \ge \frac{s}{3r}$, as required.
\end{proof}

This concludes Step 1.

\medskip

\noindent \emph{Step 2:}  Show there is a cover of size $r$.

\medskip

We begin by using the existence of a popular element to argue that there is a reasonably small cover, and then provide a number of claims that together imply an extremal family must in fact be covered by $r$ elements.

\begin{CLAIM} \label{clm:medcover}
$X = \{ x : | \cF(x) | \ge \frac{s}{3kr} \}$ is a cover for $\cF$.
\end{CLAIM}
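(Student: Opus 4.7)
The plan is to argue by contradiction via a single-set swap. Suppose some $F \in \cF$ avoids $X$ entirely; then every $x \in F$ satisfies $|\cF(x)| < s/(3kr)$, so the union bound yields
\[
	\left| \bigcup_{x \in F} \cF(x) \right| \le \sum_{x \in F}|\cF(x)| < \frac{s}{3r},
\]
and consequently $\disj(F, \cF \setminus \{F\}) = s - |\cup_{x \in F} \cF(x)| > s - s/(3r)$.

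Next I would invoke Claim 2.2 to select a popular element, which I may relabel as $1$, satisfying $|\cF(1)| \ge s/(3r)$. Since $s/(3r) > s/(3kr)$ this element lies in $X$, so $1 \notin F$. Having already reduced to the case that $\cF$ contains no full star, I can pick a $k$-set $F' \notin \cF$ with $1 \in F'$ and form the swapped family $\cF' = (\cF \setminus \{F\}) \cup \{F'\}$. Every set of $\cF(1)$ meets $F'$ and $F \notin \cF(1)$, giving
\[
	\disj(F', \cF \setminus \{F\}) \le (s-1) - |\cF(1)| \le s - 1 - \frac{s}{3r}.
\]
Comparing the two counts shows $\disj(\cF') < \disj(\cF)$, contradicting the extremality of $\cF$ and hence forcing $X$ to be a cover.

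The hard part is conceptual rather than technical: the thresholds in Claim 2.2 and in the definition of $X$ have been engineered so that the popular element $1$ dominates the combined contribution of any $k$ would-be uncovered elements by a factor of $k$, which is exactly the slack needed to make a one-set swap strictly profitable. Once this calibration is recognised, the estimate is essentially mechanical, and every ingredient — the reduction to the no-full-star case, the existence of a popular element, and the degree cap outside $X$ — has been supplied by earlier parts of the proof. I should also remark that the only use of the hypothesis $n > 108 k^2 \ell(k+\ell)$ here is implicit, coming through Claim 2.2; no further lower bound on $n$ is needed for the swap itself to work.
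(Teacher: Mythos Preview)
Your proof is correct and follows essentially the same swap argument as the paper: assume some $F$ avoids $X$, bound $\disj(F,\cF)$ from below via the union bound, use the popular element from Claim~\ref{clm:popelem} to find a replacement set with strictly fewer disjoint pairs, and contradict extremality. The only cosmetic difference is that you track the $-1$ from removing $F$ explicitly, whereas the paper compares $\disj(G,\cF)$ directly to $\disj(F,\cF)$ and relies on the strict inequality for slack.
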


\begin{proof}
Suppose for contradiction $X$ is not a cover.  Then there must be some set $F \in \cF$ with $F \cap X = \emptyset$, and so $\card{\cF(x)} < \frac{s}{3kr}$ for all $x \in F$.  Hence $\disj(F,\cF) = s - \card{ \cup_{x \in F} \cF(x) } \ge s - \sum_{x \in F} \card{\cF(x)} > s - \frac{s}{3r}$.  On the other hand, by Claim \ref{clm:popelem}, we may assume $\card{\cF(1)} \ge \frac{s}{3r}$.  Thus if $G$ is any set containing $1$, we have $\disj(G,\cF) \le s - \card{\cF(1)} = s - \frac{s}{3r}$.  Hence replacing $F$ with such a set $G$, which is possible since $\cF(1)$ is not a full star, would decrease the number of disjoint pairs in $\cF$, contradicting its optimality.

Hence $X$ must be a cover for $\cF$, as claimed.
\end{proof}

By Claim \ref{clm:smallset}, we have $|X| \le 6kr$.  Take a minimal subcover of $X$ containing $1$; without loss of generality, we may assume this subcover is $[m]$, for some $r \le m \le 6kr$.  We shall now proceed to show that an extremal family must have $m = r$, giving rise to the smallest possible cover.

Rather than working with the subfamilies $\cF(i)$, $i \in [m]$, we shall avoid double-counting by instead considering the subsystems $\cF^*(i) = \{ F \in \cF: \min F = i \}$.  Note that the systems $\cF^*(i)$ partition $\cF$.

\begin{CLAIM} \label{clm:almostequal}
For every $i,j \in [m]$, we have $| \cF^*(i) | \ge | \cF^*(j) | - \frac{3mk^2}{rn}s$.
\end{CLAIM}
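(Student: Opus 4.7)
The plan is to argue by contradiction via an exchange argument exploiting the extremality of $\cF$. Suppose for contradiction that $a_j := |\cF^*(j)| > a_i + \Delta$ with $\Delta := \frac{3mk^2}{rn}s$ for some $i, j \in [m]$. By relabeling we may assume $i < j$; the case $i > j$ is analogous with an additional avoidance condition on $F$ (requiring $F \cap \{j+1, \ldots, i\} = \emptyset$) to guarantee that the swap lands in the right stratum, contributing only an $O(m\binom{n-2}{k-2})$ error term that is absorbed into $\Delta$.

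For each $F \in \cF^*(j)$, since $\min F = j > i$ we have $i \notin F$ and $F \setminus \{j\} \subseteq \{j+1, \ldots, n\}$, so $G(F) := (F \setminus \{j\}) \cup \{i\}$ is a $k$-set with $\min G(F) = i$, and the map $F \mapsto G(F)$ is injective. If $G(F) \in \cF$ then $G(F) \in \cF^*(i)$, so the collection $V := \{F \in \cF^*(j) : G(F) \notin \cF\}$ of valid swap candidates satisfies $|V| \ge a_j - a_i > \Delta$. For each valid $F$, since $F \cap G(F) = F \setminus \{j\}$ is nonempty, extremality of $\cF$ forces $\disj(G(F), \cF) \ge \disj(F, \cF)$.

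Setting $A(F) := |\{H \in \cF(i) : H \cap F = \emptyset\}|$ and $B(F) := |\{H \in \cF(j) : H \cap G(F) = \emptyset\}|$, a direct computation (noting $F \setminus G(F) = \{j\}$ and $G(F) \setminus F = \{i\}$) yields $\disj(F,\cF) - \disj(G(F),\cF) = A(F) - B(F) \le 0$ for each $F \in V$. Summing over $F \in V$, the bound $|\cF(i) \cap \cF(x)| \le \binom{n-2}{k-2}$ for $x \ne i$ gives $\sum_{F \in V} A(F) \ge |V|(a_i - k\binom{n-2}{k-2})$. For the upper bound on $\sum_{F \in V} B(F)$, we swap the order of summation: for each $H \in \cF(j)$ with $i \notin H$, the condition $H \cap G(F) = \emptyset$ becomes $H \cap F = \{j\}$, and one counts the resulting pairs $(F,H)$ via the bijection $F \leftrightarrow G(F) \in \cF^*(i)$ together with minimal-cover error terms to obtain an upper bound of $|V|(a_i + O(mk\binom{n-2}{k-2}))$. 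Combining both estimates and invoking \eqref{ineq:lowerorder} (which gives $\binom{n-2}{k-2} \le \frac{3ks}{rn}$) yields $a_j - a_i \le O(mk^2 s/(rn)) \le \Delta$, a contradiction.

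The main obstacle is the upper bound on $\sum_{F \in V} B(F)$: each individual $B(F)$ may a priori be as large as $|\cF(j)| \gg a_i$, so the argument must exploit the swap-bijection and the minimal-cover structure to achieve the amortized bound of $a_i + O(mk\binom{n-2}{k-2})$ per valid $F$. The hypothesis $n > 108k^2\ell(k+\ell)$ ensures all error terms remain strictly smaller than $\Delta$.
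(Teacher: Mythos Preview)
Your exchange argument does not close, for two reasons.

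First, the upper bound $\sum_{F\in V} B(F)\le |V|\bigl(a_i+O(mk\tbinom{n-2}{k-2})\bigr)$ is unjustified. You invoke ``the bijection $F\leftrightarrow G(F)\in\cF^*(i)$'', but by definition of $V$ we have $G(F)\notin\cF$ for every $F\in V$, so $G(F)\notin\cF^*(i)$ and there is no such bijection. A priori each $B(F)=|\{H\in\cF(j):H\cap G(F)=\emptyset\}|$ can be of order $|\cF(j)|\approx a_j$, not $a_i$; nothing in the ``minimal-cover error terms'' forces the amortised bound you claim.

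Second, even granting your two bounds, the conclusion is a non sequitur. From $A(F)\le B(F)$ for each $F\in V$ you obtain
\[
|V|\Bigl(a_i-k\tbinom{n-2}{k-2}\Bigr)\le \sum_{F\in V}A(F)\le\sum_{F\in V}B(F)\le |V|\Bigl(a_i+O\bigl(mk\tbinom{n-2}{k-2}\bigr)\Bigr),
\]
which simplifies to $-k\tbinom{n-2}{k-2}\le O(mk\tbinom{n-2}{k-2})$, a triviality that says nothing about $a_j-a_i$. The quantity $a_j$ has dropped out of both sides, so no contradiction with $a_j>a_i+\Delta$ can be extracted. Your setup uses $a_j>a_i+\Delta$ only to get $|V|>\Delta$, and then never exploits it.

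The paper's argument is far more direct and avoids the swap altogether. One picks a \emph{single} set $F\in\cF^*(i)$ with $F\cap[m]=\{i\}$ (such $F$ exists because $|\cF(i)|\ge \frac{s}{3kr}$ while only $m\tbinom{n-2}{k-2}<\frac{s}{3kr}$ sets in $\cF(i)$ meet $[m]\setminus\{i\}$). For this $F$, every set in $\cF^*(j')$ with $j'\neq i$ intersects $F$ in at most $k\tbinom{n-2}{k-2}$ ways, so $\disj(F,\cF)\ge s-|\cF^*(i)|-mk\tbinom{n-2}{k-2}$. On the other hand, replacing $F$ by any $G\ni j$ (possible since there is no full star) gives $\disj(G,\cF)\le s-|\cF^*(j)|$. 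Extremality forces $\disj(G,\cF)\ge\disj(F,\cF)$, and the claim follows immediately after invoking \eqref{ineq:lowerorder}. The key difference is that the paper compares $F$ against an \emph{arbitrary} new set containing $j$, rather than the specific swap $G(F)$, and bounds $\disj(F,\cF)$ and $\disj(G,\cF)$ separately rather than their difference.
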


\begin{proof}
First we claim that there is some $F \in \cF^*(i)$ with $F \cap [m] = \{i\}$.  Indeed, the number of sets in $\cF(i)$ intersecting another element in $[m]$ is less than $m\binom{n-2}{k-2} \le \frac{3mk}{rn} s \le \frac{18k^2}{n}s < \frac{s}{3kr}$.  However, since $[m]$ is a subcover of $X$ from Claim \ref{clm:medcover}, it follows that $|\cF(i)| \ge \frac{s}{3kr}$, and thus we must have our desired set $F \in \cF^*(i)$.

For any $j \in [m]\setminus \{i\}$, $F$ can intersect at most $k\binom{n-2}{k-2}$ sets in $\cF(j)$, since each of these sets must contain both $j$ and one element from $F$.  Summing over all $j$ and using \eqref{ineq:lowerorder} gives
\begin{align*}
	\disj(F, \cF) &\ge \sum_{j \neq i} \disj(F, \cF) \ge \sum_{j \neq i} \left[ | \cF^*(j) | - k \binom{n-2}{k-2} \right] \\
	&= s - |\cF^*(i)| - mk \binom{n-2}{k-2} \ge s - |\cF^*(i)| - \frac{3mk^2}{rn}s.
\end{align*}

On the other hand, if we were to replace $F$ with a set containing $j$, it would intersect at least those sets in $\cF^*(j)$, and so introduce at most $s - | \cF^*(j)|$ disjoint pairs.  Since $\cF$ is an extremal family, we must have $s - | \cF^*(j)| \ge s - | \cF^*(i)| - \frac{3mk^2}{rn} s$, or $|\cF^*(i)| \ge |\cF^*(j)| - \frac{3mk^2}{rn}s$, as required.
\end{proof}

\begin{CLAIM} \label{clm:smallcover}
$m \le 6r$.
\end{CLAIM}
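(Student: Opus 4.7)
The plan is to proceed by contradiction, assuming $m > 6r$. The underlying idea is that Claim~\ref{clm:almostequal} forces each $a_i := |\cF^*(i)|$ to be close to $s/m$, so if $m$ is too large the parts are spread so thin that the resulting number of disjoint pairs exceeds $\disj(\cL_{n,k}(s)) \le \frac{1}{2}(1 - 1/r)s^2$.

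First I would derive a lower bound on $\disj(\cF)$. For $F \in \cF^*(i)$ and $j \in [m] \setminus (F \cup \{i\})$, any $G \in \cF^*(j)$ meeting $F$ must contain $\{j,x\}$ for some $x \in F$, so there are at most $k\binom{n-2}{k-2}$ such $G$. This gives $\disj(F, \cF) \ge s - \sum_{j \in F \cap [m]} a_j - mk\binom{n-2}{k-2}$; summing over $F \in \cF$, using $\binom{n-2}{k-2} \le \frac{3k}{rn}s$ from~\eqref{ineq:lowerorder}, and absorbing the small slack $|\cF(j)| - a_j \le m\binom{n-2}{k-2}$ (which accounts for sets containing $j$ whose minimum is some smaller element of $[m]$) yields
\begin{equation*}
2\disj(\cF) \ge s^2 - \sum_{i \in [m]} a_i^2 - O(\epsilon_0 s^2),
\end{equation*}
where $\epsilon_0 = \frac{3mk^2}{rn}$. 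Claim~\ref{clm:almostequal} then supplies $|a_i - s/m| \le \epsilon_0 s$, and hence $\sum_i a_i^2 \le s^2/m + m\epsilon_0^2 s^2$. Combining with $2\disj(\cF) \le (1 - 1/r)s^2$ from~\eqref{ineq:upbound} and rearranging gives
\begin{equation*}
\frac{1}{m} + m\epsilon_0^2 + O(\epsilon_0) \ge \frac{1}{r}.
\end{equation*}

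Finally, supposing $m > 6r$, the bound $\frac{1}{m} < \frac{1}{6r}$ forces the error term to satisfy $m\epsilon_0^2 + O(\epsilon_0) > \frac{5}{6r}$. Substituting $\epsilon_0 = \frac{3mk^2}{rn}$ and the bound $m \le 6kr$ from Claim~\ref{clm:smallset}, this reduces to a polynomial inequality in $k, r, n$ that is incompatible with the hypothesis $n > 108 k^2 \ell(k+\ell)$, yielding the desired contradiction. I expect the main obstacle to lie in controlling the cubic-in-$m$ contribution $m\epsilon_0^2 \propto m^3 k^4/(r^2 n^2)$: the $n^2$ in its denominator, together with the full strength of the hypothesis $n \gtrsim k^2\ell(k+\ell)$ (rather than just $n \gtrsim k^3 r$), is what makes this error term small enough to close the gap between $\frac{1}{m}$ and $\frac{1}{r}$.
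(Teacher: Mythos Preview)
Your approach via disjoint-pair counting is essentially the machinery of Claim~\ref{clm:mincover}, not the argument the paper uses here. The paper's proof of Claim~\ref{clm:smallcover} is a two-line pigeonhole: from Claim~\ref{clm:popelem} one has $|\cF^*(1)| \ge s/(3r)$, and then Claim~\ref{clm:almostequal} with $j=1$ (together with $m \le 6kr$ and $n > 108k^3r$) gives $|\cF^*(i)| \ge s/(3r) - \frac{18k^3}{n}s \ge s/(6r)$ for every $i$. Since the $\cF^*(i)$ partition $\cF$, summing yields $s \ge m \cdot s/(6r)$, so $m \le 6r$. No estimate on $\disj(\cF)$ is needed at all.

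Your route can be made to work, but as written it has a genuine gap: the term $m\epsilon_0^2$ does \emph{not} vanish under the hypothesis $n > 108k^2\ell(k+\ell)$, contrary to your expectation. With $m \le 6kr$ one has $m\epsilon_0^2 = \frac{9m^3k^4}{r^2n^2} \le \frac{1944\,k^7 r}{n^2}$, and even using both factors $n > 108k^3r$ and $n > 108k^2r^2$ this only yields $m\epsilon_0^2 \lesssim \frac{k^3}{r(k+r)^2}$. For $k$ large relative to $r$ (e.g.\ $k=10$, $r=2$) this is of order $k/r$, far larger than the gap $\frac{5}{6r}$ you need to beat, so no contradiction follows. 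The $n^2$ denominator does not rescue you because $m^3$ scales like $k^3r^3$.

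The fix is simple: do not bound $\sum_i a_i^2$ via the variance identity. Since $\max_i a_i \le s/m + \epsilon_0 s$ and $\sum_i a_i = s$, you get directly $\sum_i a_i^2 \le (s/m + \epsilon_0 s)s$, replacing your $m\epsilon_0^2$ by a single $\epsilon_0$. Then the inequality reads $\frac{1}{m} + O(\epsilon_0) \ge \frac{1}{r}$ with $\epsilon_0 \le \frac{18k^3}{n} < \frac{1}{6r}$, which forces $m < 2r$ --- stronger than needed, and this is exactly how the paper proceeds in Claim~\ref{clm:mincover} once $m \le 6r$ is in hand.
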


\begin{proof}
We shall now bound $|\cF^*(i)|$ by taking $j = 1$ in Claim \ref{clm:almostequal}.  Recall that by Claim \ref{clm:popelem} we have $|\cF(1)| = |\cF^*(1)| \ge \frac{s}{3r}$, and from Claim \ref{clm:smallset} it follows that $m \le 6kr$.  Since $n > 108k^3r$, these bounds give
\[ | \cF^*(i) | \ge | \cF^*(1) | - \frac{3mk^2}{rn}s \ge \frac{s}{3r} - \frac{18k^3}{n} s \ge \frac{s}{6r}. \]
Since $s = \left| \cup_{i=1}^m \cF^*(i) \right| = \sum_{i=1}^m | \cF^*(i) | \ge m \cdot \frac{s}{6r}$, we must have $m \le 6r$.
\end{proof}

With this tighter bound on $m$, we are now able to better estimate the number of disjoint pairs in $\cF$, and in doing so show that we must actually have $m = r$ if $\cF$ is extremal.

\begin{CLAIM} \label{clm:mincover}
If $\cF$ minimizes the number of disjoint pairs, then $\cF$ can be covered by $r$ elements.
\end{CLAIM}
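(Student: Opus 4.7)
The plan is to prove the contrapositive: assume the minimal cover $[m]$ from Claim~\ref{clm:medcover} satisfies $m > r$, and derive a contradiction by showing $\disj(\cF) > \disj(\cL_{n,k}(s))$. By \eqref{ineq:upbound} we have $\disj(\cL_{n,k}(s)) \le \tfrac12(1 - 1/r)s^2$, so it suffices to prove $\disj(\cF) > \tfrac12(1 - 1/r)s^2$. The intuition is that Claim~\ref{clm:almostequal} spreads $\cF$ nearly evenly across $[m]$, forcing $\cF$ to carry roughly $\tfrac12(1 - 1/m)s^2$ disjoint pairs, a quantity that overshoots $\tfrac12(1 - 1/r)s^2$ as soon as $m \ge r+1$.

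I would carry this out using the partition $\cF = \bigsqcup_{i=1}^m \cF^*(i)$. For $F \in \cF^*(i)$ and $j \ne i$, a set $G \in \cF^*(j)$ meets $F$ only if $G$ contains $j$ together with one of the elements of $F$, so at most $k\binom{n-2}{k-2}$ sets of $\cF^*(j)$ intersect $F$. Summing over $j \ne i$,
\[ \disj(F, \cF) \;\ge\; (s - |\cF^*(i)|) - (m-1) k \binom{n-2}{k-2}, \]
and summing over $F$,
\[ 2\disj(\cF) \;\ge\; s^2 - \sum_{i=1}^m |\cF^*(i)|^2 - s(m-1) k \binom{n-2}{k-2}. \]
Claim~\ref{clm:almostequal} applied symmetrically gives $|\,|\cF^*(i)| - |\cF^*(j)|\,| \le \tfrac{3mk^2}{rn}s$ for all $i,j$, so $\max_i |\cF^*(i)| \le s/m + \tfrac{3mk^2}{rn}s$, and hence $\sum_i |\cF^*(i)|^2 \le s\cdot \max_i |\cF^*(i)| \le s^2/m + \tfrac{3mk^2}{rn}s^2$; meanwhile \eqref{ineq:lowerorder} rearranges to $\binom{n-2}{k-2} \le \tfrac{3k}{rn}s$, which controls the third term.

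Combining these estimates and using $m \le 6r$ from Claim~\ref{clm:smallcover}, one arrives at $\disj(\cF) \ge \tfrac12(1 - 1/m)s^2 - Ck^2 s^2/n$ for some absolute constant $C$. When $m \ge r+1$, one has $(1 - 1/m) - (1 - 1/r) = (m-r)/(rm) \ge 1/(6r^2) \ge 1/(6\ell^2)$, producing a gap of at least $s^2/(12\ell^2)$ above $\tfrac12(1 - 1/r)s^2$. The hypothesis $n > 108 k^2 \ell(k+\ell)$ is calibrated precisely so that this gap dominates the $Ck^2 s^2 / n$ error, giving $\disj(\cF) > \tfrac12(1 - 1/r)s^2 \ge \disj(\cL_{n,k}(s))$ and contradicting the extremality of $\cF$. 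Thus $m = r$, so $[m]$ is a cover of $\cF$ of size $r$. The principal obstacle is arithmetic rather than conceptual: one must calibrate the three error contributions (the slack in the upper bound on $\sum_i |\cF^*(i)|^2$, the intersection correction $(m-1)k\binom{n-2}{k-2}$, and the comparison between $1/m$ and $1/r$) so that each sits strictly below the gap made available by the explicit threshold on $n$.
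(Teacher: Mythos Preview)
Your approach is essentially the paper's: partition $\cF$ into the intersecting pieces $\cF^*(i)$, bound cross-intersections by $k\binom{n-2}{k-2}$, use Claim~\ref{clm:almostequal} to cap $\max_i|\cF^*(i)|$ near $s/m$, and deduce $\disj(\cF)\ge\tfrac12(1-1/m)s^2-O(k^2s^2/n)$, which beats $\tfrac12(1-1/r)s^2$ once $m\ge r+1$.

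There is one small slip. Your per-set bound ``at most $k\binom{n-2}{k-2}$ sets of $\cF^*(j)$ intersect $F$'' is only valid when $j\notin F$. Since $F\in\cF^*(i)$ merely says $\min F=i$, the set $F$ may contain some $j>i$ from $[m]$, and then \emph{every} $G\in\cF^*(j)$ meets $F$ through $j$. The paper avoids this by counting over ordered pairs $i<j$: for $F\in\cF^*(j)$ one has $i<\min F$, hence $i\notin F$, and the bound is legitimate. Carrying this through yields
\[
2\,\disj(\cF)\;\ge\;s^2-\sum_i|\cF^*(i)|^2-2(m-1)k\binom{n-2}{k-2}s,
\]
i.e.\ your displayed inequality with an extra factor $2$ in the error term, which is harmless and in fact recovers exactly the paper's $\disj(\cF)\ge\tfrac12\bigl(1-\tfrac1m-\tfrac{9mk^2}{rn}\bigr)s^2$.

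One arithmetic point: your gap estimate $1/r-1/m\ge 1/(6r^2)$ is wasteful and, with the resulting constant, does not quite sit under $n>108k^2\ell(k+\ell)$ when $\ell>k$. The minimum of $1/r-1/m$ over $m\ge r+1$ is $1/(r(r+1))$, attained at $m=r+1$; using this (as the paper does) the inequality $\tfrac{54k^2}{n}<\tfrac{1}{r(r+1)}$ follows directly from $n>108k^2\ell(k+\ell)\ge 54k^2r(r+1)$.
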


\begin{proof}
Since $\{ \cF^*(i) \}$ partitions $\cF$ into intersecting families, we have $\disj(\cF) = \sum_{i < j} \disj(\cF^*(i), \cF^*(j))$.  For $i < j$, note that every set $F \in \cF^*(j)$ 
can intersect at most $k \binom{n-2}{k-2}$ sets in $\cF^*(i)$, since those sets would have to contain one element from $F$ as well as $i$.  This shows that
$\disj(\cF^*(i), \cF^*(j)) \ge \left( 
|\cF^*(i)| - k \binom{n-2}{k-2} \right) | \cF^*(j)|$.  
Moreover, note that Claim \ref{clm:almostequal} implies the bound $|\cF^*(1)| \le \frac{s}{m} + \frac{3mk^2}{rn} s$, since we must have some $i \in [m]$ with $\card{\cF^*(i)} \le 
\frac{1}{m} \sum_{j=1}^m \card{\cF^*(j)} = \frac{s}{m}$, and $\card{\cF^*(i)} \ge \card{\cF^*(1)} - \frac{3mk^2}{rn} s$.  Thus

\begin{align*}
	\disj(\cF) &\ge \sum_{i < j} \left( | \cF^*(i)| - k\binom{n-2}{k-2} \right) | \cF^*(j) | \\
	&= \sum_{i < j} |\cF^*(i)||\cF^*(j)| - k \binom{n-2}{k-2} \sum_j (j-1) | \cF^*(j) | \\
	&\ge \frac12 \left( \left(\sum_i |\cF^*(i) | \right)^2 - \sum_i |\cF^*(i)|^2 \right) - mk \binom{n-2}{k-2} \sum_j | \cF^*(j)| \\
	&\ge \frac12 \left( s^2 - |\cF^*(1)| \sum_i |\cF^*(i)| \right) - mk \binom{n-2}{k-2} \sum_j | \cF^*(j)| \\
	&\ge \frac12 \left( s^2 - \left( \frac{s}{m} + \frac{3mk^2}{rn}s \right)s \right) - \frac{3mk^2}{rn} s^2 \\
	&= \frac12 \left(1 - \frac{1}{m} - \frac{9mk^2}{rn} \right) s^2.
\end{align*}

On the other hand, since $\cF$ is extremal, we have $\disj(\cF) \le \disj(\cL_{n,k}(s)) \le \frac12 \left(1 - \frac{1}{r} \right) s^2$, and so we must have $\frac{1}{r} \le \frac{1}{m} + \frac{9mk^2}{rn} \le \frac{1}{m} + \frac{54k^2}{n}$.  Since $n > 54k^2r(k+r) \ge 54k^2r(r+1)$, we have $\frac{54k^2}{n} < \frac{1}{r} - \frac{1}{r+1}$, and hence we require $m \le r$.  Thus $\cF$ can be covered by $r$ elements.
\end{proof}

This completes Step 2.

\medskip

\noindent \emph{Step 3:}  Show that $\cL_{n,k}(s)$ is optimal.

\medskip

We will now complete the induction argument by showing that $\cL_{n,k}(s)$ is indeed an extremal family.  From the preceding steps we know $\cF$ must be covered by $r$ elements, which we may assume to be $[r]$.  We shall now use a complementary argument to deduce the optimality of $\cL_{n,k}(s)$.

Let $\cA = \left\{ A \in \binom{[n]}{k} : A \cap [r] \neq \emptyset \right\}$ be all sets meeting $[r]$, so we have $\cF \subset \cA$.  Let $\cG = \cA \setminus \cF$.  We have
\[ \disj(\cF) = \disj(\cA) - \disj(\cG, \cA) + \disj(\cG), \]
since only disjoint pairs contained in $\cF$ survive on the right-hand side.

Since $\disj(\cA)$ is determined solely by $r$, and hence $s$, but is independent of the structure of $\cF$, we may treat that term as a constant.

We have $\disj(\cG,\cA) = \sum_{G \in \cG} \disj(G, \cA)$.  For any $G$, $\disj(G, \cA)$ is determined by $| G \cap [r] |$, and is maximized when $|G \cap [r]| = 1$.  For $\cF = \cL_{n,k}(s)$, we have $G \cap [r] = \{r\}$ for all $G \in \cG$, and so $\cL_{n,k}(s)$ maximizes $\disj(\cG,\cA)$.

Finally, we obviously have $\disj(\cG) \ge 0$, with equality in the case of $\cF = \cL_{n,k}(s)$.

\medskip

Hence it follows that $\cL_{n,k}(s)$ minimizes the number of disjoint pairs, as claimed.  This completes the proof.
\end{proof}

This proof also allows us to characterize all extremal systems.

\begin{PROP} \label{prop:structure}
Provided $n > 108k^2r(k+r)$ and $\binom{n}{k} - \binom{n-r+1}{k} \le s \le \binom{n}{k} - \binom{n-r}{k}$, then a set system $\cF \subset \binom{[n]}{k}$ of size $s$ minimizes the number of disjoint pairs if and only if it has one of the two following structures:
\begin{itemize}
	\item[(i)] $\cF$ contains $r-1$ full stars, with the remaining sets forming an intersecting system, or
	\item[(ii)] $\cF$ has a cover $X$ of size $r$, and if $\cG = \left\{ G \in \binom{[n]}{k} \setminus \cF : G \cap X \neq \emptyset \right\}$, then $\cG$ is intersecting, and $\card{G \cap X} = 1$ for all $G \in \cG$.
\end{itemize}
\end{PROP}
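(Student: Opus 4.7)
The plan is to extract the characterization from the proof of Theorem \ref{thm:disjpairs} by analyzing when its inequalities become equalities, and then verify both directions of the biconditional.

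\emph{Sufficiency.} I show that any $\cF$ of form (i) or (ii) achieves $\disj(\cL_{n,k}(s))$. For (i), write $\cF = \cS \cup \cH$, where $\cS$ is the union of $r-1$ full stars with center set $Y$, $|Y| = r-1$, and $\cH \subseteq \binom{[n] \setminus Y}{k}$ is intersecting. Then $\disj(\cH) = 0$, $\disj(\cS)$ depends only on $n,k,r$, and for every $H \in \cH$ the number of $S \in \cS$ disjoint from $H$ is the number of $k$-subsets of $[n] \setminus H$ meeting $Y$, namely $\binom{n-k}{k} - \binom{n-k-r+1}{k}$, which depends only on $|H| = k$. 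Thus $\disj(\cF)$ is a function of $s$ alone, and since $\cL_{n,k}(s)$ itself has form (i), this function equals $\disj(\cL_{n,k}(s))$. For (ii), invoke the complementary identity from Step~3 of the theorem,
\[ \disj(\cF) = \disj(\cA) - \disj(\cG, \cA) + \disj(\cG), \]
with $\cA = \{A \in \binom{[n]}{k} : A \cap X \neq \emptyset\}$; the hypotheses $|G \cap X|=1$ for all $G \in \cG$ and $\cG$ intersecting make each term agree with its value for $\cL_{n,k}(s)$.

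\emph{Necessity.} I induct on $r$, with base case $r=1$ (every intersecting family of size $\le \binom{n-1}{k-1}$ satisfies (i)). Let $\cF$ be extremal. If $\cF$ contains a full star, say $\cF(1)$, then as in the theorem's proof, $\cF \setminus \cF(1)$ is extremal on $[n] \setminus \{1\}$ with parameter $r-1$, so by induction it has form (i) or (ii). In the first case, combining with $\cF(1)$ yields form (i) for $\cF$ with $r-1$ full stars. In the second case, with cover $X' \subset [n] \setminus \{1\}$ of size $r-1$, take $X = \{1\} \cup X'$; since $\cF$ already contains every $k$-set through $1$, the missing family $\cG$ for $\cF$ relative to $X$ coincides with the corresponding $\cG'$ for $\cF \setminus \cF(1)$, and inherits both intersection and the $|G \cap X|=1$ property, so $\cF$ satisfies (ii). If $\cF$ contains no full star, then Step~2 of the theorem produces a cover $X$ with $|X| \le r$, while $|\cF| = s > \binom{n}{k} - \binom{n-r+1}{k}$ forces $|X| \ge r$ (else $\cF \subseteq \cA$ would be too small), so $|X|=r$. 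Applying the complementary identity and using extremality, both bounds $\disj(\cG) \ge 0$ and $\disj(\cG, \cA) \le |\cG|\bigl(\binom{n-k}{k} - \binom{n-k-r+1}{k}\bigr)$ must hold with equality, forcing $\cG$ intersecting and $|G \cap X|=1$ for every $G \in \cG$, which is form (ii).

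The key technical point is the "no full star" case: one must verify that $\disj(G, \cA)$ depends only on $|G \cap X|$ and is strictly decreasing in this parameter, so equality in the sum $\sum_{G \in \cG} \disj(G, \cA)$ individually forces each $G$ to attain $|G \cap X|=1$. The inductive lifting from (ii) on the reduced family to (ii) on $\cF$ is a clean verification, relying on the observation that enlarging the cover by a full-star center is transparent when the corresponding full star is already contained in $\cF$.
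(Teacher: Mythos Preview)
Your proof is correct and follows essentially the same route as the paper's: induction reducing to the no-full-star case, then invoking Claim~\ref{clm:mincover} for the size-$r$ cover and reading off the equality conditions from the complementary identity in Step~3. The paper's own proof is terser—it asserts without verification that ``adding a full star to either (i) or (ii) preserves the structure'' and does not explicitly argue sufficiency—so your version actually fills in details the paper leaves implicit, including the check that $\cG$ for $\cF$ coincides with $\cG'$ for $\cF\setminus\cF(1)$ when lifting case~(ii), and the observation that $s>\binom{n}{k}-\binom{n-r+1}{k}$ forces the cover size to be exactly $r$ in the no-full-star case.
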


\begin{proof}
We prove the proposition by induction on $n$ and $s$.  If $0 \le s \le \binom{n-1}{k-1}$, then clearly a system is extremal if and only if it is intersecting, as there need not be any disjoint pairs.  Since $r=1$ for this value of $s$, this is covered by case (i).

For the induction step, note that if $\cF$ is extremal and contains a full star, say $\cF(1)$, then $\cF \setminus \cF(1)$ must also be extremal.  Applying the induction hypothesis gives the result, since adding a full star to either (i) or (ii) preserves the structure.

Hence we may assume there is no full star.  Claim \ref{clm:mincover} then shows that $\cF$ has a cover of size $r$, while the complementary argument from Step 3 gives the above characterization of the system $\cG$.
\end{proof}

A similar complementary argument allows us to use Theorem \ref{thm:disjpairs} to also determine the extremal systems when $s = |\cF|$ is very large.

\begin{COR} \label{cor:largefam}
Provided $n > 108 k^2 \ell (k + \ell)$ and $\binom{n-\ell}{k} \le s \le \binom{n}{k}$, $\binom{[n]}{k} \setminus \cL_{n,k} \left( \binom{n}{k} - s \right)$ minimizes the number of disjoint pairs.
\end{COR}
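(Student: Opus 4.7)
The plan is to deduce Corollary \ref{cor:largefam} directly from Theorem \ref{thm:disjpairs} via a complementary counting identity. Given any $\cF \subset \binom{[n]}{k}$ of size $s$, write $\cF^c = \binom{[n]}{k} \setminus \cF$, so $|\cF^c| = \binom{n}{k} - s$. The total number of (unordered) disjoint pairs in $\binom{[n]}{k}$, namely $D = \tfrac{1}{2}\binom{n}{k}\binom{n-k}{k}$, decomposes as
\[ D = \disj(\cF) + \disj(\cF^c) + \disj(\cF,\cF^c). \]
Each $F \in \cF$ has exactly $\binom{n-k}{k}$ disjoint partners in $\binom{[n]}{k}$; summing over $F \in \cF$ counts each disjoint pair inside $\cF$ twice and each crossing pair once, so $\disj(\cF,\cF^c) = s\binom{n-k}{k} - 2\disj(\cF)$. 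Substituting into the previous identity yields
\[ \disj(\cF) - \disj(\cF^c) = s\binom{n-k}{k} - D. \]

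Since the right-hand side depends only on $s$, minimizing $\disj(\cF)$ over $k$-uniform families of size $s$ is equivalent to minimizing $\disj(\cF^c)$ over $k$-uniform families of size $\binom{n}{k}-s$. The hypothesis $\binom{n-\ell}{k} \le s \le \binom{n}{k}$ rewrites as $0 \le \binom{n}{k} - s \le \binom{n}{k} - \binom{n-\ell}{k}$, which is precisely the size range covered by Theorem \ref{thm:disjpairs}; moreover the numerical hypothesis $n > 108 k^2 \ell(k+\ell)$ is the same in both statements. Applying Theorem \ref{thm:disjpairs} to $\cF^c$ therefore shows that $\disj(\cF^c)$ is minimized by $\cL_{n,k}\!\left(\binom{n}{k} - s\right)$, and consequently $\disj(\cF)$ is minimized by $\cF = \binom{[n]}{k} \setminus \cL_{n,k}\!\left(\binom{n}{k} - s\right)$, as required.

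The argument is really just bookkeeping: the heavy lifting has already been done in Theorem \ref{thm:disjpairs}, so beyond verifying the counting identity and the range translation there is no genuine obstacle. One could also dualize Proposition \ref{prop:structure} in the same way to describe all extremal families in this regime, although that is not what the corollary claims.
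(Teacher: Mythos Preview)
Your proof is correct and follows essentially the same approach as the paper: both derive the identity $\disj(\cF) = \disj(\cF^c) + s\binom{n-k}{k} - D$ (the paper writes it as $\disj(\cF) = \bigl(s - \tfrac12\binom{n}{k}\bigr)\binom{n-k}{k} + \disj(\cG)$) and then apply Theorem~\ref{thm:disjpairs} to the complement. The only cosmetic difference is that the paper phrases the identity via $\disj(\cF) = \disj(\cA) - \disj(\cG,\cA) + \disj(\cG)$ as in Step~3 of the main proof, whereas you decompose $D$ directly.
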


\begin{proof}
Let $\cA = \binom{[n]}{k}$ be the collection of all $k$-sets in $[n]$, and let $\cF \subset \cA$ be any system of $s$ sets.  Write $\cG = \cA \setminus \cF$ for those sets not in $\cF$.  As in Step 3 of the proof of Theorem \ref{thm:disjpairs}, we have $\disj(\cF) = \disj(\cA) - \disj(\cG,\cA) + \disj(\cG)$.

Since every set is disjoint from $\binom{n-k}{k}$ other sets in $\cA$, we have $\disj(\cF) = \frac12 \binom{n}{k} \binom{n-k}{k} - |\cG| \binom{n-k}{k} + \disj(\cG) = \left( s - \frac12 \binom{n}{k} \right) \binom{n-k}{k} + \disj(\cG)$.  Hence it is apparent that $\cF$ minimizes the number of disjoint pairs over all systems of size $s$ if and only if its complement, $\cG$, minimizes the number of disjoint pairs over all systems of size $\binom{n}{k} - s$.

By Theorem \ref{thm:disjpairs}, we know the initial segment of the lexicographical order is optimal when $0 \le s \le \binom{n}{k} - \binom{n-\ell}{k}$, and so it follows that the complement of the lexicographical order (which is isomorphic to the colexicographical order) is optimal when $\binom{n-\ell}{k} \le s \le \binom{n}{k}$.
\end{proof}

\section{$q$-matchings} \label{sec:qmatchings}

In this section, we determine which set systems minimize the number of $q$-matchings.  This extends Theorem \ref{thm:disjpairs}, which is the case $q=2$.  Note that when $|\cF| = s \le \binom{n}{k} - \binom{n-q+1}{k}$, the lexicographical initial segment does not contain any $q$-matchings, as all sets meet $[q-1]$.  Indeed, this is known to be the largest such family when $n > (2q-1)k-q$, as proven by Frankl \cite{frankl13}.  We shall show that, provided $n$ is suitably large, $\cL_{n,k}(s)$ continues to be optimal for families of size up to $\binom{n}{k} - \binom{n - \ell}{k}$.  Unlike for Theorem \ref{thm:disjpairs}, we have made no attempt to optimize the dependence of $n$ on the other parameters.  We provide our calculations in asymptotic notation for ease of presentation, where we fix the parameters $k, \ell$ and $q$ to be constant and let $n \rightarrow \infty$.  However, our result should certainly hold for $n > C \ell^2 k^5 ( \ell^2 + k^2) e^{3q}$.

\medskip

Our proof strategy will be very similar to before: we will first find a popular element, deduce the existence of a smallest possible cover, and then use a complementary argument to show that the initial segment of the lexicographical order is optimal.  The main difference is in the definition of \emph{popular} - rather than considering how many sets contain the element $x$, we shall be concerned with how many $(q-1)$-matchings have a set containing $x$.  To this end, we introduce some new notation.  Given a set family $\cF$, and a set $F$, let $\cF^{(q)}(F)$ denote the number of $q$-matchings $\{F_1, F_2, \hdots, F_q\}$ in $\cF$ with $\cup_{i=1}^q F_i \cap F \neq \emptyset$.  Similarly, for some $x \in [n]$, we let $\cF^{(q)}(x) = \cF^{(q)}(\{x\})$ be the number of $q$-matchings with $x \in \cup_{i=1}^q F_i$.

\setcounter{section}{1}
\setcounter{THM}{6}

\begin{THM}
Provided $n > n_1(k,q,\ell)$ and $0 \le s \le \binom{n}{k} - \binom{n - \ell}{k}$, $\cL_{n,k}(s)$ minimizes the number of $q$-matchings among all systems of $s$ sets in $\binom{[n]}{k}$.
\end{THM}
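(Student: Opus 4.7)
The plan is to mirror the three-step structure from the proof of Theorem~\ref{thm:disjpairs}, inducting on $n$, $s$, and $q$. The base case $s \le \binom{n-1}{k-1}$ is immediate, since $\cL_{n,k}(s)$ is a star and hence $q$-matching-free for any $q \ge 2$. For the inductive step, if $\cF$ contains a full star $\cF(1)$, then every $q$-matching of $\cF$ uses at most one set from $\cF(1)$, and each $(q-1)$-matching $M$ of $\cF \setminus \cF(1)$ extends to a $q$-matching by precisely the $\binom{n-1-(q-1)k}{k-1}$ sets of $\cF(1)$ disjoint from $\bigcup M$. This gives the identity
\[
  \qdisj(\cF) = \qdisj(\cF \setminus \cF(1)) + \binom{n - 1 - (q-1)k}{k-1}\, \disj^{(q-1)}(\cF \setminus \cF(1)),
\]
and a combined induction on $n$ and $q$ (with Theorem~\ref{thm:disjpairs} supplying the $q=2$ base) shows that the lex initial segment on $[n] \setminus \{1\}$ minimizes both terms simultaneously, reducing the problem to families with no full star.

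For Step~1, I would locate a popular element $x$ with $|\cF(x)| \ge s/(Cr)$ for some constant $C = C(k,q)$. The analogue of Claim~\ref{clm:smallset}, that at most $O(kr)$ elements lie in $\Omega(s/(kr))$ sets of $\cF$, holds by the same union bound. To upgrade to a genuinely popular element, I would start from the identity
\[
  \qdisj(\cF) = \tfrac{1}{q} \sum_{F \in \cF} N_{q-1}(\cF, F),
\]
where $N_{q-1}(\cF, F)$ denotes the number of $(q-1)$-matchings in $\cF$ disjoint from $F$, and bound each $N_{q-1}(\cF, F)$ below by a sequential selection argument that subtracts, for each $x \in F$, a term proportional to $|\cF(x)|$ times a $(q-2)$-matching count. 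Comparing with the upper bound $\qdisj(\cL_{n,k}(s)) \le \binom{r}{q}(s/r)^q$---which follows because any $q$-matching of $\cL_{n,k}(s)$ draws its sets from $q$ distinct stars centered in $[r]$---will force $\sum_x |\cF(x)|^q$ to be large, so some element must be popular.

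Steps~2 and 3 should then go through by direct analogy. In Step~2, the popular elements form a cover of size $O(kr)$ via a verbatim adaptation of Claim~\ref{clm:medcover}; the swapping argument of Claims~\ref{clm:almostequal}--\ref{clm:smallcover} generalizes because replacing $F \in \cF^*(i)$ by a set of minimum $j$ alters $\qdisj$ by at most $k \binom{n-2}{k-2}$ times a $(q-2)$-matching count, and combining this with a lower bound on $\qdisj(\cF)$ in terms of the partition $\{\cF^*(i)\}$ forces the minimal cover to have size exactly $r$. For Step~3 I would write $\cF = \cA \setminus \cG$ with $\cA = \{A \in \binom{[n]}{k} : A \cap [r] \neq \emptyset\}$ and expand by inclusion--exclusion:
\[
  \qdisj(\cF) = \sum_{j=0}^{q} (-1)^j \sum_{\substack{M \subseteq \cG \\ M \text{ a } j\text{-matching}}} N(\cA, M),
\]
where $N(\cA, M)$ counts the $(q-j)$-matchings in $\cA$ disjoint from every set of $M$.

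The main obstacle will be controlling this alternating sum for general $q$. The $j=0$ term depends only on $r$, the $j=1$ term is dominant and is minimized precisely when each $G \in \cG$ meets $[r]$ in a single element (as happens for $\cL_{n,k}(s)$, whose $\cG$ lies inside the star centered at $r$ and is therefore intersecting), and the $j \ge 2$ contributions must be shown to be negligible. Fortunately, for $\cL_{n,k}(s)$ the $j \ge 2$ terms all vanish because $\cG$ is intersecting, while for any competitor each $j$th summand is of strictly lower order in $n$ than the gap the $j=1$ term opens between $\cL_{n,k}(s)$ and that competitor; taking $n$ sufficiently large in terms of $k$, $q$, and $\ell$ makes this dominance quantitative and pins down the threshold $n_1(k,q,\ell)$.
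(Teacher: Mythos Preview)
Your three-step scheme and induction match the paper's, and your handling of the full-star case is exactly right. However, two of your shortcuts do not go through as stated.

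First, in Step~1 you locate a popular element in terms of $|\cF(x)|$ and then assert that the cover argument of Claim~\ref{clm:medcover} is a ``verbatim adaptation''. It is not: the swap that certifies $X$ is a cover compares the number of $q$-matchings through $F$ with the number through a replacement set $G \ni 1$, and this comparison is governed by $|\cF^{(q-1)}(F)|$ versus $|\cF^{(q-1)}(1)|$, not by $|\cF(1)|$. Knowing only that $|\cF(1)| = \Omega(s/r)$ does not by itself force $|\cF^{(q-1)}(1)| = \Omega(s^{q-1})$, because you have no structure yet on $\cF \setminus \cF(1)$ from which to build the remaining $q-2$ sets of the matching. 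The paper sidesteps this chicken-and-egg by redefining ``popular'' at the level of $(q-1)$-matchings: from the identity $q\,\qdisj(\cF) = s\,\disj^{(q-1)}(\cF) - \sum_F |\cF^{(q-1)}(F)|$ one averages twice to get some $x$ with $|\cF^{(q-1)}(x)| = \Omega(s^{q-1})$, and then the cover and the equal-star claims (your Claims~\ref{clm:almostequal}--\ref{clm:smallcover} analogues) really do follow, since all swaps now compare like with like. (Incidentally, your sequential-selection bound yields control of $\sum_x |\cF(x)|^2$, not $\sum_x |\cF(x)|^q$.)

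Second, in Step~3 your full inclusion--exclusion identity is correct, but the stated reason the higher-order terms are harmless is not: you claim the $j\ge 2$ summands are of lower order than the gap the $j=1$ term opens, yet a competitor can have $|G\cap[r]|=1$ for every $G\in\cG$ while $\cG$ is not intersecting, in which case the $j=1$ terms coincide exactly and there is no gap to absorb anything. The paper avoids this by never expanding past $j=1$: one simply observes that removing $\cG$ from $\cA$ destroys at most $\sum_{G\in\cG}\bigl(\disj^{(q-1)}(\cA)-|\cA^{(q-1)}(G)|\bigr)$ $q$-matchings, so $\qdisj(\cF) \ge \qdisj(\cA) - \sum_G\bigl(\disj^{(q-1)}(\cA)-|\cA^{(q-1)}(G)|\bigr)$. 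The right-hand side is minimized when every $|G\cap[r]|=1$, and for $\cL_{n,k}(s)$ the inequality is an equality because $\cG$ is intersecting; this finishes Step~3 without any asymptotic comparison.
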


\setcounter{section}{3}
\setcounter{THM}{0}

As before, we start with some estimates on $\disj^{(q)}(\cL_{n,k}(s))$.  Let $r$ be such that $\binom{n}{k} - \binom{n-r+1}{k} < s \le \binom{n}{k} - \binom{n-r}{k}$.  We may assign each set in $\cL_{n,k}(s)$ to one of its elements in $[r]$.  Note that a $q$-matching cannot contain two sets assigned to the same element, and so to obtain a $q$-matching, we must choose sets from different elements in $[r]$.  By convexity, the worst case is when the sets are equally distributed over $[r]$, giving the upper bound
\begin{equation} \label{ineq:qupperbound}
\disj^{(q)}(\cL_{n,k}(s)) \le \binom{r}{q} \left( \frac{s}{r} \right)^q.
\end{equation}

In this case we shall also require a lower bound.  Note that $\cL_{n,k}(s)$ contains all sets meeting $[r-1]$, with the remaining sets containing $\{r\}$; suppose there are $\alpha \binom{n-1}{k-1}$ such sets.  Note that we have $s = \binom{n}{k} - \binom{n-r+1}{k} + \alpha \binom{n-1}{k-1} \le (r-1) \binom{n-1}{k-1} + \alpha \binom{n-1}{k-1}$, so $\binom{n-1}{k-1} \ge \frac{s}{r-1+\alpha}$.

 We shall consider two types of $q$-matchings - those with one of the $\alpha \binom{n-1}{k-1}$ sets that only meet $[r]$ at $r$, and those without.  For the first type, we have $\alpha \binom{n-1}{k-1}$ choices for the set containing $r$.  For the remaining sets in the $q$-matching, we will avoid any overcounting by restricting ourselves to sets that only contain one element from $[r-1]$ to avoid any overcounting.  We can then make one of $\binom{r-1}{q-1}$ choices for how the remaining $q-1$ sets will meet $[r-1]$.  For each such set, we must avoid all other elements in $[r]$ and all previously used elements, leaving us with at least $\binom{n-kq-r}{k-1}$ options.

 For the second type of $q$-matchings, there are $\binom{r-1}{q}$ ways to choose how the sets meet $[r-1]$, and then at least $\binom{n-kq-r}{k-1}$ choices for each set.

This gives $\disj^{(q)}(\cL_{n,k}(s)) \ge \alpha \binom{n-1}{k-1} \binom{r-1}{q-1} \binom{n-kq-r}{k-1}^{q-1} + \binom{r-1}{q} \binom{n-kq-r}{k-1}^q$.  Using that, for large $n$,
\[ \binom{n-kq-r}{k-1} \ge \left(1 - \frac{k(kq+r)}{n} \right) \binom{n-1}{k-1}, \]
we can simplify this expression to
\begin{align*}
	\disj^{(q)}(\cL_{n,k}(s)) &\ge \alpha \left( 1 - \frac{k(kq+r)}{n} \right)^{q-1} \binom{r-1}{q-1} \binom{n-1}{k-1}^q + \left( 1 - \frac{k(kq+r)}{n} \right)^q \binom{r-1}{q} \binom{n-1}{k-1}^q \\
	&\ge \left(1 - \frac{k(kq+r)}{n} \right)^q \left( \frac{\alpha \binom{r-1}{q-1} + \binom{r-1}{q}}{(r-1+\alpha)^q} \right) s^q \ge \left( 1 - \frac{kq(kq+r)}{n} \right) \left( \frac{\alpha \binom{r-1}{q-1} + \binom{r-1}{q}}{(r-1+\alpha)^q} \right) s^q.
\end{align*}

For fixed $s$, this fuction of $\alpha$ is monotone increasing when $0 \le \alpha \le 1$, and so the right-hand side is minimized when $\alpha = 0$.  This gives the lower bound
\begin{equation} \label{ineq:qlowerbound}
\disj^{(q)}(\cL_{n,k}(s)) \ge \left(1 - o(1) \right) \binom{r-1}{q} \left( \frac{s}{r-1} \right)^q.
\end{equation}

Having established these bounds, we now prove Theorem \ref{thm:qmatchings}.

\begin{proof}[Proof of Theorem \ref{thm:qmatchings}]
Our proof is by induction, on $n$, $q$ and $s$.  The base case for $q=2$ is given by Theorem \ref{thm:disjpairs}\footnote{Alternatively, we may use the trivial base case of $q=1$, where we merely count the number of sets.}.  As noted earlier, if $s \le \binom{n}{k} - \binom{n - q + 1}{k}$, then $\cL_{n,k}(s)$ does not contain any $q$-matchings, and hence is clearly optimal.  Hence we may proceed to the induction step, with $q \ge 3$ and $\binom{n}{k} - \binom{n - q + 1}{k} < s \le \binom{n}{k} - \binom{n - \ell}{k}$.  In particular, we have $q \le r \le \ell$ and $s = \Omega(n^{k-1})$.

\medskip

Let $\cF$ be an extremal system of size $s$.  We again first consider the case where $\cF$ contains a full star, which we shall assume to be all sets containing $1$.  We split our $q$-matchings based on whether or not they meet $1$, giving $\disj^{(q)}(\cF) = | \cF^{(q)}(1) | + \disj^{(q)}(\cF \setminus \cF(1) )$.

Note that every $(q-1)$-matching not meeting $1$ can be extended to a $q$-matching by exactly $\binom{n-k(q-1) -1 }{k-1}$ sets containing $1$, so $|\cF^{(q)}(1)| = \disj^{(q-1)}( \cF \setminus \cF(1) ) \binom{n-k(q-1)-1}{k-1}$.  By the induction hypothesis, $\disj^{(q-1)}( \cF \setminus \cF(1) )$ is minimized by the lexicographical order.  Similarly, $\disj^{(q)}( \cF \setminus \cF(1) )$ is also minimized by the lexicographical order, and hence we deduce that $\disj^{(q)}( \cF ) \ge \disj^{(q)}( \cL_{n,k}(s) )$.

Thus we may assume that $\cF$ does not contain any full stars.  Hence, for any $x \in [n]$ and any $F \in \cF$, we may replace $F$ by a set containing $x$.

\medskip

\noindent \emph{Step 1:}  Show there is a popular element $x \in [n]$, with $|\cF^{(q-1)}(x)| = \Omega(s^{q-1})$.

\medskip

A $(q-1)$-matching in $\cF$ can be extended to a $q$-matching by a set $F \in \cF$ precisely when the other $q-1$ sets do not meet $F$.  Thus $F$ is in $\disj^{(q-1)}(\cF) - |\cF^{(q-1)}(F)|$ $q$-matchings.  Summing over all $F$ gives
\[ q \cdot \disj^{(q)}(\cF) = \sum_{F \in \cF} \left( \disj^{(q-1)}(\cF) - | \cF^{(q-1)}(F) | \right) = s \cdot \disj^{(q-1)}(\cF) - \sum_{F \in \cF} | \cF^{(q-1)}(F) |. \]
By the induction hypothesis, $\disj^{(q-1)}(\cF) \ge \disj^{(q-1)}( \cL_{n,k}(s))$, and since $\cF$ is extremal, we must have $\disj^{(q)}(\cF) \le \disj^{(q)}(\cL_{n,k}(s))$.  Combining these facts with the bounds from \eqref{ineq:qupperbound} and \eqref{ineq:qlowerbound}, we get
\begin{align*}
    \sum_{F \in \cF} |\cF^{(q-1)}(F)| &= s \cdot \disj^{(q-1)}( \cF ) - q \cdot \disj^{(q)}(\cF) \ge s \cdot \disj^{(q-1)}( \cL_{n,k}(s) ) - q \cdot \disj^{(q)}( \cL_{n,k}(s) ) \\
    &\geq \left(1 - o(1) \right) \binom{r-1}{q-1} \frac{s^q}{(r-1)^{q-1}} - q \binom{r}{q} \frac{s^q}{r^q} = \Omega(s^q).
\end{align*}

Averaging over the $s$ sets in $\cF$, we must have $| \cF^{(q-1)}(F) | = \Omega( s^{q-1} )$ for some $F \in \cF$.  Since $\cF^{(q-1)}(F) = \cup_{x \in F} \cF^{(q-1)}(x)$, by averaging over the $k$ elements in $F$ we have $| \cF^{(q-1)}(x) | = \Omega(s^{q-1})$ for some $x \in F$.

\medskip

This completes Step 1.

\medskip

\noindent \emph{Step 2:} Show there is a cover of size $r$.

\medskip

From Step 1, we know there is some popular element, which we may assume to be $1$.  We start by showing the existence of a reasonably small cover.

\begin{CLAIM} \label{clm:qsmallcover}
$X = \{ x : | \cF^{(q-1)}(x) | \ge \frac{1}{k} | \cF^{(q-1)}(1) | \}$ is a cover for $\cF$.
\end{CLAIM}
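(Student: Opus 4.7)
The plan is to argue by contradiction with a swap, directly mirroring the proof of Claim~\ref{clm:medcover}. Assuming $X$ is not a cover of $\cF$, pick $F \in \cF$ with $F \cap X = \emptyset$. Note that $1 \in X$ by our choice of labelling (since $|\cF^{(q-1)}(1)| \ge \tfrac{1}{k}|\cF^{(q-1)}(1)|$ trivially), and hence $1 \notin F$. Because $\cF$ contains no full star, we may also choose $G \in \binom{[n]}{k} \setminus \cF$ with $1 \in G$. The aim is to show that the swapped family $\cF' = (\cF \setminus \{F\}) \cup \{G\}$ satisfies $\disj^{(q)}(\cF') < \disj^{(q)}(\cF)$, contradicting extremality.

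The $q$-matchings of $\cF$ and $\cF'$ that use neither $F$ nor $G$ coincide, and the standard bijection between $q$-matchings using a given set and completing $(q-1)$-matchings in the rest of the family yields
\[ \disj^{(q)}(\cF) - \disj^{(q)}(\cF') = |(\cF \setminus \{F\})^{(q-1)}(G)| - |(\cF \setminus \{F\})^{(q-1)}(F)|. \]
The goal is therefore to show the right-hand side is strictly positive.

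The key step is a monotonicity observation: passing from $\cF^{(q-1)}(\cdot)$ to $(\cF \setminus \{F\})^{(q-1)}(\cdot)$ deducts the $(q-1)$-matchings that use $F$, and every such matching belonging to $\cF^{(q-1)}(G)$ automatically belongs to $\cF^{(q-1)}(F)$ (since using $F$ forces meeting $F$). Hence the deduction on the $F$-side is at least as large as on the $G$-side, giving
\[ |(\cF \setminus \{F\})^{(q-1)}(G)| - |(\cF \setminus \{F\})^{(q-1)}(F)| \ge |\cF^{(q-1)}(G)| - |\cF^{(q-1)}(F)|. \]
For the global bounds, $1 \in G$ gives $|\cF^{(q-1)}(G)| \ge |\cF^{(q-1)}(1)|$, while $F \cap X = \emptyset$ combined with the union bound gives
\[ |\cF^{(q-1)}(F)| \le \sum_{x \in F}|\cF^{(q-1)}(x)| < k \cdot \tfrac{1}{k}|\cF^{(q-1)}(1)| = |\cF^{(q-1)}(1)|. \]
Chaining these inequalities makes the right-hand side strictly positive, producing the desired contradiction.

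Since the argument amounts to an algebraic swap identity together with two elementary bounds, I do not foresee a significant obstacle. The one subtle point is the monotonicity observation in the third paragraph: it is precisely this that allows the clean global strict inequality $|\cF^{(q-1)}(G)| > |\cF^{(q-1)}(F)|$ to carry over to the localized quantities $|(\cF \setminus \{F\})^{(q-1)}(\cdot)|$ without any quantitative loss, so no further estimates on lower-order terms (such as the number of $(q-2)$-matchings containing $F$ and meeting $1$) are needed.
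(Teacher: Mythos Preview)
Your proposal is correct and follows essentially the same approach as the paper: argue by contradiction, pick $F\in\cF$ with $F\cap X=\emptyset$, swap it for a set $G\ni 1$ (possible since no star is full), and use the union bound $|\cF^{(q-1)}(F)|\le\sum_{x\in F}|\cF^{(q-1)}(x)|<|\cF^{(q-1)}(1)|\le|\cF^{(q-1)}(G)|$ to contradict extremality. The paper phrases the comparison as ``$F$ lies in more than $\disj^{(q-1)}(\cF)-|\cF^{(q-1)}(1)|$ $q$-matchings, while any $G\ni 1$ lies in at most that many'', implicitly using $\cF\setminus\{F\}\subset\cF$ to pass from $\cF'$ back to $\cF$; your explicit swap identity together with the monotonicity observation achieves the same reduction, so the two arguments are equivalent.
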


\begin{proof}
Suppose for contradiction that $X$ was not a cover for $\cF$.  Then there is some set $F \in \cF$ such that $F \cap X = \emptyset$, and so $\card{\cF^{(q-1)}(x)} < \frac{1}{k} \card{\cF^{(q-1)}(1)}$ for all $x \in F$.  Recall that the number of $q$-matchings $F$ is contained in is given by
\begin{align*}
    \disj^{(q-1)}(\cF) - \card{ \cF^{(q-1)}(F)} &= \disj^{(q-1)}(\cF) - \card{ \cup_{x \in \cF} \cF^{(q-1)}(x) } \ge \disj^{(q-1)}(\cF) - \sum_{x \in F} \card{ \cF^{(q-1)}(x) } \\
        &> \disj^{(q-1)}(\cF) - \card{ \cF^{(q-1)}(1) }.
\end{align*}
On the other hand, a set containing $1$ can be in at most $\disj^{(q-1)}(\cF) - \card{\cF^{(q-1)}(1)}$ $q$-matchings.  Since $\cF(1)$ is not a full star, we may replace $F$ with a set containing $1$, and would then decrease the number of $q$-matchings in $\cF$.  This contradicts the optimality of $\cF$, and it follows that $X$ is a cover.
\end{proof}

Having shown that this set $X$ is a cover, we now show that $X$ is not too big; its size is bounded by a function of $k$, $q$ and $\ell$.

\begin{CLAIM} \label{clm:qsmallset}
$|X| = O(1)$.
\end{CLAIM}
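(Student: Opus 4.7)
The plan is to prove $|X| = O(1)$ by a straightforward double-counting argument that compares the total popularity $\sum_{x \in [n]} |\cF^{(q-1)}(x)|$ against the individual lower bound satisfied by each $x \in X$. This mirrors the strategy of Claim \ref{clm:smallset} from the disjoint-pairs case, but now with $(q-1)$-matchings playing the role of sets.

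First I would observe that each $(q-1)$-matching $\{F_1,\ldots,F_{q-1}\}$ in $\cF$ has a union of exactly $k(q-1)$ elements of $[n]$, so a given matching is counted in $|\cF^{(q-1)}(x)|$ for precisely $k(q-1)$ values of $x$. This yields the exact identity
\[ \sum_{x \in [n]} |\cF^{(q-1)}(x)| \;=\; k(q-1)\cdot \disj^{(q-1)}(\cF). \]
Since $\disj^{(q-1)}(\cF) \le \binom{s}{q-1}$ trivially, the right-hand side is $O(s^{q-1})$, where the implicit constant depends only on $k$ and $q$.

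Next I would use the output of Step 1, namely $|\cF^{(q-1)}(1)| = \Omega(s^{q-1})$, combined with the defining property of $X$. Every $x \in X$ satisfies
\[ |\cF^{(q-1)}(x)| \;\ge\; \tfrac{1}{k}|\cF^{(q-1)}(1)| \;=\; \Omega(s^{q-1}). \]
Restricting the sum in the first display to $x \in X$ and applying the lower bound gives
\[ |X|\cdot \Omega(s^{q-1}) \;\le\; \sum_{x \in X}|\cF^{(q-1)}(x)| \;\le\; \sum_{x \in [n]}|\cF^{(q-1)}(x)| \;=\; O(s^{q-1}), \]
from which $|X| = O(1)$ follows, with the hidden constant depending only on $k$, $q$, and (through the $\Omega(s^{q-1})$ bound coming from Step 1) on $\ell$.

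There is no real obstacle here: the identity is exact and the only inequalities are the trivial $\disj^{(q-1)}(\cF) \le \binom{s}{q-1}$ and the lower bound inherited from Step 1. The only mild subtlety is bookkeeping the implicit constants, but since the theorem is stated in asymptotic form (with $k,q,\ell$ fixed and $n\to\infty$), nothing further needs to be tracked at this stage; sharper quantitative versions would simply replace $\binom{s}{q-1}$ with $\frac{s^{q-1}}{(q-1)!}$ and plug in the explicit $\Omega(s^{q-1})$ constant extracted from \eqref{ineq:qupperbound} and \eqref{ineq:qlowerbound}.
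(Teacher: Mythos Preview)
Your proof is correct and essentially identical to the paper's: both use the identity $\sum_{x\in[n]} |\cF^{(q-1)}(x)| = k(q-1)\,\disj^{(q-1)}(\cF)$, bound the right-hand side by $O(s^{q-1})$, and divide by the $\Omega(s^{q-1})$ lower bound from Step~1. The only cosmetic difference is that the paper bounds $\disj^{(q-1)}(\cF)$ by $s^{q-1}$ rather than $\binom{s}{q-1}$.
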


\begin{proof}
As there can be at most $s^{q-1}$ $(q-1)$-matchings in $\cF$, we have
\[ \frac{1}{k} | \cF^{(q-1)}(1) | |X| \le \sum_{x \in X} | \cF^{(q-1)}(x)| \le \sum_{x \in [n]} | \cF^{(q-1)}(x) | = k(q-1) \disj^{(q-1)}(\cF) \le k(q-1)s^{q-1}. \]

Since $|\cF^{(q-1)}(1)| = \Omega(s^{q-1})$, this gives $|X| = O(1)$, as required.
\end{proof}

Now take a minimal subcover of $X$, which we may assume to be $[m]$, where $m = O(1)$.  We shall shift our focus from $(q-1)$-matchings to the individual sets themselves.  For each $i \in [m]$, we shall let $\cF^-(i) = \{ F \in \cF : F \cap [m] = \{i\} \}$ be those \emph{sets} in $\cF$ that meet $[m]$ precisely at $i$; by the minimality of the cover, these subsystems are non-empty.  Since any set in $\cF(i) \setminus \cF^-(i)$ must contain not just $i$ but also some other element in $[m]$, we have $\card{\cF^-(i)} \ge \card{ \cF(i) } - m \binom{n-2}{k-2} = \card{ \cF(i) } - o(s)$.

We will now show that for an extremal system, we must have $m = r$.  We first require the following claim.

\begin{CLAIM} \label{clm:qalmostequal}
For any $i,j \in [m]$, we have $|\cF(i)| = |\cF(j)| + o(s)$.
\end{CLAIM}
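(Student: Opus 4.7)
I will adapt Claim~\ref{clm:almostequal} from the disjoint-pairs setting by replacing individual sets with $(q-1)$-matchings as the natural ``unit'' of counting. Fix $i, j \in [m]$ and pick $F \in \cF^-(j)$ (nonempty by the minimality of the cover). Since $\cF$ contains no full star, I can choose $F' \in \binom{[n]}{k} \setminus \cF$ with $F' \cap [m] = \{i\}$. Let $N_F$ and $N_{F'}$ denote the number of $q$-matchings through $F$ in $\cF$ and through $F'$ in $(\cF \setminus \{F\}) \cup \{F'\}$, respectively. By the extremality of $\cF$, the exchange inequality $N_{F'} \ge N_F$ must hold.

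To lower-bound $N_F$, I restrict to $q$-matchings $\{F, G_1, \dots, G_{q-1}\}$ in which $G_k \in \cF^-(l_k)$ for distinct indices $l_k \in [m]\setminus\{j\}$. For any fixed choice of indices, each $\cF^-(l_k)$ loses at most $kq\binom{n-2}{k-2} = o(s)$ sets to conflicts with $F$ or previously chosen $G$'s, and $|\cF^-(l)| = |\cF(l)| - o(s)$ from the observation made just before the claim. Summing over the $\binom{m-1}{q-1}$ admissible index choices yields
\[ N_F \ge (1-o(1))\, e_{q-1}\bigl((|\cF(l)|)_{l \in [m]\setminus\{j\}}\bigr), \]
where $e_{q-1}$ denotes the elementary symmetric polynomial. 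To upper-bound $N_{F'}$, I note that such matchings correspond to $(q-1)$-matchings in $\cF$ disjoint from $F'$, whose sets all avoid $i$. Labeling each set of a $(q-1)$-matching by the minimum element of $[m]$ it contains gives an injection into $[m]\setminus\{i\}$ by pairwise disjointness, so
\[ N_{F'} \le e_{q-1}\bigl((|\cF(l)|)_{l \in [m]\setminus\{i\}}\bigr). \]

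Writing $a_l = |\cF(l)|$, $P = e_{q-2}((a_l)_{l \in [m]\setminus\{i,j\}})$ and $Q = e_{q-1}((a_l)_{l \in [m]\setminus\{i,j\}})$, the inequality $N_{F'} \ge N_F$ becomes $a_j P + Q \ge (1-o(1))(a_i P + Q)$, which rearranges to $a_i - a_j \le o(a_i + Q/P) = o(s)$ since $Q/P = O(\max_l a_l) = O(s)$. Swapping the roles of $i$ and $j$ gives the reverse bound. The main obstacle is verifying $P > 0$, equivalently that $m \ge q$. I plan to establish this via Frankl's theorem (quoted just before Theorem~\ref{thm:qmatchings}): any family of size $s > \binom{n}{k} - \binom{n-q+1}{k}$ contains a $q$-matching, whereas pigeonhole forces any family covered by fewer than $q$ elements to be $q$-matching-free, so $m < q$ would contradict $\disj^{(q)}(\cF) > 0$. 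The residual $o(s)$ bookkeeping is routine given the estimates $|\cF(l)| = O(s)$ and $\binom{n-2}{k-2} = o(s)$ already used in this section.
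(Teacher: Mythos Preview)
Your approach is correct and essentially mirrors the paper's: both run a set-exchange argument and compare elementary-symmetric expressions in the $|\cF(l)|$'s, the only difference being that you count $q$-matchings \emph{through} the exchanged set while the paper counts $(q-1)$-matchings \emph{meeting} it (these are complementary quantities, so the algebra is the same up to a sign), and your explicit verification that $m\ge q$ is something the paper leaves implicit.

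Two cosmetic points to tidy up. First, an $F'\notin\cF$ with $F'\cap[m]=\{i\}$ need not exist (all such sets could lie in $\cF$ even though the star at $i$ is not full); but any $F'\notin\cF$ with $i\in F'$ works, since your upper bound on $N_{F'}$ only uses that the $(q-1)$-matching avoids $i$. Second, pulling the factor $(1-o(1))$ outside the product in your lower bound for $N_F$ requires $|\cF(l)|=\Omega(s)$ for each $l\in[m]$, not merely $P>0$; this holds because $l\in[m]\subseteq X$ gives $|\cF^{(q-1)}(l)|=\Omega(s^{q-1})$, and since $|\cF^{(q-1)}(l)|\le|\cF(l)|\cdot s^{q-2}$ one gets $|\cF(l)|=\Omega(s)$.
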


\begin{proof}
Recall that set $F \in \cF$ contributes $\disj^{(q-1)}(\cF) - | \cF^{(q-1)}(F) |$ $q$-matchings to $\cF$.  By estimating $|\cF^{(q-1)}(F)|$ for sets containing $i$ or $j$, we shall show that if $| \cF(i) |$ and $|\cF(j)|$ are very different, then we can decrease the number of $q$-matchings by shifting sets.

\medskip

Consider a set $F \in \cF^-(i)$.  We wish to bound $\card{\cF^{(q-1)}(F)}$.

For every $(q-1)$-matching in $\cF^{(q-1)}(F)$, we must have at least one of the sets in the $(q-1)$-matching meeting $F$.  Either this set can contain $i$, in which case there are $|\cF(i)|$ possibilities, or it contains some element in $F \setminus \{i\}$, as well as some element in $[m]$.  However, the number of options in the latter case is at most $mk\binom{n-2}{k-2} = o(s)$.  We can then count the number of possibilities for the other sets in the matching just as we did when establishing the inequalities \eqref{ineq:qupperbound} and \eqref{ineq:qlowerbound}.  First we choose representatives $A \subset [m] \setminus \{i\}$ for the other $q-2$ sets, and then we choose sets corresponding to the given elements; that is, $H \in \cF(a)$ for all $a \in A$.  This provides an overestimate for $\card{\cF^{(q-1)}(F)}$, as some of these collections of $q-1$ sets may not be disjoint, while some are counted multiple times.  However, we do obtain the upper bound
\begin{equation} \label{ineq:qmatchupper}
    |\cF^{(q-1)}(F)| \le \left( 1 + o(1) \right) |\cF(i)| \sum_{A \subset \binom{[m] \setminus \{i\}}{q-2}} \prod_{a \in A} |\cF(a)|.
\end{equation}

We now consider replacing $F$ by some set $G$ containing $j$, and determine how many new $q$-matchings would be formed.  The number of $q$-matchings $G$ contributes is $\disj^{(q-1)}(\cF) - \card{\cF^{(q-1)}(G)} \le \disj^{(q-1)}(\cF) - \card{\cF^{(q-1)}(j)}$, since $j \in G$.

To bound $\card{\cF^{(q-1)}(j)}$, note that we can form $(q-1)$-matchings containing $j$ by first choosing a set from $\cF^-(j)$, then choosing a set of $q-2$ other representatives $A \subset [m] \setminus \{j\}$, and choosing disjoint sets $H \in \cF^-(a)$, $a \in A$.  To ensure the sets we choose are disjoint, we must avoid any elements we have already used.  There can be at most $k(q-1)$ such elements, and so we have to avoid at most $\binom{n-1}{k-1} - \binom{n-k(q-1)-1}{k-1} \le k(q-1) \binom{n-2}{k-2} = o(s)$ sets each time.  By choosing the sets from $\cF^-(a)$, and not $\cF(a)$, we ensure there is no overcounting, as each such $(q-1)$-matching has a unique set of representatives in $[m]$.  Thus we have the bound
\begin{equation} \label{ineq:qmatchlower}
\card{\cF^{(q-1)}(j)} \ge \card{\cF^-(j)} \sum_{A \in \binom{[m] \setminus \{j\}}{q-2}} \prod_{a \in A} \left( \card{\cF^-(a)} - o(s) \right) = \left(1 - o(1) \right) \card{\cF(j)} \sum_{A \in \binom{[m] \setminus \{j\}}{q-2}} \prod_{a \in A} \card{\cF(a)},
\end{equation}
since $\card{\cF^-(a)} = \card{\cF(a)} - o(s)$ for all $a \in [m]$.

\medskip

Since $\cF$ is optimal, we must have $\card{\cF^{(q-1)}(F)} \ge \card{\cF^{(q-1)}(G)}$.  Comparing \eqref{ineq:qmatchupper} and \eqref{ineq:qmatchlower}, we find
\[ \left( 1 + o(1) \right) |\cF(i)| \sum_{A \subset \binom{[m] \setminus \{i\}}{q-2}} \prod_{a \in A} |\cF(a)| \ge  \left(1 - o(1) \right) \card{\cF(j)} \sum_{A \in \binom{[m] \setminus \{j\}}{q-2}} \prod_{a \in A} \card{\cF(a)}. \]

Some terms appear on both sides of the inequality, and so taking the difference gives
\[ \left( \card{\cF(i)} - \card{\cF(j)} \right) \sum_{A \subset \binom{[m] \setminus \{i,j\}}{q-2}} \prod_{a \in A} \card{\cF(a)} \ge o(s^{q-1}). \]

This implies $\card{\cF(i)} \ge \card{\cF(j)} + o(s)$.  By symmetry, the reverse inequality also holds, and thus $\card{\cF(i)} = \card{\cF(j)} + o(s)$, as required.
\end{proof}

Note that we have $s = \card{\cF} = \card{ \cup_{i \in [m]} \cF(i)} \ge \sum_{i=1}^m \card{\cF(i)} - \sum_{i < j} \card{\cF(i) \cap \cF(j)}$.  Since $\card{\cF(i) \cap \cF(j)} \le \binom{n-2}{k-2} = o(s)$ for all $i,j$, it follows that $\sum_{i =1}^m \card{\cF(i)} = s + o(s)$.  Claim \ref{clm:qalmostequal} shows that all the stars have approximately the same size, and so $\card{\cF(i)} = \frac{s}{m} + o(s)$ for each $1 \le i \le m$.  We can now show that we have a smallest possible cover.

\begin{CLAIM} \label{clm:qmincover}
If $\cF$ is extremal, then $\cF$ can be covered by $r$ elements.
\end{CLAIM}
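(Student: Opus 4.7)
The plan is to establish a lower bound on $\disj^{(q)}(\cF)$ as a function of $m$ and compare it against the upper bound $\disj^{(q)}(\cL_{n,k}(s)) \le \binom{r}{q}(s/r)^q$ from \eqref{ineq:qupperbound}; this will force $m \le r$. One may first dispense with the case $m < q$: there every set contains an element of $[m]$, so no $q$-matching can have representatives in $q$ distinct cover elements, forcing $\disj^{(q)}(\cF) = 0$. This contradicts extremality, since $\disj^{(q)}(\cL_{n,k}(s)) > 0$ throughout our range $s > \binom{n}{k} - \binom{n-q+1}{k}$. So we may assume $m \ge q$.

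The key step is to count $q$-matchings by drawing one set from each reduced star of a chosen $A \in \binom{[m]}{q}$. For each ordering $(a_1, \ldots, a_q)$ of $A$, I would pick $F_{a_i} \in \cF^-(a_i)$ sequentially, requiring each new set to avoid the at most $k(i-1)$ ground elements already used. Each such disjointness constraint rules out at most $k(i-1)\binom{n-2}{k-2} = o(s)$ sets from $\cF^-(a_i)$, and since the discussion preceding the claim gives $\card{\cF^-(a_i)} = s/m + o(s)$, there remain $s/m + o(s)$ valid choices at each step. Because the subsystems $\cF^-(a)$ are pairwise disjoint, each resulting $q$-matching is counted exactly $q!$ times across the orderings of its representatives, yielding
\[ \disj^{(q)}(\cF) \;\ge\; \binom{m}{q}\left( \frac{s}{m} + o(s) \right)^q \;=\; (1 - o(1)) \binom{m}{q} \left( \frac{s}{m} \right)^q. \]

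Combining this with \eqref{ineq:qupperbound} and dividing by $s^q$ will give $(1 - o(1))\binom{m}{q}/m^q \le \binom{r}{q}/r^q$. The function $g(m) = \binom{m}{q}/m^q = \tfrac{1}{q!}\prod_{i=0}^{q-1}(1 - i/m)$ is strictly increasing in $m \ge q$, and Claim \ref{clm:qsmallset} bounds $m$ by a constant depending only on $k,q,\ell$. Hence, once $n_1(k,q,\ell)$ is large enough that the $o(1)$ error falls below the positive constant $g(r+1) - g(r)$, any hypothetical $m > r$ yields a contradiction. Therefore $m \le r$, and the cover $[m]$ may be padded arbitrarily to one of size exactly $r$. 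The most delicate point is the use of the \emph{reduced} stars $\cF^-(a)$ rather than $\cF(a)$: this simultaneously guarantees that each $q$-matching is counted with multiplicity exactly $q!$ and that the cumulative loss from enforcing disjointness remains $o(s)$ at every step, so that the comparison to \eqref{ineq:qupperbound} retains the correct leading constant.
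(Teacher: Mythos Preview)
Your main argument is correct and essentially identical to the paper's: build $q$-matchings by choosing $A\in\binom{[m]}{q}$ and then sets from the reduced stars $\cF^-(a)$, absorb the $o(s)$ losses, and compare $(1-o(1))\binom{m}{q}(s/m)^q$ against $\binom{r}{q}(s/r)^q$ via monotonicity of $m\mapsto\binom{m}{q}/m^q$.

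There is, however, a logical slip in your treatment of the case $m<q$. You argue that $\disj^{(q)}(\cF)=0$ while $\disj^{(q)}(\cL_{n,k}(s))>0$ and call this a contradiction to extremality. It is not: extremality means $\cF$ \emph{minimizes} the number of $q$-matchings, so $\disj^{(q)}(\cF)\le\disj^{(q)}(\cL_{n,k}(s))$, and having zero on the left is perfectly consistent with that. Fortunately the case is vacuous anyway: since $[m]$ is a cover, $\card{\cF}\le\binom{n}{k}-\binom{n-m}{k}$, and for $m\le q-1\le r-1$ this is at most $\binom{n}{k}-\binom{n-r+1}{k}<s$, which is impossible. (Equivalently, $m<q\le r$ already gives $m\le r$ directly.) So the slip does not damage the proof, but the reasoning as written is backwards.
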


\begin{proof}
Now that we have control over the sizes of the subsystems $\cF(i)$, we can estimate the number of $q$-matchings the system contains.  As in our calculations for Claim \ref{clm:qalmostequal}, we can obtain a $q$-matching by choosing a collection $A$ of $q$ elements in $[m]$, and then choosing sets from the corresponding subsystems $\cF(a)$, $a \in A$.  In order for this choice of sets to form a $q$-matching, each set we choose should avoid the elements of the previously chosen sets, of which there can be at most $k(q-1)$.  Moreover, to avoid overcounting, we shall choose sets from $\cF^-(a)$, and so shall avoid the other $m-1$ elements of $[m]$.  Thus, for a given $a \in A$, the forbidden sets are those containing $a$, and one of at most $k(q-1) + m-1$ other elements, and so we forbid at most $\left( k (q-1) + m-1 \right) \binom{n-2}{k-2} = o(s)$ sets.  Thus we have
\[ \disj^{(q)}(\cF) \ge \sum_{A \in \binom{[m]}{q}} \prod_{a \in A} \left( |\cF(a)| - o(s) \right) = \left(1 - o(1) \right) \binom{m}{q} \left( \frac{s}{m} \right)^q. \]

On the other hand, since $\cF$ is extremal, we must have $\disj^{(q)}(\cF) \le \disj^{(q)}(\cL_{n,k}(s)) \le \binom{r}{q} \left( \frac{s}{r} \right)^q$.  As $\binom{m}{q} \left( \frac{s}{m} \right)^q$ is increasing in $m$, these bounds imply we must have $m = r$.
\end{proof}

This concludes Step 2.

\medskip

\noindent \emph{Step 3:} Show that $\cL_{n,k}(s)$ is optimal.

\medskip

We complete the induction by showing that $\cL_{n,k}(s)$ does indeed minimize the number of $q$-matchings.  From the previous steps, we may assume that an extremal system $\cF$ is covered by $[r]$.  As before, we shall let $\cA = \left\{A \in \binom{[n]}{k} : A \cap [r] \neq \emptyset \right\}$, so $\cF \subset \cA$, and we let $\cG = \cA \setminus \cF$.  Note that for every $G \in \cG$, $\disj^{(q-1)}(\cA) - |\cA^{(q-1)}(G)|$ counts the number of $q$-matchings in $\cA$ containing $G$.  Hence
\[ \disj^{(q)}(\cF) \ge \disj^{(q)}(\cA) - \sum_{G \in \cG} \left( \disj^{(q-1)}(\cA) - |\cA^{(q-1)}(G)| \right) = \disj^{(q)}(\cA) - |\cG| \disj^{(q-1)}(\cA) + \sum_{G \in \cG} |\cA^{(q-1)}(G)|. \]

Now the first two terms are independent of the structure of $\cF$.  We claim that $\card{\cA^{(q-1)}(G)}$ is minimized when $\card{G \cap [r]} = 1$.  Indeed, fix some $G \in \cG$.  Note that the number of $(q-1)$-matchings in $\cA$ that only meet $G$ outside $[r]$ is at most $kr \binom{n-2}{k-2} s^{q-2} = o(s^{q-1})$, since we must choose one of $k$ elements of $G$ and one of $r$ elements of $[r]$ for the set to contain, and then there are at most $s^{q-2}$ choices for the remaining $q-2$ sets.  Hence almost all the $(q-1)$-matchings in $\cA^{(q-1)}(G)$ meet $G$ in $G \cap [r]$, and thus $\card{\cA^{(q-1)}(G)}$ is obviously minimized when $\card{G \cap [r]} = 1$.

\medskip

When $\cF = \cL_{n,k}(s)$, we have $G \cap [r] = \{r\}$ for all $G \in \cG$, and so the right-hand side is minimized.  Moreover, because $\cG$ is an intersecting system, it follows that every $(q-1)$-matching in $\cA$ can contain at most $1$ set from $\cG$, and so the above inequality is in fact an equality.  This shows that $\cL_{n,k}(s)$ minimizes the number of $q$-matchings.

This completes the induction step, and thus the proof of Theorem \ref{thm:qmatchings}.
\end{proof}

\section{$t$-disjoint pairs} \label{sec:tdisjoint}

We now seek a different extension of Theorem \ref{thm:disjpairs}.  Recall that we call a pair of sets $F_1, F_2$ $t$-intersecting if $\card{F_1 \cap F_2} \ge t$, and $t$-disjoint otherwise.  As shown by Wilson \cite{wilson84}, provided $n \ge (k-t+1)(t+1)$, the largest $t$-intersecting system consists of $\binom{n-t}{k-t}$ sets that share a common $t$-set $X \in \binom{[n]}{t}$; we call such a system a \emph{(full) $t$-star with center $X$}.  Note that $\cL_{n,k}(\binom{n-t}{k-t})$ is itself a $t$-star with center $[t]$.  In the following theorem, we show that when $n$ is sufficiently large, the minimum number of $t$-disjoint pairs is attained by taking full $t$-stars.  In this setting, not all unions of $t$-stars are isomorphic, as the structure depends on how the centers intersect.  We show that it is optimal to have the centers be the first few sets in the lexicographical ordering on $\binom{[n]}{t}$, which is the case for $\cL_{n,k}(s)$.

\setcounter{section}{1}
\setcounter{THM}{7}

\begin{THM}
Provided $n \ge n_2(k,t,\ell)$ and $0 \le s \le \binom{n-t+1}{k-t+1} - \binom{n-t-\ell+1}{k-t+1}$, $\cL_{n,k}(s)$ minimizes the number of $t$-disjoint pairs among all systems of $s$ sets in $\binom{[n]}{k}$.
\end{THM}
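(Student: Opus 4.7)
The plan is to reduce to Theorem~\ref{thm:disjpairs}. The key observation is the following reduction: if every $F \in \cF$ contains a common $(t-1)$-subset $T$, then setting $\cF' = \{F \setminus T : F \in \cF\}$ gives a $(k-t+1)$-uniform family on $[n] \setminus T$ of the same size $s$, with the correspondence $|F_1 \cap F_2| \ge t \iff |(F_1 \setminus T) \cap (F_2 \setminus T)| \ge 1$. Thus $\tdisj(\cF) = \disj(\cF')$, and the size constraint $s \le \binom{n-t+1}{k-t+1} - \binom{n-t-\ell+1}{k-t+1}$ is exactly the Theorem~\ref{thm:disjpairs} range for $(n', k') = (n-t+1, k-t+1)$. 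So the minimizer on the reduced side is $\cL_{n',k'}(s)$, which lifts back to $\cL_{n,k}(s)$ since the first $s$ sets in the lex order on $\binom{[n]}{k}$ are precisely those containing $T = [t-1]$ and meeting $\{t, t+1, \ldots, t+r-1\}$ for the corresponding $r \le \ell$.

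It therefore suffices to establish that an extremal family $\cF$ may be assumed to have all sets through a common $(t-1)$-set. I would follow the three-step template of the proof of Theorem~\ref{thm:disjpairs}. The upper bound $\tdisj(\cL_{n,k}(s)) \le \tfrac12(1-1/r)s^2$ follows by assigning each $F \in \cL_{n,k}(s)$ to one of the $r$ full $t$-stars covering it and applying convexity. Summing $\tdisj(F, \cF) \ge s - \sum_{X \in \binom{F}{t}} |\cF(X)|$ over $F \in \cF$ yields
\[
2\tdisj(\cF) \ge s^2 - \sum_{X \in \binom{[n]}{t}} |\cF(X)|^2,
\]
so combining with $\tdisj(\cF) \le \tdisj(\cL_{n,k}(s))$ gives $\sum_X |\cF(X)|^2 \ge s^2/r$. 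A pigeonhole argument, using $|\cF(X) \cap \cF(Y)| \le \binom{n-t-1}{k-t-1}$ for $X \ne Y$ to control the count of moderately popular $t$-sets as in Claim~\ref{clm:smallset}, then produces $X^* \in \binom{[n]}{t}$ with $|\cF(X^*)| = \Omega(s/r)$. Shifting arguments parallel to Claims~\ref{clm:medcover}--\ref{clm:mincover}, where a set $F$ of low $t$-intersection score is replaced by one containing $X^*$ (guaranteed to be $t$-intersecting with all of $\cF(X^*)$), show that moderately popular $t$-sets form a $t$-cover of $\cF$ that refines to exactly $r$ centers $X_1, \ldots, X_r$.

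The main obstacle is the final structural claim: that these $r$ centers may be chosen to share a common $(t-1)$-subset, beyond merely covering $\cF$ in $t$-star fashion. The key calculation compares cross-pair counts $\tdisj(\cF(X_i), \cF(X_j))$: when $|X_i \cap X_j| = t-1$, a typical pair from the two $t$-stars is $t$-intersecting with probability $\sim (k-t)(k-t+2)/n$, whereas when $|X_i \cap X_j| < t-1$ the probability drops to $O((k/n)^{t - |X_i \cap X_j|})$, i.e., the ``good'' configuration has more $t$-intersecting, hence fewer $t$-disjoint, cross pairs. Combined with the equidistribution $|\cF(X_i)| = \Theta(s/r)$ coming from Claim~\ref{clm:almostequal}-style estimates, this gives a leading-order savings of $\Theta(n^{2k-2t-1})$ per pair of centers, which is enough (for $n \ge n_2(k,t,\ell)$) to exceed the upper bound on $\tdisj(\cL_{n,k}(s))$ and contradict optimality of $\cF$ whenever some $|X_i \cap X_j| < t-1$. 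Iterating this comparison (or invoking a Helly-type observation) upgrades pairwise $(t-1)$-overlap to a \emph{common} $(t-1)$-set shared by all centers, reducing us to the setting of the first paragraph. Making this chain of estimates rigorous, together with a complementary argument analogous to Step~3 of the proof of Theorem~\ref{thm:disjpairs} applied in the reduced universe $[n] \setminus T$, completes the proof of Theorem~\ref{thm:tdisjoint}, and the characterization Corollary~\ref{cor:toptimal} then follows from Proposition~\ref{prop:structure} after this reduction.
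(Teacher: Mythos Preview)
Your reduction idea is clean and is exactly how the paper phrases its characterization (Corollary~\ref{cor:toptimal}): once every set contains a common $(t-1)$-set $T$, the problem collapses to Theorem~\ref{thm:disjpairs} on $[n]\setminus T$. The popular-element and small-$t$-cover steps also run parallel to the paper's Proposition~\ref{prop:tstars}. The gap is in your last structural step. Pairwise $(t-1)$-overlap of the centers does \emph{not} upgrade to a common $(t-1)$-set: for $3 \le r \le t+1$, take $\{X_1,\dots,X_r\}\subset\binom{[t+1]}{t}$. Then $|X_i\cap X_j|=t-1$ for all $i<j$, yet $\bigcap_i X_i$ has size $t+1-r\le t-2$. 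Your cross-pair comparison only distinguishes $|X_i\cap X_j|=t-1$ from $|X_i\cap X_j|\le t-2$, so it cannot separate this ``clique'' configuration from the lexicographic ``sunflower'' one --- and there is no Helly-type shortcut here, since these are finite sets, not convex bodies.

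The paper confronts precisely this obstruction. Lemma~\ref{lem:lexsmall} shows the two configurations already differ in $|\cF|$ (the sunflower union is strictly smaller), and Lemma~\ref{lem:fullstars} carries out an explicit second-order calculation of $\tint(\cF)$ in the clique case $X_i\in\binom{[t+1]}{t}$ versus $\tint(\cL)$, finding a gap of $\tfrac14(r+1)r(r-1)(r-2)\binom{n-t}{k-t-1}^2+O(n^{2k-2t-3})>0$ for $r\ge3$. This is genuinely delicate: the leading terms of order $n^{2k-2t}$ and $n^{2k-2t-1}$ coincide, and the discrepancy only appears at order $n^{2k-2t-2}$. Your outline would need this computation (or an equivalent one) to close the gap. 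A secondary point: the paper's Proposition~\ref{prop:tstars} allows the minimal $t$-cover to have size $r-1$ as well as $r$ (case~(iii), when $s$ sits just above the size of $r-1$ full $t$-stars), and handles it by a separate filling argument; your sketch asserts the cover refines to exactly $r$ centers, which is not always the case.
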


\setcounter{section}{4}
\setcounter{THM}{0}

It shall sometimes be helpful to count the number of $t$-intersecting pairs instead of $t$-disjoint pairs.  Thus we introduce the notation $\tint(\cF)$ to represent the number of $t$-intersecting pairs of sets in $\cF$, and $\tint(\cF, \cG) = \card{ \{ (F,G) \in \cF \times \cG: \card{F \cap G} \ge t \} }$ to count the number of cross-$t$-intersections between $\cF$ and $\cG$.  Note that a set $F$ is $t$-intersecting with itself, since $\card{F \cap F} = k > t$.  Since $\sum_{F \in \cF} \tint(F, \cF)$ counts the $t$-intersecting pairs between distinct sets twice, and those with the same set only once, we obtain the identity $\sum_{F \in \cF} \tint(F, \cF) = 2 \tint( \cF ) - \card{\cF}$.

\medskip

We begin with a heuristic calculation that suggests why it is optimal to have full $t$-stars.  Let $\cF$ be a full $t$-star, say with center $X \in \binom{[n]}{t}$, and let $F$ be a set not containing $X$.  For a set $G$ in $\cF$ to be $t$-intersecting with $F$, $G$ must contain the $t$ elements of $X$, as well as some $t - \card{F \cap X}$ elements from $F$.  The number of such sets $G$ is maximized when $\card{F \cap X} = t-1$, giving
\begin{equation} \label{eqn:heuristic}
	\tint(F, \cF) \le (k-t+1) \binom{n-t-1}{k-t-1} = O(n^{k-t-1}) = o(\card{\cF}).
\end{equation}

Hence if a $t$-star does not contain a set $F$, $F$ is $t$-disjoint from almost all its members.  It should thus be optimal to take full $t$-stars, as that is where the $t$-intersections come from.  Indeed, this turns out to be the case.  As we shall see, for a set system $\cF$, the leading term in $\tdisj(\cF)$ is determined by the number of $t$-stars in $\cF$.  While unions of $t$-stars may be non-isomorphic, the differences only affect the lower order terms of $\tdisj(\cF)$.

\medskip

In order to prove Theorem \ref{thm:tdisjoint}, we shall require a few preliminary results.  Proposition \ref{prop:tstars} can be thought of as a rough characterization of extremal systems, as it shows that the extremal systems should be supported on the right number of $t$-stars.  To this end, it will be useful to define an \emph{almost full $t$-star} to be a $t$-star in $\cF$ containing $(1 - o(1)) \binom{n-t}{k-t}$ sets.  Formally, this means that for all fixed $k, t$ and $\ell$, there is some $\varepsilon = \varepsilon(k,t,\ell) > 0$ such that a $t$-star will be almost full if it contains $(1 - \varepsilon)\binom{n-t}{k-t}$ sets.

\begin{PROP} \label{prop:tstars}
Suppose $n \ge n_2(k,\ell,t)$, and $\binom{n-t+1}{k-t+1} - \binom{n-t-r+2}{k-t+1} < s \le \binom{n-t+1}{k-t+1} - \binom{n-t-r+1}{k-t+1}$.  If $\cF \subset \binom{[n]}{k}$ has the minimum number of $t$-disjoint pairs over all systems of $s$ sets, then either:
\begin{itemize}
    \item[(i)] $\cF$ contains $r-1$ full $t$-stars,
    \item[(ii)] $\cF$ consists of $r$ almost full $t$-stars, or
    \item[(iii)] $\cF$ consists of $r-1$ almost full $t$-stars.
\end{itemize}
\end{PROP}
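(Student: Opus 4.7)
The plan is to adapt the three-step framework from the proof of Theorem \ref{thm:disjpairs}, with $t$-element sets playing the role of individual elements throughout. The key observation is the heuristic \eqref{eqn:heuristic}: if $F \not\supseteq T$, then $F$ is $t$-intersecting with only $(k-t+1)\binom{n-t-1}{k-t-1} = o(N)$ sets of the full $t$-star centered at $T$, where $N = \binom{n-t}{k-t}$. Since minimising $\tdisj(\cF)$ over families of fixed size $s$ is equivalent to maximising $\tint(\cF) = \binom{s}{2} - \tdisj(\cF)$, and $t$-intersecting pairs are essentially concentrated inside individual $t$-stars, extremal families should look like unions of (almost) full $t$-stars. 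Throughout, for $T \in \binom{[n]}{t}$, write $\cF(T) = \{F \in \cF : T \subseteq F\}$.

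The first step is to locate popular $t$-sets. Every $t$-intersecting pair $\{F_1, F_2\}$ is counted at least $\binom{|F_1 \cap F_2|}{t} \ge 1$ times in $\sum_T \binom{|\cF(T)|}{2}$, so
\[ \tint(\cF) \le \sum_{T \in \binom{[n]}{t}} \binom{|\cF(T)|}{2}. \]
A direct inclusion--exclusion calculation on $\cL_{n,k}(s)$, which in the given range is the initial segment of the $(t-1)$-star centered at $[t-1]$ and hence the union of $r-1$ full $t$-stars with centers $[t-1]\cup\{j\}$, $t \le j \le t+r-2$, plus a partial $r$-th star of size approximately $\alpha N$, yields $\tint(\cL_{n,k}(s)) = ((r-1)+\alpha^2)\binom{N}{2} + O(n^{2k-2t-1})$, since cross-star contributions are of lower order by \eqref{eqn:heuristic}. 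Extremality forces $\sum_T \binom{|\cF(T)|}{2}$ to achieve at least this value. Together with the double-counting identity $\sum_T |\cF(T)| = \binom{k}{t} s$ and the Bonferroni bound $|\cF(T_1) \cap \cF(T_2)| = |\cF(T_1 \cup T_2)| \le \binom{n-t-1}{k-t-1} = o(N)$, this forces the number of $t$-sets with $|\cF(T)| \ge cN$ to be at most a constant $L = L(k,t,\ell)$, for a suitable $c = c(k,t,\ell)$.

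Next I would show that the collection $\mathcal{T}$ of popular $t$-sets is a $t$-cover of $\cF$, via a shifting argument analogous to Claim \ref{clm:medcover}: if some $F \in \cF$ contained no $T \in \mathcal{T}$, then by \eqref{eqn:heuristic} it would be $t$-intersecting with only $o(N)$ sets of each popular star, so replacing $F$ by a set containing a popular $T$ would gain $(1-o(1))|\cF(T)| \ge (1-o(1))cN$ $t$-intersections, contradicting optimality. An analogue of Claim \ref{clm:almostequal} obtained by exchanging an arbitrary $F \in \cF(T)$ with one lying in $\cF(T')$ then shows that for any $T, T' \in \mathcal{T}$, $|\cF(T)|$ and $|\cF(T')|$ agree up to $o(N)$, so all popular stars have the same size to leading order.

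The final step determines $|\mathcal{T}|$. Since $\mathcal{T}$ is a $t$-cover, $s = |\cF| \le \sum_{T \in \mathcal{T}} |\cF(T)|$; combining with the leading-order approximation $\tint(\cF) \approx \sum_{T \in \mathcal{T}} \binom{|\cF(T)|}{2}$ and the bound $\tint(\cF) \ge \tint(\cL_{n,k}(s))$ computed above, a convexity argument shows $|\mathcal{T}| \in \{r-1, r\}$. If $|\mathcal{T}| = r$, then $\sum_T |\cF(T)| \ge s$ together with $|\cF(T)| \le N$ forces every popular star to have size $(1-o(1))N$, yielding case (ii). If $|\mathcal{T}| = r-1$ and every star is exactly full, we obtain case (i); otherwise all $r-1$ popular stars are only almost full, giving case (iii). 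The principal obstacle is that the overlap $|\cF(T_1) \cap \cF(T_2)|$ depends sensitively on $|T_1 \cap T_2|$, which ranges over $\{0, 1, \ldots, t-1\}$, and the maximal overlap $\binom{n-t-1}{k-t-1}$ is only a factor of $(k-t)/(n-t)$ smaller than $N$. Carefully tracking lower-order terms between the candidate configurations and $\cL_{n,k}(s)$, whose star centers pairwise share $t-1$ elements, is what rules out intermediate values of $|\mathcal{T}|$ or intermediate star sizes and pins down the three listed structures.
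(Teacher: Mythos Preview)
Your approach is essentially the paper's three--step argument: find a popular $t$-set, show the popular $t$-sets form a bounded $t$-cover with all stars of approximately equal size, then compare $\tint(\cF)$ with $\tint(\cL_{n,k}(s))$ to pin the cover size to $r-1$ or $r$.

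There is, however, one genuine gap. Both of your shifting arguments --- the one showing $\mathcal{T}$ is a $t$-cover, and the exchange showing $|\cF(T)| = |\cF(T')| + o(N)$ --- tacitly assume that at least one popular $t$-star is not full, so that there is room to move a set into it. If every popular $t$-star is full, neither replacement is available and your arguments stall. (Concretely, nothing you have written yet rules out $\mathcal T$ consisting of, say, two full $t$-stars with many uncovered sets left over.) The paper avoids this by first testing whether $\cF$ already contains $r-1$ full $t$-stars (case~(i)); if not, it lets $\cF_1$ be the union of the $p \le r-2$ full $t$-stars present, sets $\cF_2 = \cF \setminus \cF_1$, derives $\tint(\cF_2) \ge \frac{r-p-1+\alpha^2}{2}N^2 + o(N^2)$, and runs the entire popular-set/cover/equal-size argument on $\cF_2$. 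Since $\cF_2$ contains no full $t$-star by construction, every replacement is legal. You should insert this reduction at the outset; once you do, the rest of your outline matches the paper's proof.

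One minor remark: your final paragraph suggests that ruling out intermediate cover sizes requires tracking the lower-order terms governed by $|T_i \cap T_j|$. For this proposition it does not: leading-order asymptotics suffice, since the inequality $\frac{(r-1+\alpha)^2}{m} \ge r-1+\alpha^2 - o(1)$ (equivalently $\sum_i (x_i - \bar x)^2 = o(1)$ in the paper's formulation) already forces $m \in \{r-1,r\}$ together with $\alpha = o(1)$ or $\alpha = 1-o(1)$. The delicate second-order analysis you describe only enters later, in Lemmas~\ref{lem:lexsmall}--\ref{lem:fullstars}, when one must distinguish non-isomorphic unions of full $t$-stars.
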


Once we have determined the large-scale structure of the extremal systems, the following lemmas allow us to analyze the lower-order terms and determine that the lexicographical ordering is indeed optimal.

\medskip

Lemma \ref{lem:lexsmall} shows that of all unions of $r$ full $t$-stars, the lexicographical ordering contains the fewest sets.  This may seem to contradict the lexicographical ordering being optimal, given that the heuristic given by \eqref{eqn:heuristic} suggests that it is optimal to take as few $t$-stars as possible, and hence we might try to make the union of these stars accommodate as many sets as possible.  However, it is because there is more overlap between the lexicographical $t$-stars that there are fewer $t$-disjoint pairs between stars.

\begin{LEMMA} \label{lem:lexsmall}
Suppose $n \ge n_2(k,t,r)$, and let $\cF$ be the union of $r$ full $t$-stars in $\binom{[n]}{k}$.  Then $|\cF| \ge \binom{n-t+1}{k-t+1} - \binom{n-t-r+1}{k-t+1}$, with equality if and only if $\cF$ is isomorphic to $\cL_{n,k}\left( \binom{n-t+1}{k-t+1} - \binom{n-t-r+1}{k-t+1} \right)$.
\end{LEMMA}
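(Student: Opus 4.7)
The plan is to reformulate $\cF$ as the upper shadow of its set of $t$-set centers and apply the Kruskal--Katona theorem. Writing $\cH = \{X_1, \dots, X_r\} \subset \binom{[n]}{t}$ for the $r$ centers, we have
\[
    \cF = \nabla^k \cH := \bigl\{F \in \binom{[n]}{k} : X \subset F \text{ for some } X \in \cH\bigr\},
\]
so the lemma reduces to minimizing the upper shadow $|\nabla^k \cH|$ subject to $|\cH| = r$. The key move is to pass to complements: via the bijection $X \mapsto [n]\setminus X$ between $\binom{[n]}{t}$ and $\binom{[n]}{n-t}$, the condition $X \subset F$ becomes $[n]\setminus F \subset [n]\setminus X$, giving $|\nabla^k \cH| = |\partial^{n-k} \cH^c|$, the iterated lower shadow (down to level $n-k$) of $\cH^c := \{[n]\setminus X : X \in \cH\}$.

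The Kruskal--Katona theorem (iterated $k-t$ times) now tells us that, over $|\cH^c| = r$, the quantity $|\partial^{n-k} \cH^c|$ is minimized precisely when $\cH^c$ is an initial segment of the colex order on $\binom{[n]}{n-t}$. Since complementation reverses the colex order, the corresponding extremal $\cH$ is the \emph{final} $r$ sets in colex on $\binom{[n]}{t}$; and because our hypothesis $n \ge n_2(k,t,r)$ guarantees $r \le n-t+1$, these last $r$ sets all contain the common $(t-1)$-set $\{n-t+2, \dots, n\}$, hence form a $(t-1)$-star. After relabeling the ground set, this family is exactly $\cC := \{[t-1] \cup \{t+i-1\} : i \in [r]\}$, and its upper shadow consists of the $k$-sets containing $[t-1]$ and meeting $\{t, \dots, t+r-1\}$, which counts directly to $\binom{n-t+1}{k-t+1} - \binom{n-t-r+1}{k-t+1}$. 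This establishes the inequality and identifies the extremal family as $\cL_{n,k}(s)$ up to isomorphism.

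For the equality characterization, one invokes the uniqueness of the Kruskal--Katona minimizer in our regime, or equivalently a careful strict-inequality analysis of each iterated shadow step when $n$ is sufficiently large compared to $k, t$, and $r$. The main subtlety lies precisely in this uniqueness step: a naive shifting argument alone does not suffice, because shifted families of $t$-sets need not be $(t-1)$-stars --- for instance, the triangle $\{\{1,2\}, \{1,3\}, \{2,3\}\} \subset \binom{[n]}{2}$ is shifted yet is not a $1$-star --- so ruling out such exotic configurations is exactly where the hypothesis $n \ge n_2(k,t,r)$ is genuinely used.
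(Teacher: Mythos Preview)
Your approach via Kruskal--Katona is genuinely different from the paper's and is correct for the inequality. The paper argues by direct inclusion--exclusion: it computes $|\cL|$ up to $O(n^{k-t-2})$, bounds $|\cF|$ from below by the Bonferroni inequality, and then shows that unless every pair of centers satisfies $|X_{i_1}\cap X_{i_2}|=t-1$ and every triple satisfies $|X_{i_1}\cup X_{i_2}\cup X_{i_3}|\ge t+2$, one has $|\cF|>|\cL|$; these two conditions force a common $(t-1)$-core. Your route---identify $\cF$ as $\nabla^k\cH$, complement to a lower-shadow problem, and apply Kruskal--Katona---is more conceptual and in fact yields the inequality for all $n$ with $r\le n-t+1$, not just for $n\ge n_2$. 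What the paper's hands-on computation buys is a completely self-contained treatment of the equality case.

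That equality case is where your write-up has a real gap. Saying ``one invokes the uniqueness of the Kruskal--Katona minimizer in our regime'' is not enough: uniqueness in Kruskal--Katona is well known to be delicate, depending on the cascade representation of $r$, and here $r$ is small relative to the level $n-t$, which is precisely the degenerate end of the cascade. You would need either a precise citation (e.g.\ F\"uredi--Griggs or M\"ors) together with a check that it applies to the iterated shadow in this range, or an ad hoc argument. Your final paragraph does not supply this: the observation that the triangle $\{\{1,2\},\{1,3\},\{2,3\}\}$ is shifted but not a $1$-star is correct, but you then assert that ruling it out ``is exactly where the hypothesis $n\ge n_2(k,t,r)$ is genuinely used.'' That is not right---a direct computation shows the triangle has strictly larger upper shadow than the star $\{\{1,2\},\{1,3\},\{1,4\}\}$ for every $n\ge k+1$, so the triangle is excluded by the inequality itself, not by $n$ being large. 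The only place your argument actually uses $n$ large is to guarantee $r\le n-t+1$, so that the colex initial segment on $\binom{[n]}{n-t}$ consists of sets sharing a common element (and hence the complementary $t$-sets form a $(t-1)$-star). To complete the uniqueness, you should either carry out the strict-inequality analysis you allude to, or revert to the paper's pairwise/triple-intersection argument for that step.
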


The next lemma shows that if we have $r$ full $t$-stars, and add a new set to the system, we minimize the number of new $t$-disjoint pairs created when we have the lexicographical initial segment.

\begin{LEMMA} \label{lem:addset}
Suppose $n \ge n_2(k,t,r)$, let $\cL = \cL_{n,k}\left( \binom{n-t+1}{k-t+1} - \binom{n-t-r+1}{k-t+1} \right)$ be the first $r$ full $t$-stars in the lexicographical order, and let $L$ be a set containing $\{1, 2, \hdots, t-1\}$ that is not in $\cL$.  Let $\cF$ be the union of $r$ full $t$-stars 
with centers $\{X_1, X_2, \hdots, X_r\}$, and let $F$ be any $k$-set not in $\cF$.  Then $\disj_t(F,\cF) \ge \disj_t(L,\cL)$, which equality if and only if $\cF \cup \{F\}$ is isomorphic to $\cL \cup \{L\}$.
\end{LEMMA}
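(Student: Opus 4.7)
The plan is to enumerate $\tdisj(F, \cF)$ via the canonical decomposition of each $t$-disjoint $G \in \cF$ as $G = U \sqcup V$, where $U = G \cap F$ and $V = G \setminus F$. Since $F \notin \cF$, every center satisfies $a_i := |X_i \cap F| \le t-1$; set $A_i = X_i \cap F$ and $B_i = X_i \setminus F$. Then $|G \cap F| < t$ is equivalent to $|U| \le t-1$, and $X_i \subset G$ is equivalent to $A_i \subset U$ and $B_i \subset V$. Letting $I(U) = \{i : A_i \subset U\}$, this yields
\[
\tdisj(F, \cF) = \sum_{j=0}^{t-1} \sum_{U \in \binom{F}{j}} \left| \bigcup_{i \in I(U)} \left\{ V \in \binom{[n]\setminus F}{k-j} : B_i \subset V \right\} \right|.
\]
For the lex pair $(L,\cL)$ every $A_i$ equals $[t-1]$ and every $B_i$ is a distinct singleton in $\{t, \ldots, t+r-1\}$, so only $U = [t-1]$ contributes and its value is exactly $\binom{n-k}{k-t+1} - \binom{n-k-r}{k-t+1}$; this is the target.

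The main case is when $a_i = t-1$ for all $i$. Partition the centers by the value of $A_i$: let $A$ range over the distinct $(t-1)$-subsets of $F$ appearing, with multiplicity $r_A$ so that $\sum_A r_A = r$, and let $d$ denote the number of classes. Since $|A_i| = |U| = t-1$, only $U = A$ (for some class $A$) has $I(U) \ne \emptyset$; contributions from different classes give disjoint families of $G$ (recovered via $U = G \cap F$); and within class $A$ the singletons $\{b_i\}_{A_i = A}$ are distinct points of $[n]\setminus F$, so the class contribution is $\binom{n-k}{k-t+1} - \binom{n-k-r_A}{k-t+1}$. The function $f(x) = \binom{n-k-x}{k-t+1}$ is convex (its second difference $\binom{n-k-x-2}{k-t-1}$ is non-negative), so under $r_A \ge 1$ and $\sum_A r_A = r$ we have $\sum_A f(r_A) \le (d-1) f(1) + f(r-d+1)$, with strict inequality whenever $d \ge 2$. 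A short rearrangement gives $\tdisj(F, \cF) \ge f(0) - f(r) = \tdisj(L, \cL)$, with equality iff $d = 1$ (all $A_i$ coincide), which is the lex-isomorphic configuration.

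In the remaining case where some $a_{i_0} \le t-2$, the aim is strict inequality. Write $\tdisj(F, \cF) = \sum_i D_i - \mathrm{Over}$, where $D_i$ counts $G \supset X_i$ with $|G \cap F| < t$ and $\mathrm{Over}$ corrects for sets in multiple stars. Direct binomial evaluation gives $D_{i_0} - \binom{n-k-1}{k-t} \ge (k-t+1)\binom{n-k-2}{k-t-1}$, a surplus of order $n^{k-t-1}$, and the other $D_i$ are at least their lex values $\binom{n-k-1}{k-t}$. The main obstacle is controlling $\mathrm{Over}$: the bound $|I(G)| - 1 \le \binom{|I(G)|}{2}$ gives $\mathrm{Over} \le \binom{r}{2}\binom{n-t-1}{k-t-1}$, while the lex overcount has the same leading order $\binom{r}{2}\binom{n-k-r}{k-t-1}$, so their difference is only of order $n^{k-t-2}$; for $n \ge n_2(k,t,r)$ sufficiently large, the surplus in $D_{i_0}$ dominates this gap, yielding $\tdisj(F, \cF) > \tdisj(L, \cL)$. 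Combining the cases, equality forces $a_i = t-1$ for all $i$ and $d = 1$, meaning $X_i = A \cup \{b_i\}$ for a common $(t-1)$-set $A \subset F$ and distinct $b_i \in [n]\setminus F$, which is exactly $\cL \cup \{L\}$ up to a relabeling of $[n]$.
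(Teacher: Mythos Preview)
Your argument is correct, and in the main case ($a_i=t-1$ for all $i$) it is genuinely different from, and sharper than, the paper's proof. The paper handles that case asymptotically: it applies a Bonferroni-type lower bound $\tdisj(F,\cF)\ge\sum_i\tdisj(F,\cF(X_i))-\sum_{i<j}\tdisj(F,\cF(X_i)\cap\cF(X_j))$, computes the pairwise overlap terms (which vanish unless $F\cap X_{i_1}=F\cap X_{i_2}$), and compares to $\tdisj(L,\cL)=r\binom{n-k-1}{k-t}-\binom{r}{2}\binom{n-k-2}{k-t-1}+O(n^{k-t-2})$. Your decomposition by $U=G\cap F$ gives instead the exact identity $\tdisj(F,\cF)=\sum_A\bigl[f(0)-f(r_A)\bigr]$ with $f(x)=\binom{n-k-x}{k-t+1}$, and the convexity of $f$ reduces the inequality to a clean one-line telescoping comparison; in particular you obtain the closed form $\tdisj(L,\cL)=\binom{n-k}{k-t+1}-\binom{n-k-r}{k-t+1}$ with no error term. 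This buys you a proof of the main case that does not rely on $n$ being large. One small imprecision: the convexity bound $\sum_A f(r_A)\le(d-1)f(1)+f(r-d+1)$ is \emph{not} strict whenever $d\ge2$ (equality holds at a vertex of the simplex); the strictness for $d\ge2$ actually enters at the next step, via $(d-1)\bigl[f(0)-f(1)\bigr]>f(r-d+1)-f(r)$, which follows since each of the $d-1$ first differences $f(j)-f(j+1)=\binom{n-k-j-1}{k-t}$ on the right has $j\ge1$ and is therefore strictly smaller than $f(0)-f(1)$.

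For the case $a_{i_0}\le t-2$ your approach is essentially the same as the paper's: both write $\tdisj(F,\cF)=\sum_i D_i-\mathrm{Over}$, observe a surplus of order $n^{k-t-1}$ in $D_{i_0}$, and bound the difference of overcounts by $O(n^{k-t-2})$. Your assertion that the remaining $D_i$ are at least $\binom{n-k-1}{k-t}$ deserves a word of justification when $a_i\le t-2$ (the paper is equally terse here); a one-line asymptotic check from the explicit formula $D_i=\sum_{c=0}^{t-1-a_i}\binom{k-a_i}{c}\binom{n-k-t+a_i}{k-t-c}$ suffices, since for $a_i\le t-2$ the two leading terms already exceed $\binom{n-k-1}{k-t}$.
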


However, the comparison in Lemma \ref{lem:addset} is not entirely fair, as Lemma \ref{lem:lexsmall} shows that $\cL$ will have fewer sets than $\cF$, while we ought to be comparing systems of the same size.  We do this in our final lemma, in the cleanest case when the system $\cF$ is a union of full $t$-stars.

\begin{LEMMA} \label{lem:fullstars}
Suppose $n \ge n_2(k,t,r)$, let $\cF$ be the union of $r$ full $t$-stars with centers $X_i$, $1 \le i \le r$, and let $\cL = \cL_{n,k}(\card{\cF})$.  Then $\disj_t(\cF) \ge \disj_t(\cL)$, with equality if and only if $\cF$ is isomorphic to $\cL$.
\end{LEMMA}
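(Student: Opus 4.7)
The plan is to proceed by induction on the number of $t$-stars $r$. The base case $r=1$ is immediate since any single full $t$-star is isomorphic to $\cA_{[t]} = \cL_{n,k}(\binom{n-t}{k-t})$, so both $\cF$ and $\cL$ have $\disj_t$ equal to zero.

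For the inductive step with $r \ge 2$, set $s_r = \binom{n-t+1}{k-t+1} - \binom{n-t-r+1}{k-t+1}$. If $|\cF| = s_r$, Lemma \ref{lem:lexsmall} forces $\cF \cong \cL$ and equality holds, so I may assume $|\cF| > s_r$ and aim to prove strict inequality. The idea is to decompose $\cF = \cF' \cup \cH$, where $\cF' := \cA_{X_1} \cup \cdots \cup \cA_{X_{r-1}}$ is a union of $r-1$ full $t$-stars and $\cH := \cA_{X_r} \setminus \cF'$. All sets in $\cH$ share the center $X_r$ and are pairwise $t$-intersecting, giving
\[
\disj_t(\cF) = \disj_t(\cF') + \sum_{F \in \cH} \disj_t(F, \cF').
\]
I will bound the first term via the induction hypothesis, $\disj_t(\cF') \ge \disj_t(\cL_{n,k}(|\cF'|))$, and apply Lemma \ref{lem:addset} to $\cF'$ (a union of $r-1$ $t$-stars) to bound each summand in the second term below by $d_{r-1} := \binom{n-k}{k-t+1} - \binom{n-k-r+1}{k-t+1}$. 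An analogous decomposition writes $\disj_t(\cL)$ as $\disj_t(\cL_{n,k}(|\cF'|))$ plus the contributions from adding $|\cH|$ sets to $\cL_{n,k}(|\cF'|)$ in lex order; each such contribution equals $d_{r-1}$ while the added set lies in the $r$-th lex $t$-star and jumps to $d_r > d_{r-1}$ once we cross into the $(r+1)$-th. Verifying $\disj_t(\cF) \ge \disj_t(\cL)$ then reduces to comparing these cross-term sums.

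The main obstacle is that the trivial estimate $\sum_{F \in \cH} \disj_t(F,\cF') \ge |\cH| \cdot d_{r-1}$ is too weak: when $|\cF| > s_r$ the lex side includes $d_r$-contributions that the sum $|\cH| \cdot d_{r-1}$ cannot match. Two refinements will close the gap. First, an inclusion-exclusion argument based on Lemma \ref{lem:lexsmall} shows $|\cH| \ge s_r - s_{r-1}$, with the excess $|\cH| - (s_r - s_{r-1})$ measuring precisely how much the centers $X_1, \ldots, X_r$ deviate from sharing a common $(t-1)$-set. Second, the equality clause of Lemma \ref{lem:addset} implies that whenever $F \in \cH$ does not extend $\cF'$ in the lex pattern, one has $\disj_t(F, \cF') > d_{r-1}$ strictly, and a quantitative refinement (implicit in that lemma's proof) provides an extra contribution of at least $d_r - d_{r-1}$ per such $F$. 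Balancing the excess in $|\cH|$ against this per-set surplus yields the desired inequality. Tracking the equality conditions of Lemmas \ref{lem:lexsmall} and \ref{lem:addset} throughout then shows that equality forces $|\cF| = s_r$ and $\cF \cong \cL$, since for $|\cF| > s_r$ the family $\cL$ is not itself a union of $r$ full $t$-stars.
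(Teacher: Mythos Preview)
Your inductive strategy is natural but the crucial quantitative step is wrong, and once it is corrected the argument no longer closes.

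You assert that for each $F\in\cH$ which does not extend $\cF'$ in the lex pattern, the proof of Lemma~\ref{lem:addset} (applied to the $r-1$ stars of $\cF'$) supplies a surplus of at least $d_r-d_{r-1}$ over the baseline $d_{r-1}$. But $d_r-d_{r-1}=\binom{n-k-r}{\,k-t\,}=\Theta(n^{k-t})$, whereas a look at that proof shows the actual surplus is only of order $\binom{n-k-2}{k-t-1}=\Theta(n^{k-t-1})$: when $|F\cap X_i|\le t-2$ for some $i$ one gains $(k-t+1)\binom{n-k-2}{k-t-1}$, and when all $|F\cap X_i|=t-1$ but the common $(t-1)$-set fails one gains at most $\binom{r-1}{2}\binom{n-k-2}{k-t-1}$. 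Your per-set claim is therefore off by a factor of $n$. The total surplus over $\cH$ is $|\cH|\cdot\Theta(n^{k-t-1})=\Theta(n^{2k-2t-1})$, which is the \emph{same} order as the gap $(|\cF|-s_r)(d_r-d_{r-1})$, so the comparison comes down to the constants --- and these do not obviously fall in your favor. For instance, take $r=3$ with centers $X_1,X_2,X_3\in\binom{[t+1]}{t}$. Then $\cF'$ is isomorphic to $\cL_{n,k}(s_2)$, so the induction hypothesis gives no slack; every $F\in\cH$ fails the lex pattern, and one computes that the total surplus and the gap have the \emph{same} leading coefficient $\frac{1}{(k-t)!\,(k-t-1)!}n^{2k-2t-1}$. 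The inequality you need thus lives entirely in the lower-order terms, and your framework gives no control over them.

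The paper bypasses this difficulty by abandoning induction and instead computing $\tint(\cF)$ and $\tint(\cL)$ directly. It first shows that if some pair of centers satisfies $|X_i\cap X_j|\le t-2$ then a second-order estimate already gives $\tint(\cL)>\tint(\cF)$; this reduces to the case where all pairwise intersections have size $t-1$, which forces either the lex configuration or the ``clique'' configuration $\{X_i\}\subset\binom{[t+1]}{t}$. The latter is then dispatched by an explicit calculation down to the $\binom{n-t}{k-t-1}^2$ term. In short, the paper confronts head-on exactly the second-order comparison that your inductive bookkeeping tries to avoid but cannot.
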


Armed with Proposition \ref{prop:tstars} and these three lemmas, whose proofs we defer until later in this section, we now show how to deduce Theorem \ref{thm:tdisjoint}.

\begin{proof}[Proof of Theorem \ref{thm:tdisjoint}]
Let $r$ be such that $\binom{n-t+1}{k-t+1} - \binom{n-t-r+2}{k-t+1} < s \le \binom{n-t+1}{k-t+1} - \binom{n-t-r+1}{k-t+1}$.  In this range $\cL_{n,k}(s)$ consists of $r-1$ full $t$-stars, with the remaining sets forming a partial $r$th $t$-star.  If $r = 1$, then $\cL_{n,k}(s)$ is $t$-intersecting, and therefore clearly optimal.  Hence we may assume $r \ge 2$, and in particular this implies $s = \Omega(n^{k-t})$.

\medskip

Suppose $\cF$ is an optimal system of size $s$.  By analyzing the three cases in Proposition \ref{prop:tstars} in turn, we shall show that $\tdisj(\cF) \ge \tdisj(\cL_{n,k}(s))$, thus completing the proof of Theorem \ref{thm:tdisjoint}.

\medskip

\noindent \underline{Case $(i)$:}  Suppose $\cF$ contains $r-1$ full $t$-stars, whose union we shall denote by $\cF_1$, and $s_2 = s - \card{\cF_1}$ other sets, denoted by $\cF_2$.  We then have
 \begin{align*}
    \tdisj(\cF) &= \tdisj(\cF_1) + \tdisj(\cF_1,\cF_2) + \tdisj(\cF_2) \\
        &\ge \tdisj(\cF_1) + \tdisj(\cF_1,\cF_2) \\
        &= \tdisj(\cF_1) + \sum_{F \in \cF_2} \tdisj(F,\cF_1) \\
        &\ge \tdisj(\cF_1) + s_2 \cdot \tdisj(F_0,\cF_1),
 \end{align*}
where $F_0 \in \cF_2$ minimizes $\tdisj(F,\cF_1)$.

\medskip

Let $\cL = \cL_{n,k}(s)$ be the corresponding lexicographical initial segment, $\cL_1 = \cL_{n,k}(\card{\cF_1})$ be the first $\card{\cF_1}$ sets in the lexicographical ordering, and let $\cL_2 = \cL \setminus \cL_1$ be the next $s_2$ sets.  By Lemma \ref{lem:lexsmall}, it follows that $\cL_1$ consists of at least $r-1$ full $t$-stars, and so $\cL_2$ lies entirely within the $r$th lexicographical $t$-star, and is thus $t$-intersecting.  Hence
 \begin{align*}
    \tdisj(\cL) &= \tdisj(\cL_1) + \tdisj(\cL_1,\cL_2) + \tdisj(\cL_2) \\
        &= \tdisj(\cL_1) + \tdisj(\cL_1, \cL_2) \\
        &= \tdisj(\cL_1) + \sum_{L \in \cL_2} \tdisj(L,\cL_1) \\
        &\le \tdisj(\cL_1) + s_2 \cdot \tdisj(L_0,\cL_1),
 \end{align*}
where $L_0 \in \cL_2$ maximizes $\tdisj(L,\cL_1)$ (in fact, by symmetry, this is equal for all $L \in \cL_2$).

Note that $L_0$ will belong to the $r$th $t$-star of $\cL$, and hence $\tdisj(L_0, \cL_1)$ will only count $t$-disjoint pairs between $L_0$ and the union of the first $r-1$ $t$-stars of $\cL_1$.  By Lemma \ref{lem:addset}, we have $\tdisj(F_0, \cF_1) \ge \tdisj(L_0,\cL_1)$, and by Lemma \ref{lem:fullstars}, we have $\tdisj(\cF_1) \ge \tdisj(\cL_1)$, from which we deduce $\tdisj(\cF) \ge \tdisj(\cL)$, as required.

\medskip

\noindent \underline{Case $(ii)$:}  In this case we have $r$ almost full $t$-stars.  Using a complementary argument, we shall reduce this to case $(i)$.

\medskip

Suppose $\cF$ is the union of $r$ almost full $t$-stars with centers $\{ X_1, X_2, \hdots, X_r \}$, let $\cA = \cup_{i=1}^r \{ A \in \binom{[n]}{k} : X_i \subset A \}$ be the system of all sets containing some $X_i$, and let $\cG = \cA \setminus \cF$.  On account of the $t$-stars being almost full, we have $\card{\cG} = o(n^{k-t})$.

Running the same complementary argument as in the proof of Theorem \ref{thm:disjpairs}, we have
\begin{equation} \label{eqn:tcomplement}
	\tdisj(\cF) = \tdisj(\cA) - \tdisj(\cG,\cA) + \tdisj(\cG) = \tdisj(\cA) - \sum_{G \in \cG} \tdisj(G,\cA) + \tdisj(\cG).
\end{equation}

To minimize $\tdisj(\cF)$, we seek to maximize $\sum_{G \in \cG} \tdisj(G, \cA)$ while minimizing $\tdisj(\cG)$.  We shall obtain these extrema by shifting the system so that the missing sets, $\cG$, will all belong to one of the $t$-stars $\cA(X_i)$.  In this case, the shifted system, $\cF'$, will contain $r-1$ full $t$-stars.  Hence we will have reduced the problem to case $(i)$, and so $\tdisj(\cF) \ge \tdisj(\cF') \ge \tdisj(\cL_{n,k}(s))$, as desired.

\medskip

Note that when $\cG$ is a subset of one of the $t$-stars, $\cG$ is $t$-intersecting, and so $\tdisj(\cG) = 0$ is minimized.  We now show how to choose which $t$-star $\cG$ should belong to in order to maximize $\sum_{G \in \cG} \tdisj(G,\cA)$.

Since $\cA$ is of fixed size, maximizing $\tdisj(G,\cA)$ is equivalent to minimizing $\tint(G,\cA)$.  For $G \in \cG$, $\tint(G, \cA)$ is determined by the intersections $\{G \cap X_i: 1 \le i \le r\}$.  There are only a bounded number of possibilities for these intersections, and so we may choose one which minimizes $\tint(G, \cA)$, under the restriction that $X_i \subset G$ for some $i$, since $G \in \cA$.  By \eqref{eqn:heuristic}, the number of $t$-intersecting pairs between $G$ and a $t$-star it is not in is $o(s)$, and so this minimum occurs when $G$ contains some $X_i$ and no other elements from $\cup_j X_j \setminus X_i$.  The number of choices for the set $G$ is then at least $\binom{n-rt}{k-t}$, since after choosing the $t$ elements of $X_i$, we wish to avoid the remaining elements in $\cup_j X_j$, of which there are at most $(r-1)t$.  Since $\binom{n-rt}{k-t} \ge \card{\cG} = o(n^{k-t})$, we may choose all $G \in \cG$ to come from the $t$-star with center $X_i$ in order to minimize the right hand side of \eqref{eqn:tcomplement}.  We have thus resolved case $(ii)$.

\medskip

\noindent \underline{Case $(iii)$:}  In this case we have $r-1$ almost full $t$-stars.  Since the size of this system is at most $(r-1) \binom{n-t}{k-t}$, while the size of the first $r-1$ $t$-stars in $\cL_{n,k}(s)$ is $\binom{n-t+1}{k-t+1} - \binom{n-t-r+2}{k-t+1} = (r-1) \binom{n-t}{k-t} + o(n^{k-t})$, we can conclude that $r$th partial $t$-star in $\cL_{n,k}(s)$ has only $o(n^{k-t})$ sets.

\medskip

Given the system $\cF$, we shall construct a larger system $\cF'$ by filling the $r-1$ almost full $t$-stars.  Suppose we have to add $s_1$ sets in order to do so.  Note that since the $t$-stars were almost full, we have $s_1 = o(n^{k-t})$.  Since each of the $s_1$ sets is added to an almost full $t$-star, it contributes at least $(1 - o(1)) \binom{n-t}{k-t}$ $t$-intersecting pairs.  Hence $\tint(\cF') \ge \tint(\cF) + (1 - o(1)) s_1 \binom{n-t}{k-t}$.

On the other hand, consider adding the same number of sets to the lexicographical initial segment.  The sets in $\cL_{n,k}(s + s_1) \setminus \cL_{n,k}(s)$ all belong only to the $r$th $t$-star, which has only $o(n^{k-t})$ sets.  Our calculation in \eqref{eqn:heuristic} shows that each such set also only gains $o(n^{k-t})$ $t$-intersections from the other stars, and so we have $\tint(\cL_{n,k}(s+s_1)) \le \tint(\cL_{n,k}(s)) + s_1 \cdot o(n^{k-t})$.

\medskip

Now $\cF'$ consists of $r-1$ full $t$-stars, and so by Lemma \ref{lem:fullstars}, we have $\tdisj(\cF') \ge \tdisj(\cL_{n,k}(s+s_1))$, or, equivalently, $\tint(\cF') \le \tint(\cL_{n,k}(s + s_1))$.  Thus $\tint(\cF) + (1 - o(1)) s_1 \binom{n-t}{k-t} \le \tint(\cL_{n,k}(s)) + s_1 \cdot o(n^{k-t})$, and so $\tint(\cF) \le \tint(\cL_{n,k}(s))$, with a strict inequality unless $s_1 = 0$.  This implies $\tdisj(\cF) \ge \tdisj(\cL_{n,k}(s))$, as required.

\medskip

Hence we may conclude that for any system $\cF$ with $s$ sets, we have $\tdisj(\cF) \ge \tdisj(\cL_{n,k}(s))$, proving Theorem \ref{thm:tdisjoint}.
\end{proof}

By analyzing the cases when we have equality, and using the fact that in Lemmas \ref{lem:lexsmall}, \ref{lem:addset} and \ref{lem:fullstars} we only have equality when the systems are isomorphic to the lexicographical ordering, we can characterize all extremal systems.

\begin{COR} \label{cor:toptimal}
Suppose $n \ge n_2(k, \ell, t)$, and $0 \le s \le \binom{n-t+1}{k-t+1} - \binom{n-t-\ell+1}{k-t+1}$, and $\cF \subset \binom{[n]}{k}$ minimizes the number of $t$-disjoint pairs over all systems of $s$ sets.  Then all sets $F \in \cF$ share some common $(t-1)$-set $X$, and $\cF' = \{ F \setminus X : F \in \cF \}$ minimizes the number of disjoint pairs over all systems of $s$ sets in $\binom{[n] \setminus X}{k - t + 1}$.
\end{COR}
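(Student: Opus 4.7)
The plan is to deduce the corollary directly from Theorem \ref{thm:tdisjoint} by extracting the equality case in its proof, and then to transport the optimality statement to the reduced problem via the natural bijection $F \mapsto F \setminus X$.

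First I would argue that any extremal $\cF$ must be isomorphic to $\cL_{n,k}(s)$. By Theorem \ref{thm:tdisjoint}, $\tdisj(\cF) = \tdisj(\cL_{n,k}(s))$, and the proof of that theorem places $\cF$ into one of the three cases of Proposition \ref{prop:tstars}. In Case $(i)$, writing $\cF = \cF_1 \cup \cF_2$ with $\cF_1$ the union of $r-1$ full $t$-stars, the proof establishes the chain
\[ \tdisj(\cF) \ge \tdisj(\cF_1) + s_2 \cdot \tdisj(F_0, \cF_1) \ge \tdisj(\cL_1) + s_2 \cdot \tdisj(L_0, \cL_1) = \tdisj(\cL_{n,k}(s)), \]
so equality throughout forces $\tdisj(\cF_2) = 0$, $\tdisj(F, \cF_1)$ constant on $\cF_2$, $\tdisj(\cF_1) = \tdisj(\cL_1)$, and $\tdisj(F_0, \cF_1) = \tdisj(L_0, \cL_1)$; the equality clauses of Lemmas \ref{lem:fullstars} and \ref{lem:addset} then force $\cF \cong \cL_{n,k}(s)$. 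Cases $(ii)$ and $(iii)$ are reduced to Case $(i)$ in the original proof via the complementary argument and by filling the almost-full $t$-stars, and in each reduction the equality clauses of the same lemmas propagate to yield the same isomorphism.

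Since every set in $\cL_{n,k}(s)$ contains the $(t-1)$-set $[t-1]$, the isomorphism $\cF \cong \cL_{n,k}(s)$ transports this property to $\cF$: there exists $X \in \binom{[n]}{t-1}$ with $X \subset F$ for every $F \in \cF$. Set $\cF' = \{F \setminus X : F \in \cF\}$, a $(k-t+1)$-uniform system of size $s$ on the $(n-t+1)$-element ground set $[n] \setminus X$. Because $F_1 \cap F_2 \supseteq X$ for every $F_1, F_2 \in \cF$,
\[ |F_1 \cap F_2| < t \iff (F_1 \setminus X) \cap (F_2 \setminus X) = \emptyset, \]
and therefore $\tdisj(\cF) = \disj(\cF')$. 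If some $(k-t+1)$-uniform system $\cH'$ on $[n] \setminus X$ of size $s$ had $\disj(\cH') < \disj(\cF')$, then $\cH := \{H \cup X : H \in \cH'\}$ would be a $k$-uniform system on $[n]$ of size $s$ with $\tdisj(\cH) = \disj(\cH') < \disj(\cF') = \tdisj(\cF)$, contradicting the optimality of $\cF$. Hence $\cF'$ is extremal in the reduced problem.

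The main technical obstacle is the first step: verifying that equality in Theorem \ref{thm:tdisjoint} really forces isomorphism to $\cL_{n,k}(s)$ requires revisiting each of the three cases of Proposition \ref{prop:tstars} and confirming that in the equality clauses of Lemmas \ref{lem:lexsmall}, \ref{lem:addset}, and \ref{lem:fullstars} only the lex configuration survives—in particular, that in Case $(i)$ the uniform value of $\tdisj(F, \cF_1)$ across all $F \in \cF_2$ pins down not just each individual extension but the collective structure of $\cF_2$ relative to $\cF_1$.
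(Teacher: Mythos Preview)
Your overall strategy---trace equality through the proof of Theorem~\ref{thm:tdisjoint}, invoke the equality clauses of Lemmas~\ref{lem:lexsmall}, \ref{lem:addset}, \ref{lem:fullstars}, then pass to the reduced system via $F \mapsto F \setminus X$---is exactly what the paper does (the paper in fact gives only the one-sentence remark preceding the corollary). Your bijection argument for the second half is clean and correct.

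However, the intermediate claim that every extremal $\cF$ is isomorphic to $\cL_{n,k}(s)$ is \emph{false}, and this is not a technical obstacle but a genuine error. Concretely, take $t=2$, $k=4$, $r=2$, and $s = \binom{n-2}{2} + 3$. Let $\cF_1$ be the full $2$-star with center $\{1,2\}$. Then both
\[
\cF_2 = \{\,\{1,3,4,5\},\{1,3,4,6\},\{1,3,4,7\}\,\} \quad\text{and}\quad \cF_2' = \{\,\{1,3,4,5\},\{1,3,4,6\},\{1,3,5,6\}\,\}
\]
are $2$-intersecting, every set contains $\{1\}$, and each set $F$ has the same value of $\tdisj(F,\cF_1) = \binom{n-5}{2}$. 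So $\cF_1 \cup \cF_2$ and $\cF_1 \cup \cF_2'$ are both extremal, yet they are not isomorphic (in the first, the three extra sets share a common triple; in the second they do not). More generally, the corollary itself, combined with Proposition~\ref{prop:structure}, shows there are many non-isomorphic extremal systems: any $\cF'$ satisfying either clause of Proposition~\ref{prop:structure} lifts to an extremal $\cF$.

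The fix is to aim for less. In Case~$(i)$, the equality clause of Lemma~\ref{lem:fullstars} forces the $r-1$ centers of $\cF_1$ to share a common $(t-1)$-set $X$; the equality clause of Lemma~\ref{lem:addset}, applied to each $F \in \cF_2$, then forces $X \subset F$. That is all you need for part~(a) of the corollary, and your bijection argument then gives part~(b). Cases~$(ii)$ and~$(iii)$ reduce to Case~$(i)$ exactly as you say. So drop the isomorphism claim and extract only the common $(t-1)$-set; the rest of your write-up stands.
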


It remains to prove the proposition and lemmas.  We begin with a proof of Proposition \ref{prop:tstars}.  The strategy will be very similar to that of Theorem \ref{thm:disjpairs}; assuming the extremal system $\cF$ does not have $r-1$ full $t$-stars, we shall show there is some popular element (that is, an element contained in many sets of $\cF$).  From this we will deduce the existence of a small cover, and shall show that either case $(ii)$ or case $(iii)$ must hold.

\begin{proof}[Proof of Proposition \ref{prop:tstars}]
We may assume that $r \ge 2$, since if $r=1$, then case $(i)$ is trivially satisfied.  We first estimate the number of $t$-intersecting pairs in $\cL_{n,k}(s)$, so that we have a lower bound on $\tint(\cF)$ for any extremal system $\cF$.

\medskip

Note that $\cL_{n,k}(s)$ consists of $r-1$ full $t$-stars, with the remaining sets forming a partial $t$-star; suppose there are $\alpha \binom{n-t}{k-t}$ such sets.  Since there are $\binom{n-t-1}{k-t-1} = o(n^{k-t})$ sets common to any two $t$-stars, it follows that $s = (r - 1 + \alpha) \binom{n-t}{k-t} + o(n^{k-t})$.

Now any two sets in the same $t$-star are $t$-intersecting, while \eqref{eqn:heuristic} shows that a set is $t$-intersecting with $o(n^{k-t})$ sets from the other $t$-stars.  Hence for any extremal system $\cF$ we have the bound
\[ \tint(\cF) \ge \tint(\cL(s)) = (r-1) \binom{ \binom{n-t}{k-t}}{2} + \binom{\alpha \binom{n-t}{k-t}}{2} + o(n^{2(k-t)}) = \frac{ r - 1 + \alpha^2 }{2} \binom{n-t}{k-t}^2 + o(n^{2(k-t)}). \]

Suppose $\cF$ contains $p$ full $t$-stars.  If $p = r-1$, then case $(i)$ holds, and we are done.  Hence we may assume $0 \le p \le r-2$.  Let $\cF_1$ be the union of the $p$ full $t$-stars, and let $\cF_2 = \cF \setminus \cF_1$ be the remaining sets.

By the same reasoning as above, we must have $\card{\cF_1} = p \binom{n-t}{k-t} + o(n^{k-t})$, and $\tint(\cF_1) = \frac12 p \binom{n-t}{k-t}^2 + o(n^{2(k-t)})$.  No set $F \in \cF_2$ is in any of the $t$-stars of $\cF_1$, and so \eqref{eqn:heuristic} gives $\tint(\cF_1,\cF_2) = \card{\cF_2} \cdot o(n^{k-t}) = o(n^{2(k-t)})$.  Thus $\tint(\cF) = \tint(\cF_1) + \tint(\cF_1,\cF_2) + \tint(\cF_2) = \frac12 p \binom{n-t}{k-t}^2 + \tint(\cF_2) + o(n^{2(k-t)})$, and hence we must have
\[ \tint(\cF_2) \ge \frac{r - p - 1 + \alpha^2}{2} \binom{n-t}{k-t}^2 + o(n^{2(k-t)}) = \Omega(n^{2(k-t)}). \]

We shall now deduce the existence of a $t$-cover of size $r-p-1$ or $r-p$ for $\cF_2$, and then show that we must fall into case $(ii)$ or $(iii)$.  The first step is to find a $t$-set that is in many members of $\cF_2$.  Note that none of the $t$-stars in $\cF_2$ are full, and hence we may shift sets in $\cF_2$.

\begin{CLAIM} \label{clm:tpopset}
There is some set $X_1 \in \binom{[n]}{t}$ with $\card{\cF_2(X_1)} = \Omega(n^{k-t})$.
\end{CLAIM}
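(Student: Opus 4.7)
The plan is a straightforward double-counting / convexity argument on the $t$-degrees in $\cF_2$: I will show that $\sum_X |\cF_2(X)|^2$ is of order $n^{2(k-t)}$ while $\sum_X |\cF_2(X)|$ is only of order $n^{k-t}$, so that some single $|\cF_2(X_1)|$ must be $\Omega(n^{k-t})$.

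For the first moment, each $F \in \cF_2$ is counted in $|\cF_2(X)|$ for exactly $\binom{k}{t}$ choices of $X \in \binom{F}{t}$, so
\[ \sum_{X \in \binom{[n]}{t}} |\cF_2(X)| = \binom{k}{t} |\cF_2| = O(n^{k-t}), \]
using $|\cF_2| \le |\cF| = O(n^{k-t})$. For the second moment, swapping the order of summation gives
\[ \sum_{X \in \binom{[n]}{t}} |\cF_2(X)|^2 = \sum_{(F_1,F_2) \in \cF_2 \times \cF_2} \binom{|F_1 \cap F_2|}{t}, \]
and each ordered pair with $|F_1 \cap F_2| \ge t$ contributes at least $1$. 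Combined with the identity $\sum_F \tint(F, \cF_2) = 2\tint(\cF_2) - |\cF_2|$ recalled at the start of the section and the lower bound $\tint(\cF_2) = \Omega(n^{2(k-t)})$ proved in the paragraph immediately preceding the claim, this second moment is also $\Omega(n^{2(k-t)})$. Applying the standard inequality $\max_X |\cF_2(X)| \ge \sum_X |\cF_2(X)|^2 / \sum_X |\cF_2(X)|$ then yields an $X_1 \in \binom{[n]}{t}$ with $|\cF_2(X_1)| = \Omega(n^{k-t})$, as required.

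There is no substantive obstacle here: the hard work has already been done in bounding $\tint(\cF_2)$ from below by subtracting off the contribution of the $p$ full $t$-stars in $\cF_1$ from the estimate on $\tint(\cF) \ge \tint(\cL_{n,k}(s))$. The only point to check is that the implicit constants in the $\Omega(\cdot)$ notation depend only on $k, t$ and $\ell$, which is clear since $r \le \ell$ and the only combinatorial factors appearing are $\binom{k}{t}$ and analogous bounded quantities; this is compatible with the hypothesis $n \ge n_2(k,t,\ell)$ of the proposition, so the claim follows for $n$ sufficiently large in terms of $k, t, \ell$.
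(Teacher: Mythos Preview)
Your argument is correct and is essentially the same as the paper's, just phrased in second-moment language. The paper lets $X_1$ maximize $|\cF_2(X)|$, bounds $\tint(F,\cF_2)=|\cup_{X\in\binom{F}{t}}\cF_2(X)|\le\binom{k}{t}|\cF_2(X_1)|$ for every $F$, and sums over $F\in\cF_2$ using the identity $\sum_F\tint(F,\cF_2)=2\tint(\cF_2)-|\cF_2|$ together with $\tint(\cF_2)=\Omega(n^{2(k-t)})$ and $|\cF_2|=O(n^{k-t})$; your inequality $\max_X|\cF_2(X)|\ge\sum_X|\cF_2(X)|^2/\sum_X|\cF_2(X)|$ is the same chain of bounds after noting $\sum_F\sum_{X\in\binom{F}{t}}|\cF_2(X)|=\sum_X|\cF_2(X)|^2$.
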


\begin{proof}
Let $X_1 \in \binom{[n]}{t}$ be the set maximizing $\card{\cF_2(X)}$.  We have
\begin{align*}
	\tint(\cF_2) - \frac12 \card{\cF} &= \frac12 \sum_{F \in \cF_2} \tint(F,\cF_2) = \frac12 \sum_{F \in \cF_2} \card{ \cup_{X \in \binom{F}{t}} \cF_2(X) } \le \frac12 \sum_{F \in \cF_2} \sum_{X \in \binom{F}{t}} \card{ \cF_2(X) } \\
	&\le \frac12 \sum_{F \in \cF_2} \binom{k}{t} \card{ \cF_2(X_1)} = \frac12 \binom{k}{t} \card{\cF_2} \card{\cF_2(X_1)}.
\end{align*}

Since $\card{\cF_2} = (r - p - 1 + \alpha) \binom{n-t}{k-t} = O(n^{k-t})$, and $\tint(\cF_2) = \Omega(n^{2(k-t)})$, it follows that $\card{\cF_2(X_1)} = \Omega(n^{k-t})$, as desired.
\end{proof}

This allows us to find a small $t$-cover.

\begin{CLAIM} \label{clm:tcover}
$\cX = \left\{ X \in \binom{[n]}{t} : \card{\cF_2(X)} \ge \frac{1}{2\binom{k}{t}} \card{\cF_2(X_1)} \right\}$ is a $t$-cover for $\cF_2$.
\end{CLAIM}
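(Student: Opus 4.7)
The plan is a shifting/swap argument. Suppose for contradiction that some $F \in \cF_2$ contains no element of $\cX$, so that $\card{\cF_2(X)} < \frac{1}{2\binom{k}{t}} \card{\cF_2(X_1)}$ for every $X \in \binom{F}{t}$. Any $H \in \cF_2$ with $\card{F \cap H} \ge t$ must share some $t$-subset with $F$, so a union bound over the $\binom{k}{t}$ such subsets gives
\[ \tint(F,\cF_2) \le \sum_{X \in \binom{F}{t}} \card{\cF_2(X)} < \tfrac12 \card{\cF_2(X_1)}. \]
Moreover, $F \in \cF_2$ is not a member of any of the $p$ full $t$-stars comprising $\cF_1$, so $F$ contains none of their centres $Y_1, \ldots, Y_p$; applying the heuristic bound \eqref{eqn:heuristic} to each star yields $\tint(F,\cF_1) = O(n^{k-t-1}) = o(n^{k-t})$, and hence $\tint(F, \cF) < \tfrac12 \card{\cF_2(X_1)} + o(n^{k-t})$.

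Next I would exhibit a replacement set $G$ whose $t$-intersection count with $\cF$ is strictly larger. Any $k$-set $G$ with $X_1 \subset G$ is $t$-intersecting with every member of $\cF_2(X_1)$, so $\tint(G,\cF) \ge \card{\cF_2(X_1)} = \Omega(n^{k-t})$ by Claim \ref{clm:tpopset}. To see that such a $G$ exists outside $\cF$, note first that $X_1$ cannot be the centre of any full $t$-star in $\cF_1$, for then every $k$-set containing $X_1$ would already lie in $\cF_1$, forcing $\cF_2(X_1) = \emptyset$ and contradicting Claim \ref{clm:tpopset}. It follows that $\card{\cF_1(X_1)}$ is bounded by the number of $k$-sets containing $X_1 \cup Y_i$ for some $i$ with $Y_i \neq X_1$, giving $\card{\cF_1(X_1)} = O(n^{k-t-1})$. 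Since $\cF_2$ contains no full $t$-star, we also have $\card{\cF_2(X_1)} < \binom{n-t}{k-t}$, so $\card{\cF(X_1)} < \binom{n-t}{k-t}$ and an admissible $G$ exists.

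For $n \ge n_2(k,t,\ell)$ taken large enough, the lower-order terms are absorbed and we obtain $\tint(G, \cF) > \tint(F, \cF) + 1$. Replacing $F$ by $G$ in $\cF$ therefore strictly increases the number of $t$-intersecting pairs, and equivalently strictly decreases $\tdisj(\cF)$, contradicting the minimality of $\cF$. Hence every $F \in \cF_2$ must contain some $X \in \cX$, which is precisely the statement that $\cX$ is a $t$-cover for $\cF_2$. The one delicate point is verifying the existence of an admissible replacement $G$; this hinges on the observation that the popularity of $X_1$ in $\cF_2$ forbids $X_1$ from being a centre of any full $t$-star of $\cF_1$, after which the sizes of $\cF_1(X_1)$ and $\cF_2(X_1)$ are easy to control.
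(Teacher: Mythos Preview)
Your argument is correct and follows essentially the same approach as the paper's proof: both use a swap argument, bounding $\tint(F,\cF)$ via a union bound over $\binom{F}{t}$ and the heuristic \eqref{eqn:heuristic}, then replacing $F$ by a set $G \supset X_1$ to contradict optimality. You are more careful than the paper in justifying that an admissible $G \notin \cF$ exists (the paper simply asserts ``$\cF(X_1)$ is not a full $t$-star''), though your detour through separate bounds on $\card{\cF_1(X_1)}$ and $\card{\cF_2(X_1)}$ could be shortened by observing directly that since $X_1$ is not among the centres $Y_i$ of the full $t$-stars, $\cF(X_1)$ is not full.
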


\begin{proof}
Suppose not.  Then there is some $F \in \cF$ such that for all $X \in \binom{F}{t}$, $\card{\cF_2(X)} < \frac{1}{2\binom{k}{t}} \card{\cF_2(X_1)}$.  Thus $\tint(F, \cF_2) \le \sum_{X \in \binom{F}{t}} \card{\cF_2(X)} < \frac12 \card{ \cF_2(X_1)}$.  Since $F$ has $o(n^{k-t})$ $t$-intersecting pairs in $\cF_1$, it follows that $\tint(F,\cF) \le \frac12 \card{ \cF_2(X_1) } + o(n^{k-t})$.

If we were to replace $F$ with some set $G$ containing $X_1$, which is possible as $\cF(X_1)$ is not a full $t$-star, then we would create at least $\card{\cF_2(X_1)}$ $t$-intersecting pairs.  Since $\card{\cF_2(X_1)} = \Omega(n^{k-t})$, it follows that $\tint(G,\cF) > \tint(F,\cF)$, which contradicts $\cF$ being optimal.

Hence $\cX$ must be a $t$-cover for $\cF_2$, as claimed.
\end{proof}

\begin{CLAIM} \label{clm:tcoversmall}
$\card{\cX} = O(1)$.
\end{CLAIM}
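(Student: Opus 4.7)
The plan is to use a straightforward double counting argument on pairs $(F, X)$ with $F \in \cF_2$, $X \in \binom{F}{t}$, and $X \in \cX$. This should give the bound with no real obstacle, the whole point being that the lower bound on $\card{\cF_2(X)}$ for $X \in \cX$ matches (up to constants) the lower bound $\Omega(n^{k-t})$ on $\card{\cF_2(X_1)}$ from Claim \ref{clm:tpopset}, while $\card{\cF_2}$ itself is only $O(n^{k-t})$.

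More precisely, I would first observe the upper bound
\[ \sum_{X \in \binom{[n]}{t}} \card{\cF_2(X)} = \sum_{F \in \cF_2} \card{\binom{F}{t}} = \binom{k}{t} \card{\cF_2}, \]
since each $F \in \cF_2$ contains exactly $\binom{k}{t}$ $t$-subsets. From the preamble to the proposition we have $\card{\cF_2} = (r-p-1+\alpha)\binom{n-t}{k-t} = O(n^{k-t})$, so this sum is $O(n^{k-t})$.

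On the other hand, by the definition of $\cX$, each $X \in \cX$ contributes at least $\frac{1}{2\binom{k}{t}} \card{\cF_2(X_1)}$ to the left-hand side, so restricting the sum to $X \in \cX$ gives
\[ \card{\cX} \cdot \frac{1}{2\binom{k}{t}} \card{\cF_2(X_1)} \le \binom{k}{t} \card{\cF_2}. \]
Rearranging and using $\card{\cF_2(X_1)} = \Omega(n^{k-t})$ from Claim \ref{clm:tpopset} together with $\card{\cF_2} = O(n^{k-t})$, the factors of $n^{k-t}$ cancel and we conclude $\card{\cX} \le 2 \binom{k}{t}^2 \cdot \card{\cF_2}/\card{\cF_2(X_1)} = O(1)$, as required. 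The implicit constant depends only on $k, t, r, \ell$, which is all that is needed for the rest of the argument.

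There is no real difficulty here; the argument is essentially forced by the definition of $\cX$ and the sizes already established. The only mild subtlety is to remember that the implicit constants depend on $r, \ell$ (through the bound $\card{\cF_2} \le (r-1)\binom{n-t}{k-t}$), but since $r \le \ell$ is fixed this is harmless and can be absorbed into the function $n_2(k,t,\ell)$ used throughout this section.
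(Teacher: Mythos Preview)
Your proof is correct and takes essentially the same approach as the paper: the same double-counting identity $\sum_X \card{\cF_2(X)} = \binom{k}{t}\card{\cF_2}$, the same lower bound from the definition of $\cX$, and the same comparison of $\card{\cF_2} = O(n^{k-t})$ against $\card{\cF_2(X_1)} = \Omega(n^{k-t})$. The paper simply compresses the argument into a single chain of inequalities.
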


\begin{proof}
We have
\[ \binom{k}{t} \card{\cF_2} = \sum_{F \in \cF_2} \card{ \binom{F}{t} } = \sum_{X \in \binom{[n]}{t}} \card{ \cF_2(X) } \ge \sum_{X \in \cX} \card{ \cF_2(X) } \ge \frac{1}{2 \binom{k}{t} } \card{\cF_2(X_1)} \card{\cX}. \]

Since $\card{\cF_2} = O(n^{k-t})$ and $\card{\cF_2(X_1)} = \Omega(n^{k-t})$, it follows that $\card{\cX} = O(1)$, as claimed.
\end{proof}

Hence we can write $\cX = \{X_1, X_2, \hdots, X_m\}$, where $m = O(1)$.  Note that there are at most $\binom{n-t-1}{k-t-1} = o(n^{k-t})$ sets in common between any two stars, 
while the number of sets each $t$-star contains is at 
least $\frac{1}{2 \binom{k}{t}} \card{\cF_2(X_1)}= \Omega(n^{k-t})$.  Thus in what follows, we consider only those sets in exactly one $t$-star $\cF_2(X_i)$, and shall only 
lose $o(n^{2(k-t)})$ $t$-intersecting pairs.

\begin{CLAIM} \label{clm:tequalstars}
For all $1 \le i < j \le m$, $\card{\cF_2(X_i)} = \card{\cF_2(X_j)} + o(n^{k-t})$.
\end{CLAIM}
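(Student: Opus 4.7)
The plan is to use a shifting argument analogous to those in Claims \ref{clm:almostequal} and \ref{clm:qalmostequal}. By symmetry it suffices to show the one-sided bound $\card{\cF_2(X_i)} \ge \card{\cF_2(X_j)} - o(n^{k-t})$ for any distinct $i, j \in [m]$, since swapping the roles of $i$ and $j$ then yields the matching inequality.

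First I would select a ``pure'' representative $F \in \cF_2(X_i)$, meaning $F$ contains $X_i$ but avoids every other cover element, i.e., $F \not\supseteq X_{j'}$ for all $j' \ne i$. Such a set exists because the number of $k$-sets containing both $X_i$ and any other fixed $t$-set is $O(n^{k-t-1})$, so at most $m \cdot O(n^{k-t-1}) = o(n^{k-t})$ members of $\cF_2(X_i)$ can be contaminated, whereas $\card{\cF_2(X_i)} = \Omega(n^{k-t})$ by the construction of $\cX$. For this $F$ I would then bound $\tint(F, \cF)$ by splitting $\cF$ into its constituent $t$-stars. Sets in $\cF_2(X_i)$ contribute at most $\card{\cF_2(X_i)}$; sets in each $\cF_2(X_{j'})$ with $j' \ne i$ contribute $o(n^{k-t})$ by the heuristic bound \eqref{eqn:heuristic}, since $F \not\supseteq X_{j'}$; and sets in each of the $p = O(1)$ full $t$-stars comprising $\cF_1$ contribute $o(n^{k-t})$ for the same reason. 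Since $\cX$ covers $\cF_2$, these cases exhaust $\cF$, yielding $\tint(F, \cF) \le \card{\cF_2(X_i)} + o(n^{k-t})$.

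Next I would perform the swap: replace $F$ with a $k$-set $G$ satisfying $X_j \subseteq G$ and $G \notin \cF$. Such a $G$ exists because $\cF(X_j)$ is not a full $t$-star -- otherwise $\cF_1$, defined as the union of the full $t$-stars of $\cF$, would already absorb the $t$-star with center $X_j$, forcing $\cF_2(X_j) = \emptyset$ and contradicting $\card{\cF_2(X_j)} = \Omega(n^{k-t})$. Since every set of $\cF$ containing $X_j$ is automatically $t$-intersecting with $G$, we have $\tint(G, \cF) \ge \card{\cF(X_j)} \ge \card{\cF_2(X_j)}$. The optimality of $\cF$ then gives $\tint(F, \cF \setminus \{F\}) \ge \tint(G, \cF \setminus \{F\})$, and absorbing the $O(1)$ corrections for the self-pair $\{F,F\}$ and the possible pair $\{F,G\}$ into the error term yields $\card{\cF_2(X_i)} + o(n^{k-t}) \ge \card{\cF_2(X_j)}$, as required.

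The main obstacle is bookkeeping rather than conceptual: one must verify that all cross-terms coming from pairwise intersections among the $X_{j'}$ and from the full stars of $\cF_1$ really do sum to $o(n^{k-t})$, which is routine given $m, p = O(1)$ and the a priori estimate \eqref{eqn:heuristic}. Beyond that, the only point requiring genuine care is checking that the swap is legal -- that $\cF(X_j)$ is not itself a full $t$-star -- and this is handled as above using the definition of $\cF_1$.
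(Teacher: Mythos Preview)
Your proposal is correct and follows essentially the same shifting argument as the paper: pick a set $F$ in $\cF_2(X_i)$ lying in only one $t$-star, bound $\tint(F,\cF)$ by $\card{\cF_2(X_i)} + o(n^{k-t})$ via \eqref{eqn:heuristic}, swap $F$ for a set $G \supseteq X_j$ (legal since $\cF(X_j)$ cannot be full), and invoke optimality. Your treatment of the swap's legality and of the contribution from $\cF_1$ is in fact slightly more explicit than the paper's, but the underlying method is identical.
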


\begin{proof}
Consider a set $F \in \cF_2(X_i)$.  $F$ is $t$-intersecting with all sets in $\cF_2(X_i)$, and, by \eqref{eqn:heuristic}, $t$-disjoint from almost all other sets.  Thus $\tint(F,\cF_2) = \card{\cF_2(X_i)} + o(n^{k-t})$.  If we were instead to replace $F$ with a set $G$ containing $X_j$, which is possible as $\cF_2(X_j)$ is not a full $t$-star, then we would create at least $\card{\cF_2(X_j)}$ new $t$-intersecting pairs.  Since $\cF$ is optimal, we must have $\card{\cF_2(X_i)} + o(n^{k-t}) \ge \card{\cF_2(X_j)}$.

By symmetry, it follows that $\card{\cF_2(X_i)} = \card{\cF_2(X_j)} + o(n^{k-t})$.
\end{proof}

Recall that we had $\card{\cF_2} = (r - p - 1 + \alpha) \binom{n-t}{k-t} + o(n^{k-t})$.  By Claim \ref{clm:tequalstars}, it follows that these sets are almost equally distributed between the $m$ $t$-stars in the $t$-cover $\cX$, and so $\card{\cF_2(X_i)} = \frac{r - p - 1 + \alpha}{m} \binom{n-t}{k-t} + o(n^{k-t})$ for each $1 \le i \le m$.  Moreover, since $m$ $t$-stars can have at most $m \binom{n-t}{k-t}$ sets, we must have $m \ge r-p-1$ if $\alpha = o(1)$, or $m \ge r - p$ if $\alpha = \Omega(1)$.

\medskip

We can now estimate $\tint(\cF_2)$.  We know every set belonging only to the $t$-star $\cF_2(X_i)$ contributes $\card{\cF_2(X_i)} + o(n^{k-t})$ $t$-intersecting pairs, while there are only $o(n^{2(k-t)})$ $t$-intersecting pairs from sets in multiple $t$-stars.  Thus
\begin{align*}
	\tint(\cF_2) &= \frac12 \sum_{F \in \cF_2} \tint(F, \cF_2) + \frac12 \card{\cF} = \frac12 \sum_{i = 1}^m \sum_{F \in \cF_2(X_i)} \tint(F,\cF_2) + o(n^{2(k-t)}) \\
	&= \frac12 \sum_{i=1}^m \card{\cF_2(X_i)} \left( \card{\cF_2(X_i)} + o(n^{k-t}) \right) + o(n^{2(k-t)}) \\
	&= \frac12 \sum_{i=1}^m \card{\cF_2(X_i)}^2 + o(n^{2(k-t)}) \\
	&= \frac{ (r - p - 1 + \alpha)^2 }{2m} \binom{n-t}{k-t}^2 + o(n^{2(k-t)})
\end{align*}

On the other hand, we had the bound
\[ \tint(\cF_2) \ge \frac{ r - p - 1 + \alpha^2}{2} \binom{n-t}{k-t}^2 + o(n^{2(k-t)}). \]

Comparing the two, we must have
\begin{equation} \label{ineq:tcompare}
	\frac{(r-p-1+\alpha)^2}{2m} \ge \frac{r-p-1+\alpha^2}{2} + o(1).
\end{equation}

Note that we can write $\frac{r-p-1+\alpha^2}{2} = \frac12 \sum_{i=1}^m x_i^2$, where
\[ x_i = \left\{ \begin{array}{ll}
	1 & 1 \le i \le r - p - 1 \\
	\alpha & i = r - p \\
	0 & r - p + 1 \le i \le m
\end{array} \right. . \]

Let $\overline{x} = \frac{1}{m} \sum_{i=1}^m x_i = \frac{r-p-1 + \alpha}{m}$.  With this definition, we then have $\frac{(r-p -1+\alpha)^2}{2m} = \frac12 m \overline{x}^2$.  Since
\[ \sum_{i=1}^m x_i^2 = m\overline{x}^2 + \sum_{i=1}^m \left( x_i - \overline{x} \right)^2, \]
for \eqref{ineq:tcompare} to hold, we must have $\sum_{i=1}^m (x_i - \overline{x})^2 = o(1)$, and thus $x_i = \overline{x} + o(1)$ for all $1 \le i \le m$.

Since $x_1 = 1$, $x_{r-p} = \alpha$, and $x_{r-p+1} = 0$, we must have $m \le r-p$.  Recalling our earlier bound $m \ge r - p - 1$, there are only two possibilities.  We could have $m = r-p$ and $\alpha = 1 - o(1)$.  In this case, each of the $r-p$ $t$-stars in $\cF_2$ has size $\frac{r-1-p+\alpha}{m} \binom{n-t}{k-t} + o(n^{k-t}) = (1 - o(1)) \binom{n-t}{k-t}$.  Combined with the $p$ full $t$-stars in $\cF_1$, we see that $\cF$ consists of $r$ almost full $t$-stars, and so we are in case $(ii)$.

The other possible solution is to have $m = r-p-1$, with $\alpha = o(1)$.  This implies $\cF_2$ consists of $r-1-p$ almost full $t$-stars, which, combined with the $p$ full $t$-stars of $\cF_1$, means $\cF$ falls under case $(iii)$.  This completes the proof of Proposition \ref{prop:tstars}.
\end{proof}

We complete this section by proving the three lemmas.  First we show that unions of lexicographical stars contain the fewest sets.

\begin{proof}[Proof of Lemma \ref{lem:lexsmall}]
Note that the first $r$ $t$-stars in the lexicographical ordering have centers $Y_i = \{1,2,\hdots,t-1,t+i-1\}$, $1 \le i \le r$, and their union has size $s = \binom{n-t+1}{k-t+1} - \binom{n-t-r+1}{k-t+1}$.  Letting $\cL = \cL_{n,k}\left( \binom{n-t+1}{k-t+1} - \binom{n-t-r+1}{k-t+1} \right)$, note that for any set $I \subset [r]$, since $|\cup_{i \in I} Y_i| = t + |I| -1$, we have $| \cap_{i \in I} \cL(Y_i) | = \binom{n-t-|I|+1}{k - t - |I| + 1}$.  Thus, by Inclusion-Exclusion,
\begin{align*}
 |\cL| = |\cup_{i=1}^r \cL(Y_i)| &= \sum_i |\cL(Y_i)| - \sum_{i_1 < i_2} |\cL(Y_{i_1}) \cap \cL(Y_{i_2})| + O(n^{k-t-2}) \\
  &= r \binom{n-t}{k-t} - \binom{r}{2} \binom{n-t-1}{k-t-1} + O(n^{k-t-2}).
\end{align*}

Now we consider the size of $\cF$.  Suppose $\cF$ is the union of the $r$ full $t$-stars with centers $\{X_1, \hdots, X_r \}$.  We have
\[ |\cF| = |\cup_{i=1}^r \cF(X_i)| \ge \sum_{i=1}^r |\cF(X_i)| - \sum_{i_1 < i_2} | \cF(X_{i_1}) \cap \cF(X_{i_2})| = r\binom{n-t}{k-t} - \sum_{i_1 < i_2} | \cF(X_{i_1}) \cap \cF(X_{i_2})|. \]

For every $i_1 < i_2$ we have $|\cF(X_{i_1}) \cap \cF(X_{i_2})| = \binom{n-|X_{i_1} \cup X_{i_2}|}{k - |X_{i_1} \cup X_{i_2}|}$.  If $|X_{i_1} \cap X_{i_2}| \le t-2$, then $\card{X_{i_1} \cup X_{i_2}} \ge t + 2$.  Hence $|\cF(X_{i_1}) \cap \cF(X_{i_2})| = O(n^{k-t-2})$, and so
\[ |\cF| \ge r\binom{n-t}{k-t} - \left( \binom{r}{2} - 1 \right) \binom{n-t-1}{k-t-1} + O(n^{k-t-2}) > |\cL|. \]
Hence we must have $|X_{i_1} \cap X_{i_2}| = t-1$ for all $i_1 < i_2$.

Now, by Inclusion-Exclusion, we have
\[ |\cF| - r\binom{n-t}{k-t} + \binom{r}{2} \binom{n-t-1}{k-t-1} = \sum_{\substack{I \subset [r] \\ |I| \ge 3}} (-1)^{|I|+1} | \cap_{i \in I} \cF(X_i) |. \]

For any set $F$ containing $a \ge 3$ sets $X_i$, the contribution to the right-hand side is 
\[ \sum_{b=3}^a (-1)^{b+1} \binom{a}{b} = (1-1)^a + 1 - a + \binom{a}{2} = 1 - a + \binom{a}{2} \ge 1. \]
If we have some $i_1 < i_2 < i_3$ with $|X_{i_1} \cup X_{i_2} \cup X_{i_3}| = t+1$, then we would have $\binom{n-t-1}{k-t-1}$ sets containing $X_{i_1}$, $X_{i_2}$ and $X_{i_3}$.  By the preceding equation, we then have
\[ |\cF| \ge r\binom{n-t}{k-t} - \binom{r}{2} \binom{n-t-1}{k-t-1} + \binom{n-t-1}{k-t-1} > |\cL|. \]

Hence we may assume $|X_{i_1} \cup X_{i_2} \cup X_{i_3}| \ge t+2$ for all $i_1 < i_2 < i_3$.  Since we must have $|X_{i_1} \cap X_{i_2}| = t-1$ for all $i_1 < i_2$, this implies all of the sets $X_i$ share a common $(t-1)$-set, and hence $\cF$ is isomorphic to $\cL$, as desired.
\end{proof}

The next lemma showed that when adding a set to $r$ full $t$-stars, the lexicographical stars minimize the number of new $t$-disjoint pairs.

\begin{proof}[Proof of Lemma \ref{lem:addset}]
$\cL$ is the union of the $t$-stars with centers $\{Y_1, Y_2, \hdots, Y_r\}$, as in Lemma \ref{lem:lexsmall}.  Since all these sets, and $L$, contain $[t-1]$, it is easy to see that
\begin{align*}
 \disj_t(L,\cL) &\le \sum_{i=1}^r \disj_t(L,\cL(Y_i)) - \sum_{i_1 < i_2} \disj_t(L, \cL(Y_{i_1}) \cap \cL(Y_{i_2})) \\
    & \quad + \sum_{i_1 < i_2 < i_3} \disj_t(L,\cL(Y_{i_1}) \cap \cL(Y_{i_2}) \cap \cL(Y_{i_3})) \\
    &= r\binom{n-k-1}{k-t} - \binom{r}{2} \binom{n-k-2}{k-t-1} + O(n^{k-t-2}).
\end{align*}

On the other hand, we have
\[ \disj_t(F, \cF) \ge \sum_{i=1}^r \disj_t(F, \cF(X_i)) - \sum_{i_1 < i_2} \disj_t(F, \cF(X_{i_1}) \cap \cF(X_{i_2})). \]

The first term can be evaluated as follows.  Since
\[ \disj_t(F,\cF(X_i)) = \sum_{a=0}^{t-1-|F \cap X_i|} \binom{k-|F \cap X_i|}{a} \binom{n - k - t + |F \cap X_i|}{k - t - a}, \]
if $|F \cap X_i| = t-1$ we have $\disj_t(F, \cF(X_i)) = \binom{n-k-1}{k-t}$, while $\disj_t(F, \cF(X_i)) \ge \binom{n-k-2}{k-t} + (k-t+2)\binom{n-k-2}{k-t-1} = \binom{n-k-1}{k-t} + (k-t+1)\binom{n-k-2}{k-t-1}$ otherwise.  Moreover, for every $i_1 < i_2$ we have the bound $\disj_t(F, \cF(X_{i_1}) \cap \cF(X_{i_2})) \le | \cF(X_{i_1}) \cap \cF(X_{i_2})| \le \binom{n-t-1}{k-t-1}$.  Hence, if $|F \cap X_i| \le t-2$ for some $i$,
\begin{align*}
 \disj_t(F, \cF) &\ge r\binom{n-k-1}{k-t} + (k-t+1)\binom{n-k-2}{k-t-1} - \binom{r}{2} \binom{n-t-1}{k-t-1} \\
 &= r \binom{n-k-1}{k-t} - \left( \binom{r}{2} - (k-t+1) \right) \binom{n-k-2}{k-t-1} + O(n^{k-t-2}) > \disj_t(L,\cL).
\end{align*}

Thus we may assume $|F \cap X_i| = t-1$ for all $i$.  Given this condition, it follows that
\[ \disj_t(F,\cF(X_{i_1}) \cap \cF(X_{i_2})) = \left\{ \begin{array}{cl}
    \binom{n-k-1}{k-t-1} & \textrm{if } F \cap X_{i_1} = F \cap X_{i_2} \\
    0 & \textrm{otherwise, since } \card{ F \cap \left( X_{i_1} \cup X_{i_2} \right) } \ge t
    \end{array} \right. . \]

Hence, in order to have $\disj_t(F,\cF) \le \disj_t(L, \cL) = r\binom{n-k-1}{k-t} - \binom{r}{2} \binom{n-k-2}{k-t-1} + O(n^{k-t-2})$, we must have $F \cap X_{i_1} = F \cap X_{i_2}$ for all $i_1 < i_2$.  This implies that $F$ shares a common $(t-1)$-set with all the sets $X_i$, and thus $\cF \cup \{F\}$ is isomorphic to $\cL \cup \{L\}$, as required.
\end{proof}

The final lemma showed that the union of any $r$ full $t$-stars contains at least as many disjoint pairs as the initial segment of the lexicographical ordering with the same number of sets.

\begin{proof}[Proof of Lemma \ref{lem:fullstars}]
We shall find it more convenient to count the number of $t$-intersecting pairs.  Suppose $\cF$ is the union of the full $t$-stars with centers $\{ X_1, X_2, \hdots, X_r \} \subset \binom{[n]}{t}$.  By Lemma \ref{lem:lexsmall}, it follows that $\cL = \cL_{n,k}(\card{\cF})$ consists of the full $t$-stars with centers $\{Y_1, Y_2, \hdots, Y_r \}$, possibly with some additional sets in an $(r+1)$st $t$-star with center $Y_{r+1}$, where $Y_i = \{1,2, \hdots, t-1, t-1+i\}$.  Note that in this setting we have $\card{\cF} = \card{\cL}$.

\medskip

We first show that if $\card{X_i \cap X_j} \le t-2$ for some $1 \le i < j \le r$, then the $r$ full $t$-stars of $\cL$ alone contain more $t$-intersecting pairs than $\cF$.  We have
\begin{align} \label{ineq:tintbound}
	\tint(\cF) &\le \sum_{i=1}^r \tint(\cF(X_i)) + \sum_{i < j} \tint(\cF(X_i) \setminus \cF(X_j), \cF(X_j) \setminus \cF(X_i)) \notag \\
	&= r \binom{ \binom{n-t}{k-t} }{2} + r \binom{n-t}{k-t} + \sum_{i < j} \tint(\cF(X_i) \setminus \cF(X_j), \cF(X_j) \setminus \cF(X_i)),
\end{align}
where the inequality is due to the fact that $t$-intersecting pairs involving sets in multiple $t$-stars are overcounted.

\medskip

First suppose $\card{X_i \cap X_j} = t-1$.  Given a set $F \in \cF(X_i) \setminus \cF(X_j)$, we wish to bound how many sets $G \in \cF(X_j) \setminus \cF(X_i)$ can be $t$-intersecting with $F$.  Since $X_i \cap X_j \subset F \cap G$, we require $G$ to contain one additional element of $F$.  However, this element cannot be from $X_i$, as then we would have $G \in \cF(X_i)$.  Thus there are $k-t$ choices for this additional element.  Given that $G$ already contains $X_j$, there are $\binom{n-t-1}{k-t-1}$ ways to choose the remaining elements of $G$.  Hence there can be at most $(k-t) \binom{n-t-1}{k-t-1}$ such sets $G$, giving
\[ \tint(\cF(X_i) \setminus \cF(X_j), \cF(X_j) \setminus \cF(X_i)) \le (k-t) \binom{n-t-1}{k-t-1} \card{ \cF(X_i) \setminus \cF(X_j) } \le (k-t) \binom{n-t}{k-t} \binom{n-t-1}{k-t-1}. \]

Now suppose $\card{X_i \cap X_j} \le t-2$.  There are two types of $F \in \cF(X_i) \setminus \cF(X_j)$: those with $F \cap X_j = X_i \cap X_j$, and those with $(F \setminus X_i) \cap X_j \neq \emptyset$.  In the first case, note that for $G \in \cF(X_j) \setminus \cF(X_i)$ to be $t$-intersecting with $F$, $G$ must contain at least $2$ elements from $F$ in addition to $X_j$.  Hence there are at most $\binom{k}{2} \binom{n-t-2}{k-t-2}$ such sets.  In the second case, note that there are at most $t \binom{n-t-1}{k-t-1}$ such sets $F$, as we can choose at most $t$ elements from $X_j \setminus X_i$ for $F$ to contain, and then there are $\binom{n-t-1}{k-t-1}$ ways to choose the remaining elements for $F$.  For each such $F$, in order for $G \in \cF(X_j) \setminus \cF(X_i)$ to be $t$-intersecting with $F$, $G$ must contain some element of $F$ in addition to $F \cap X_j$.  There are at most $k$ choices for this element, with $\binom{n-t-1}{k-t-1}$ ways to complete $G$.  Thus there are at most $kt \binom{n-t-1}{k-t-1}^2$ $t$-intersections of this type.  Hence we have
\[ \tint(\cF(X_i) \setminus \cF(X_j) , \cF(X_j) \setminus \cF(X_i)) \le \binom{k}{2} \binom{n-t}{k-t} \binom{n-t-2}{k-t-2} + kt \binom{n-t-1}{k-t-1}^2 = O(n^{2k - 2t - 2}). \]

Substituting these bounds into \eqref{ineq:tintbound}, if we have $d$ pairs $\{ i, j \}$ with $\card{X_i \cap X_j} \le t-2$, we have
\begin{equation} \label{ineq:tintbound2}
	\tint(\cF) \le r \binom{\binom{n-t}{k-t} }{2} + \left( \binom{r}{2} -d \right) (k-t) \binom{n-t}{k-t} \binom{n-t-1}{k-t-1} + O(n^{2k-2t-2}).
\end{equation}

We now provide a lower bound for $\tint(\cL)$, considering only the $r$ full $t$-stars.  There are two types of $t$-intersecting pairs: those from within a single $t$-star, and those between two $t$-stars.

\medskip

For the first kind, note that there are $r$ $t$-stars, with $\binom{ \binom{n-t}{k-t} }{2} + \binom{n-t}{k-t}$ $t$-intersecting pairs in each.  However, this overcounts those pairs that are contained in the intersection of multiple $t$-stars.  Any two $t$-stars in $\cL$ share $\binom{n-t-1}{k-t-1}$ sets, and so we overcount at most $\binom{r}{2} \left[ \binom{ \binom{n-t-1}{k-t-1} }{2} + \binom{n-t-1}{k-t-1} \right]$ pairs.  Hence the number of $t$-intersecting pairs within $t$-stars is at least
\[ r \left[ \binom{ \binom{n-t}{k-t} }{2} + \binom{n-t}{k-t} \right] - \binom{r}{2} \left[ \binom{ \binom{n-t-1}{k-t-1} }{2} + \binom{n-t-1}{k-t-1} \right]= r \binom{ \binom{n-t}{k-t} }{2} + O(n^{2k-2t-2}). \]

For the second kind, we wish to avoid counting those pairs that already appear within some star.  This can only happen if one of the sets is in multiple $t$-stars, and so we define $\cL^-(Y_i)$ to be those sets in $\cL$ that are only in the $t$-star $\cL(Y_i)$.  The number of such sets is $\binom{n-t-r+1}{k-t} = \binom{n-t}{k-t} + O(n^{k-t-1})$, as they must contain $Y_i$, and avoid the $r-1$ elements in $\cup_j Y_j \setminus Y_i$.

Since $Y_i \cap Y_j = [t-1]$, a set $G \in \cL^-(Y_j)$ is $t$-intersecting with a set $F \in \cL^-(Y_i)$ if it contains an element in $F \setminus Y_i$.  To avoid double-counting, we shall only consider those $G \in \cL^-(Y_j)$ that meet $F \setminus Y_i$ exactly once.  Moreover, since we require $G \in \cL^-(Y_j)$, $G$ should avoid the $r-1$ elements in $\cup_a Y_a \setminus Y_j$.  This gives $\tint(F, \cL^-(Y_j)) \ge (k-t) \binom{n-k-r}{k-t-1}$.  Hence we obtain
\begin{align*}
	\tint(\cL^*(Y_i), \cL^*(Y_j)) &\ge (k-t) \binom{n-t-r+1}{k-t} \binom{n-k-r}{k-t-1} \\
	&= (k-t) \binom{n-t}{k-t} \binom{ n - t - 1} {k-t-1} + O(n^{2k - 2t - 2}).
\end{align*}

Thus we deduce
\[ \tint(\cL) \ge r \binom{ \binom{n-t}{k-t} }{2} + \binom{r}{2} (k-t) \binom{n-t}{k-t} \binom{n-t-1}{k-t-1} + O(n^{2k - 2t - 2}).\]

Comparing this to \eqref{ineq:tintbound2}, we find that unless $d = 0$, we must have $\tint(\cL) > \tint(\cF)$, as desired.  It remains to consider the case when $\card{X_i \cap X_j} = t-1$ for all $1 \le i < j \le r$.

\medskip

There are only two possibilities.  In the first, all the sets $X_i$ share $t-1$ elements in common, in which case $\cF$ is isomorphic to $\cL$.  The second case, up to isomorphism, is when $r \le t+1$, and $X_i \in \binom{[t+1]}{t}$.  Note that if $1 \le r \le 2$, the two constructions are isomorphic, so we may assume $r \ge 3$.

In this case, as we know the exact structure of both constructions, we are able to compute the number of intersecting pairs rather more precisely.  We begin with $\cF$, the union of $r$ full $t$-stars with centers from $\binom{[t+1]}{t}$.

\medskip

$\cF$ contains all $\binom{n-t-1}{k-t-1}$ sets containing $[t+1]$, and then $r \binom{n-t-1}{k-t}$ sets that meet $[t+1]$ in $t$ elements.  The sets containing $[t+1]$ are $t$-intersecting with all other sets in $\cF$.

On the other hand, if $F \in \cF(X_i)$ is such that $F \cap [t+1] = X_i$, then there are three types of sets in $\cF$ that can be $t$-intersecting with $F$:
\begin{itemize}
	\item[(i)] a set containing $[t+1]$,
	\item[(ii)] a set whose intersection with $[t+1]$ is precisely $X_i$, or
	\item[(iii)] a set whose intersection with $[t+1]$ is $X_j$ for some $j \neq i$.
\end{itemize}
There are $\binom{n-t-1}{k-t-1}$ sets of type $(i)$ and $\binom{n-t-1}{k-t}$ sets of type $(ii)$.  For a set to be of type $(iii)$, it must contain some $X_j$, not contain $X_i$, and then meet $F$ in some element of $F \setminus X_i$.  For each choice of $j$, the set should contain the $k$ elements of $X_j$, not the single element in $X_i \setminus X_j$, and should not avoid the remaining $k - t$ elements of $F$.  Hence there are $\binom{n-t-1}{k-t} - \binom{n-k-1}{k-t}$ such sets.

Putting this all together, we find
\begin{equation} \label{eqn:tclique}
	2 \tint(\cF) - \card{\cF} = \sum_{F \in \cF} \tint(F, \cF) = I_1 + I_2,
\end{equation}
where
\begin{align*}
I_1 &= \binom{n-t-1}{k-t-1} \left[ \binom{n-t-1}{k-t-1} + r \binom{n-t-1}{k-t} \right], \textrm{ and } \\
I_2 &= r \binom{n-t-1}{k-t} \left[ \binom{n-t-1}{k-t-1} + \binom{n-t-1}{k-t} + (r-1) \left( \binom{n-t-1}{k-t} - \binom{n-k-1}{k-t} \right) \right].
\end{align*}

We now turn our attention to $\cL$.

\medskip

First observe that we have $r$ full stars, with centers $\{Y_1, Y_2, \hdots, Y_r\}$.  The remaining sets fall into an $(r+1)$st star with center $Y_{r+1}$.  To avoid overcounting, we shall partition $\cL$ into the subsystems $\cL^*(i) = \{L \in \cL: \min (L \setminus [t-1]) = t-1+i \}$, $1 \le i \le r+1$; that is, $L \in \cL^*(i)$ if $\cL(Y_i)$ is the first $t$-star $L$ is in.

For $1 \le i \le r$, $\cL^*(i)$ consists of all sets containing $[t-1] \cup \{t-1+i\}$, but disjoint from the interval $[t,t-2+i]$.  Hence we have $\card{\cL^*(i)} = \binom{n-t-i+1}{k-t}$.  Summing up the telescoping binomial coefficients, we find the first $r$ $t$-stars contain $\binom{n-t+1}{k-t+1} - \binom{n-t-r+1}{k-t+1}$ sets.  $\cL^*(r+1)$ then contains enough sets to make $\cL$ the right size, and so
\[ \card{\cL^*(r+1)} = \card{\cF} - \card{\cup_{i=1}^r \cL^*(i)} = \left[ \binom{n-t-1}{k-t-1} + r\binom{n-t-1}{k-t} \right] - \left[ \binom{n-t+1}{k-t+1} - \binom{n-t-r+1}{k-t+1} \right]. \]

Note that all the subsystems $\cL^*(i)$ are $t$-intersecting.  Moreover, if $j < i$, and $L \in \cL^*(i)$, then for a set $K \in \cL^*(j)$ to be $t$-intersecting with $L$, it must contain $[t-1] \cup \{t-1+j\}$, be disjoint from the interval $[t,t-2+j]$, and contain one of the $k-t+1$ elements in $L \setminus [t-1+j]$.  Hence we have $\tint(L,\cL^*(j)) = \binom{n-t-j+1}{k-t} - \binom{n-k-j}{k-t}$.  We can now count the number of $t$-intersecting pairs in $\cL$:
\begin{align} \label{eqn:tlex}
	2 \tint(\cL) - \card{\cL} &= \sum_{L \in \cL} \tint(L, \cL) = \sum_{i=1}^{r+1} \left( \tint(\cL^*(i), \cL^*(i)) + 2 \sum_{j < i} \tint(\cL^*(i), \cL^*(j)) \right) \notag \\
	&= \sum_{i=1}^{r+1} \card{\cL^*(i)}^2 + 2 \sum_{j < i} \card{\cL^*(i)} \left[ \binom{n-t-j+1}{k-t} - \binom{n-k-j}{k-t} \right].
\end{align}

We now wish to show $\tint(\cL) \ge \tint(\cF)$; that is, to show the quantity in \eqref{eqn:tlex} is greater than that in \eqref{eqn:tclique}.  To make this task easier, we shall rewrite all products of binomial coefficients in the form $\binom{n-t}{k-t}^2, \binom{n-t}{k-t} \binom{n-t}{k-t-1},$ or $\binom{n-t}{k-t-1}^2$, using the identities
\begin{align*}
	\binom{m-a}{r} &= \binom{m}{r} - a \binom{m}{r-1} + \binom{a+1}{2} \binom{m}{r-2} + O(m^{r-3}) \textrm{ and} \\
	\binom{n-t}{k-t} \binom{n-t}{k-t-2} &= \frac{n-k+1}{n-k+2} \cdot \frac{k-t-1}{k-t} \cdot \binom{n-t}{k-t-1}^2 = \frac{k-t-1}{k-t} \binom{n-t}{k-t-1}^2 + O(n^{2k-2t-3}).
\end{align*}

After performing the routine but tedious calculations, we find, up to an error of $O(n^{2k - 2t - 3})$,
\begin{align*}
	2 \tint( \cF ) - \card{\cF} &= r \binom{n-t}{k-t}^2 + r(r-1)(k-t) \binom{n-t}{k-t} \binom{n-t}{k-t-1} \\
	&\quad - \left[ \frac12 r(r-1)(k-t)^2 + 2r(r-1)(k-t) - \left( \frac{3r}{2} - 1 \right) \left( r - 1 \right) \right] \binom{n-t}{k-t-1}^2, \\
\textrm{ and } 2 \tint( \cL ) - \card{\cL} &= r \binom{n-t}{k-t}^2 + r(r-1)(k-t) \binom{n-t}{k-t} \binom{n-t}{k-t-1} \\
	&\quad - \left[ \frac12 r(r-1)(k-t)^2 + 2r(r-1)(k-t) - \frac14 (r-1)^2 (r^2+4) \right] \binom{n-t}{k-t-1}^2.
\end{align*}

The coefficient of the leading term of the difference between the two constructions is thus
\begin{align*}
	\frac{2 \tint( \cL ) - 2 \tint( \cF )}{\binom{n-t}{k-t-1}^2} &= \left( \frac14 (r-1)^2 (r^2 + 4) - \left(\frac{3r}{2} - 1 \right)(r - 1) \right) \\
	&= \frac14 (r+1)r(r-1)(r-2) > 0
\end{align*}
for $r \ge 3$.  Hence we indeed find $\tint(\cL) > \tint(\cF)$, as required.
\end{proof}

\section{Concluding remarks and open problems} \label{sec:conclusion}

In this paper, we have provided a partial solution to a problem of Ahlswede on the minimum number of disjoint pairs in set systems.  For small systems, we verified Bollob\'as and Leader's conjecture by showing that the initial segment of the lexicographical ordering is optimal.  By considering the complementary set systems, this also resolves the problem for very large set systems.  However, it remains to determine which systems are optimal in between.

When $k=2$, Ahlswede and Katona showed that the optimal system was always either a union of stars or its complement.  For $k \ge 3$, Bollob\'as and Leader suggest a larger family of possible extremal systems.  We note that for systems of size $s = \frac12 \binom{n}{k}$, the lexicographical system is at least near-optimal.  A straightforward calculation shows $\disj(n,k,s) \le \disj(\cL_{n,k}(s)) \le \frac12 \left( 1 - \frac{2^{1 / k} k^2}{n} + O(n^{-2}) \right) s^2$.  On the other hand, exploiting the connection to the Kneser graph, we can use spectral techniques to obtain the bound $\disj(n,k,s) \ge \frac12 \left( 1 - \frac{k(k+2)}{n} + O(n^{-2}) \right) s^2$.

\medskip

While our focus has been showing that a system with more than $\binom{n-1}{k-1}$ sets must contain many disjoint pairs, a closely related problem is to determine whether such a system must have any sets disjoint from many other sets.  This type of question has been studied before in other settings.  For example, when one is considering the number of triangles in a graph, Erd\H{o}s showed in \cite{erdos62a} that any graph with $\floor{ \frac{n^2}{4} } + 1$ edges must contain an edge in at least $\frac{n}{6} + o(n)$ triangles. It is well-known and easy to see that the hypercube, a graph whose vertices are subsets of $[n]$, with two vertices adjacent if they are comparable and differ in exactly one element, has independence number $2^{n-1}$.  In \cite{chung88}, it is proved that any induced subgraph on $2^{n-1} + 1$ vertices contains a vertex of degree at least $(\frac12 + o(1)) \log_2 n$.  It is an open problem to determine whether or not this bound is tight (the corresponding upper bound is $O(\sqrt{n})$), and the answer to this question has ramifications in theoretical computer science.

In the context of the Erd\H{o}s-Ko-Rado Theorem, it is trivial to show that in a system of $\binom{n-1}{k-1} + 1$ sets, there must be a set disjoint from at least $\frac12 \left( 1 - \frac{k^3}{n} \right) \binom{n-1}{k-1}$ other sets.  Indeed, by the Erd\H{o}s-Ko-Rado theorem, there exists a pair $F_1, F_2$ of disjoint sets.  At most $k^2 \binom{n-2}{k-2} < \frac{k^3}{n} \binom{n-1}{k-1}$ sets can intersect both $F_1$ and $F_2$, and so either $F_1$ or $F_2$ must be disjoint from at least half of the remaining sets, resulting in the above bound.  Furthermore, this is easily seen to be asymptotically tight, as one may take all sets containing $\{1,2\}$, and then take half the remaining sets to contain $1$, and half to contain $2$.  It may be of interest to obtain sharper estimates for this problem, especially as the aforementioned construction shows that this is closely related to the original problem when $s \approx \frac12 \binom{n}{k}$, since one should choose the sets containing $1$ or $2$ optimally.

\medskip

We find most exciting the prospect of studying Erd\H{o}s-Rademacher-type problems in other settings.  In an earlier paper, we presented an Erd\H{o}s-Rademacher-type strengthening of Sperner's Theorem, a problem that was also studied in \cite{dove13}.  However, as one can investigate similar extensions for any extremal result, there is truly no end to the number of directions in which this project can be continued.  We hope that further work of this nature will lead to many interesting results and a greater understanding of classical theorems in extremal combinatorics.


\begin{thebibliography}{99}

\bibitem{ahlswede80}
R.~Ahlswede,
\newblock{Simple hypergraphs with maximal number of adjacent pairs of edges},
\newblock{\em J. Combinatorial Theory (B)} {\bf 28} (1980), 164 -- 167.

\bibitem{ahlcai99}
R.~Ahlswede and N.~Cai,
\newblock{A counterexample to Kleitman's conjecture concerning an edge-isoperimetric problem},
\newblock{\em Combinatorics, Probability and Computing} {\bf 8.4} (1999), 301 -- 305.

\bibitem{ahlkat78}
R.~Ahlswede and G.~O.~H.~Katona,
\newblock{Graphs with maximal number of adjacent pairs of edges},
\newblock{\em Acta Mathematica Hungarica} {\bf 32.1} (1978), 97 -- 120.

\bibitem{ahlkha97}
R.~Ahlswede and L.~H.~Khachatrian,
\newblock{The complete intersection theorem for systems of finite sets},
\newblock{\em European Journal of Combinatorics} {\bf 18.2} (1997), 125 -- 136.

\bibitem{anderson}
I.~Anderson,
\newblock{\bf Combinatorics of Finite Sets},
\newblock{Courier Dover Publications}, 1987.

\bibitem{bollead03}
B.~Bollob\'as and I.~Leader,
\newblock{Set systems with few disjoint pairs},
\newblock{\em Combinatorica} {\bf 23.4} (2003), 559 -- 570.

\bibitem{chung88}
F.~R.~K.~Chung, Z.~F\"uredi, R.~L.~Graham and P.~Seymour,
\newblock{On induced subgraphs of the cube},
\newblock{\em J. Comb. Theory Ser. A} {\bf 49} (1988), 180 -- 187.

\bibitem{dove13}
A.~P.~Dove, J.~R.~Griggs, R.~J.~Kang and J.~S.~Sereni,
\newblock{Supersaturation in the Boolean lattice},
arxiv.org:1303.4336 (2013).

\bibitem{erdos62a}
P.~Erd\H{o}s,
\newblock{On a theorem of Rademacher-Tur\'an},
\newblock{\em Illinois Journal of Math} {\bf 6} (1962), 122 -- 127.

\bibitem{erdos62b}
P.~Erd\H{o}s,
\newblock{On the number of complete subgraphs contained in certain graphs},
\newblock{\em Magy. Tud. Acad. Mat. Kut. Int. K\"ozl.} {\bf 7} (1962), 459 -- 474.

\bibitem{erdos65}
P.~Erd\H{o}s,
\newblock{A problem on independent $r$-tuples},
\newblock{\em Ann. Univ. Sci. Budapest. E\"otv\"os Sect. Math.} {\bf 8} (1965), 93 -- 95.

\bibitem{ekr61}
P.~Erd\H{o}s, C.~Ko and R.~Rado,
\newblock{Intersection theorems for systems of finite sets},
\newblock{\em The Quarterly Journal of Mathematics} {\bf 12.1} (1961), 313 -- 320.

\bibitem{frankl77}
P.~Frankl,
\newblock{On the minimum number of disjoint pairs in a family of finite sets},
\newblock{\em J. Combinatorial Theory (A)} {\bf 22} (1977), 249 -- 251.

\bibitem{frankl13}
P.~Frankl,
\newblock{Improved bounds for Erd\H{o}s' Matching Conjecture},
\newblock{\em J. Combinatorial Theory (A)} {\bf 120} (2013), 1068 -- 1072.

\bibitem{frr11}
P.~Frankl, V.~R\"odl and A.~Ruci\'nski,
\newblock{On the maximum number of edges in a triple system not containing a disjoint family of a given size},
\newblock{\em Combinatorics, Probability and Computing} {\bf 21} (2012), 141 -- 148.

\bibitem{harper91}
L.~H.~Harper,
\newblock{On a problem of Kleitman and West},
\newblock{\em Discrete Mathematics} {\bf 93.2} (1991), 169 -- 182.

\bibitem{hls12}
H.~Huang, P.~Loh and B.~Sudakov,
\newblock{The size of a hypergraph and its matching number},
\newblock{\em Combinatorics, Probability and Computing} {\bf 21} (2012), 442 -- 450.

\bibitem{katona12}
G.~O.~H.~Katona, G.~Y.~Katona and Z.~Katona,
\newblock{Most probably intersecting families of subsets},
\newblock{\em Combinatorics, Probability and Computing} {\bf 21} (2012), 219--227.

\bibitem{lm12}
T.~{\L}uczak and K.~Mieczkowska,
\newblock{On Erd\H{o}s' extremal problem on matchings in hypergraphs},
\newblock{arXiv: 1202.4196} (2012).

\bibitem{mantel07}
W.~Mantel,
\newblock{Problem 28},
\newblock{\em Winkundige Opgaven} {\bf 10} (1907), 60 -- 61.

\bibitem{poltuz87}
S.~Poljak and Z.~Tuza,
\newblock{Maximum bipartite subgraphs of Kneser graphs},
\newblock{\em Graphs and Combinatorics} {\bf 3} (1987), 191 -- 199.

\bibitem{wilson84}
R.~M.~Wilson,
\newblock{The exact bound on the Erd\H{o}s-Ko-Rado Theorem},
\newblock{\em Combinatorica} {\bf 4} (1984), 247 -- 257.

\end{thebibliography}
\end{document}